\numberwithin{equation}{section}
\theoremstyle{plain}
\newtheorem{theorem}{Theorem}
\newtheorem{corollary}{Corollary}
\newtheorem{lemma}{Lemma}
\def\R{\mathbb{R}} % real numbers
\def\Ep{{\rm E}}
\def\En{{\mathbb{E}_n}}
\def\Gn{{\mathbb{G}_n}}
\def\P{{\rm P}}
\newtheorem{algorithm}{Algorithm}
\renewcommand{\Pr}[1]{\mathbb{P}\left(#1\right)} % \Pr{abc} prints P(abc) with appropriately sized parentheses
\def\e{\varepsilon}
\def\tr{{\rm Tr}}
\begin{document}

\begin{frontmatter}
\title{Confidence Bands for Coefficients in High Dimensional Linear Models with Error-in-variables}%\protect\thanksref{T1}}
\runtitle{Inference in Error-in-variables}
\thankstext{T1}{First Version: 4/10/2016; Current Version: \today. We are indebted to Mathieu Rosenbaum and Alexandre B. Tsybakov for several discussions and suggestions on this work. We are also thankful to participants of seminars at Wharton, USC Marshall, CREST and UC Davis for comments.}

\begin{aug}
\author{\fnms{Alexandre} \snm{Belloni}\ead[label=e1]{abn5@duke.edu}},
%\and
\author{\fnms{Victor} \snm{Chernozhukov}\ead[label=e2]{vchern@mit.edu}}\\ 
\and
\author{\fnms{Abhishek} \snm{Kaul}\ead[label=e3]{abhishek.kaul@nih.gov}}

%\thankstext{t1}{}
%%\thankstext{t2}{}
%%\thankstext{t3}{}
%\runauthor{Belloni, Chernozhukov, Kaul, Rosenbaum and Tsybakov}

%\affiliation{Duke University, MIT,  and IMPA }

\address{The Fuqua School of Business\\
Duke University\\
100 Fuqua Drive\\
Durham, NC 27708\\
\printead{e1}\\
}

\address{Department of Economics\\ Massachusetts Institute of Technology\\
52 Memorial Drive\\
Cambridge, MA 02139\\
\printead{e2}\\
}

\address{Biostatistics \& Computational \\ Biology Branch\\
NIEHS\\
P.O. Box 12233\\
Mail Drop A3-03 \\
Research Triangle Park\\
 NC 27709 \\
\printead{e3}\\
}

\end{aug}

%%%%%%
\begin{abstract}
We study high-dimensional linear models with error-in-variables. Such models are motivated by various applications in econometrics, finance and genetics. These models are challenging because of the need to account for measurement errors to avoid non-vanishing biases in addition to handle the high dimensionality of the parameters. A recent growing literature has proposed various estimators that achieve good rates of convergence. Our main contribution complements this literature with the construction of simultaneous confidence regions for the parameters of interest in such high-dimensional linear models with error-in-variables.

These confidence regions are based on the construction of moment conditions that have an additional orthogonal property with respect to nuisance parameters. We provide a construction that requires us to estimate an additional  high-dimensional linear model  with error-in-variables for each component of interest. We use a multiplier bootstrap to compute critical values for simultaneous confidence intervals for a subset $S$ of the components. We show its validity despite of possible model selection mistakes, and allowing for the cardinality of $S$ to be larger than the sample size.

We apply and discuss the implications of our results to two examples and conduct Monte Carlo simulations to illustrate the performance of the proposed procedure.
\end{abstract}

%\begin{keyword}[class=AMS]
%\kwd[Primary ]{62M05}
%\kwd{62M09}
%\kwd{62G05}
%\kwd[; secondary ]{62P20}\kwd{60J10}
%\end{keyword}

\begin{keyword}
\kwd{honest confidence regions}
\kwd{error-in-variables}
\kwd{high dimensional models}
\end{keyword}

\end{frontmatter}

\section{Introduction}

High-dimensional data sets are now commonplace in a range of fields such as econometrics, finance and genomics. This has motivated the development of a large literature on the estimation of the corresponding parameters of the  models of such data. A key feature of the literature is that these models have a large number of parameters which can be comparable or even exceed the available sample size. Under sparsity assumptions of the high-dimensional parameter vector, penalized methods have been used and proved to be effective in a variety of settings, see, e.g., \cite{fan2011sparse,carvalho2012high,hautsch2014financial}.

In this work we consider high-dimensional linear models with error-in-variables. Such models are challenging because of the need to account for  measurement errors to avoid non-vanishing biases. This has been critical even in the low dimensional setting \cite{Fuller1987} and \cite{CRSC2006}. More recently several authors have considered the high-dimensional linear models with error-in-variables including \cite{LL2009}, \cite{LW}, \cite{SFT2}, \cite{RT1}, \cite{RT2}, \cite{CChen}, \cite{CChen1}, \cite{BRT2014}, \cite{BRT2014b},  \cite{KK2015}, \cite{RZ2015} and \cite{KKCL2016}. These papers propose and analyse different estimators. The main results are rates of convergence in different norms. Under various conditions and suitable choice of penalty parameters, these estimators attain $\ell_q$-rates of convergence of the form
\begin{equation}\label{eq:minimaxbound}\|\hat\beta - \beta_0\|_q \leq C (1+\|\beta_0\|)s^{1/q}\sqrt{\frac{\log p}{n}} \end{equation}
which are minimax optimal, see \cite{BRT2014}. The rate in (\ref{eq:minimaxbound}) highlights the impact of the error-in-variables via the $\ell_2$-norm of $\beta_0$, which is not present in the case where covariates are observed without error, and that consistency can be achieved in high-dimensional settings even if $p\gg n$. However, these estimators are not asymptotically normal and not suitable for the construction of confidence regions with asymptotically correct coverage without imposing stringent assumptions that allow perfect model selection.

Our main contribution is the construction of confidence regions for the parameters of interest in such high-dimensional linear models with error-in-variables. This complements prior work that derived rates of convergence for these models established in the references above. Thus our work is motivated by applications where confidence intervals and/or hypothesis testing is desired instead of prediction accuracy. This is the case in several applications in economics, public health, and genetics. Nonetheless, a direct consequence of the honest confidence intervals is a new estimator that achieves the minimax $\ell_\infty$-rate under weaker design conditions. % which is also related with de-biasing estimators.

Some definitive theoretical findings on the construction of confidence regions for parameters in high-dimensional models have emerged in recent years. In a high dimensional context \cite{BellChernHans:Gauss,BellChenChernHans:nonGauss} provide uniformly valid inference methods for instrumental variables, high-dimensional linear models have been considered in \cite{BCH2011:InferenceGauss,BelloniChernozhukovHansen2011}, \cite{zhang2014confidence}, \cite{GRD2014}, and \cite{javanmard2014confidence} while non-linear models have been considered in \cite{GRD2014}, \cite{BCW-logistic}, \cite{BCK-LAD}, among others.  %General frameworks have been proposed in \cite{CHS:PnP}, \cite{CHS:AnnRev} and \cite{belloni2015uniformly}.
The results in these references are uniformly valid inference over a large set of data generating processes despite of model selection mistakes. Indeed they do not attempt to achieve the oracle property that relies on separation from zero conditions which leads to the lack of uniform validity, see \cite{leeb:potscher:hodges,leeb:potscher:review}. In many of these works the uniform validity of these estimators relies on the use of orthogonal moment functions that can be traced back to Neymann \cite{neyman1965use,Neyman1979} and has been extensively used in various settings, e.g. \cite{bickel:1982}, \cite{cox1987parameter},\cite{newey90}, \cite{newey94}, among others. Such orthogonal moment functions reduce the impact of the estimation of nuisance parameters on the estimation of the parameters of interest. In particular, under suitable conditions this allows for $\sqrt{n}$-consistent estimates that are asymptotically normal despite the use of non-regular estimators for the nuisance parameters that unavoidably arise due to the high-dimensionality.

Here we build upon recent results of estimators for high-dimensional linear models with error-in-variables to construct orthogonal moment functions that will allow us to construct (simultaneous) confidence intervals for these parameters. It follows that the error-in-variables also impacts the construction of the orthogonal moment functions (which can be seen as a de-biasing step) and need to be accounted for. We establish a linear representation for the estimation error. This allow us to show the $\sqrt{n}$-consistency and asymptotic normality for each individual estimate which directly leads to the construction of confidence intervals that are uniformly valid over a large class of data generating processes. Moreover, simultaneous confidence intervals can be constructed based on critical values from a multiplier bootstrap procedure which validity is derived building upon recent results on high-dimensional central limit theorems and bootstrap theorems established in \cite{chernozhukov2013gaussian}, \cite{chernozhukov2014clt}, \cite{chernozhukov2012gaussian}, \cite{chernozhukov2012comparison}, and \cite{chernozhukov2015noncenteredprocesses}. We establish its validity under conditions that allows for simultaneous confidence intervals over a larger number of components than the available sample size.

We also fully characterize the impact of using an estimate $\hat \Gamma$ of the variance of the error-in-variables $\Gamma$ which is important in many applications. Although we can see $\hat \Gamma$ as additional nuisance parameter, one cannot achieve the orthogonal property with respect to $\hat \Gamma$. It turns out that the impact of using an estimate is non-negligible. We further show how to adjust the multiplier bootstrap when the estimator $\hat \Gamma$ itself admits a linear representation which again leads to uniformly valid confidence regions. This approach seems to be new and of independent interest in other high-dimensional problems.

We apply and discuss the implications of our results to two examples. We provide simple sufficient conditions for the validity of the results. The first application is the estimation of the inverse covariance matrices in high-dimensions with error-in-variables. Such problem, without error-in-variables, has been motivated in a variety of applications including social network analysis, climate data analysis, and finance. Recent work that provides estimators with rates of convergence include \cite{meinshausen2009lasso,yuan2010high}. Recently, based on de-biasing ideas, \cite{jankova2015confidence} proposed a methodology for statistical inference for low-dimensional parameters of sparse precision matrices in a high-dimensional settings when there is no error-in-measurements. In the case with error-in-variables, \cite{LW} provides an estimator with rates of convergence. The second application consists of missing data at random which has motivated a lot of the literature in error-in-variables models even in the low dimensional case. In the high-dimensional case, estimators for this case with good $\ell_q$-rates of convergence have been proposed in \cite{RT2}, \cite{LW}, \cite{BRT2014} and \cite{KKCL2016}. As shown in Section \ref{sec:examples}, this is an application where the estimator of the variance of the error-in-variables admits a linear representation. This allows us to adjust the bootstrap procedure and construct regions that account for using the estimate $\hat\Gamma$. Our results complement such findings by providing new estimates and associated confidence bands for potentially a high-dimensional vector of parameters.

The rest of this paper is organized as follows. Section \ref{sec:Model} describes the model under consideration and describes the proposed methodology to construct (simultaneous) confidence regions for the parameters of high-dimensional linear models with error-in-variables. In Section \ref{sec:Main} we provide assumptions and our main theoretical results including the uniform validity of the confidence regions. We present examples that illustrate our results in Section \ref{sec:examples} and simulations in Section \ref{sec:simulations}. All proofs are relegated to the appendix.

\section{Model and Method for Confidence Regions}\label{sec:Model}

We consider i.i.d. observations from a regression model with observation error in the design:
\begin{equation}\label{model} y_i=x_i^T\beta_0+\xi_i, \ \ \ z_i = x_i + w_i, \ \ \ i=1,\ldots, n\end{equation}
where we observe the response variable $y_i$ and the $p$-dimensional vector $z_i$, but we do not observe the covariates $x_i$. The vector $w_i$ and the scalar $\xi_i$ are unobserved zero-mean random vectors. The vector $\beta_0 \in \mathbb{R}^p$ is a vector of unknown parameters to be estimated where the dimension $p$ can be much larger than the sample size $n$, and $\beta_0$ is sparse with $s$ non-zero components, i.e. $\|\beta_0\|_0= s$. The measurement error $w$ satisfies $\Ep[x_jw_k]=0$ and its covariance matrix, $\Gamma = {\rm cov}(w) = {\rm diag}(\Ep[w_{1}^2],\ldots,\Ep[w_{p}^2])$, is known. \footnote{The case of unknown $\Gamma$ is discussed in Section \ref{sec:estimatedGamma}.}

In what follows we define a pseudo-likelihood function $$\ell(y,z,\beta)= -\frac{1}{2}\beta^T(zz^T-\Gamma)\beta+yz^T\beta.$$ Given the zero-mean conditions associated with model (\ref{model}), direct calculations show that the vector of parameters $\beta_0$ solves the following moment condition
\begin{equation}\label{eq:identification} \Ep[ \partial_{\beta} \ell(y,z,\beta)] = \Ep[ z(y-z^T\beta)+\Gamma\beta]=0 \end{equation}
where the term $\Gamma\beta_0$ corrects the bias that arises from using the (noisy) covariates $z$ instead of the unobserved $x$. %The moment condition (\ref{eq:identification}) combined with sparsity assumptions on $\beta_0$ motivated the use of penalized methods to cope with the high-dimensionality.

Next we propose an estimator which is asymptotically normal and a bootstrap method to compute confidence regions for the parameters $\beta_0$. To achieve that we will use a score function $\psi_j$ tailored for each $\beta_{0j}$ with the form
$$ \psi_j(y,z,\beta_j,\eta^j)= \partial_{\beta_j} \ell(y,z,\beta) - (\mu^j)^T \partial_{\beta_{-j}}\ell(y,z,\beta) = (e^j-\mu^j)^T\partial_{\beta}\ell(y,z,\beta) $$
where $e^j$ is the jth canonical basis,\footnote{$e^j_j=1$ and $e^j_k=0$ if $k\neq j$.}  $\mu^j$ is a $p$-dimensional vector with a zero in the jth component, and $\eta^j=(\beta_{-j},\mu^j)$ collects all the nuisance parameters for $\psi_j$. Note that for any choice of $\eta^j=(\beta_{0,-j},\mu^j)$ we have by (\ref{eq:identification}) that
$$ \Ep[\psi_j(y,z,\beta_{0j},\eta^j)] = 0.$$
We will choose $\eta^j_0=(\beta_{0,-j},\mu^j_0)$ so that the function $\psi_j$ also satisfies the following  orthogonality condition
 \begin{equation}\label{ortho:prop} \partial_{\eta^j}\left.\Ep[\psi_j(y,z,\beta_{0j},\eta^j)]\right|_{\eta^j=\eta^j_0}=0. \end{equation}
Condition (\ref{ortho:prop}) makes the procedure first-order insensitive to the estimation error of the nuisance parameters $\eta^j_0$. Importantly, we will construct one such score function for each component $j\in S \subseteq \{1,\ldots,p\}$ we would like to estimate. We will show that this allows for enough adaptivity that will lead to regular and asymptotically normal estimators despite of model selection mistakes and the presence of high-dimensional nuisance parameters. \footnote{It is straightforward to verify that (\ref{eq:identification}) does not satisfy the orthogonality condition (\ref{ortho:prop}) unless covariates were orthogonal to each other.}

Letting $J=\partial_\beta \left.\Ep[\partial_{\beta^T}\ell(y,z,\beta)]\right|_{\beta=\beta_0}=\Ep[zz^T]-\Gamma=\Ep[xx^T]$, it follows that the desired orthogonality property (\ref{ortho:prop}) is achieved if
$\mu_0^j$ solves the system of equations $$J_{j,-j}-\mu J_{-j,-j} = 0.$$ %Therefore we have $\mu_0^j= J_{-j,-j}^{-1}J_{j,-j}^T$.

In the estimation of $\beta_{0j}$, we will preliminary (possibly non-regular) estimators $\hat \eta^j = (\hat \beta_{-j}; \hat \mu^j)$ of the nuisance parameters in $\eta^j_0 = (\beta_{0,-j};\mu_0^j)$. Thus, using $\hat\eta^j$, the score function $\psi_j$ used to estimate $\beta_{0j}$ is given by
\begin{equation}\label{ortho:j}
\begin{array}{rl}
\psi_j(y_i,z_i,\theta,\hat\eta^j)  & = z_{ij}(y_i-z_{ij}\theta-z_{i}^T\hat \beta_{-j})+\Gamma_{jj}\theta\\
& -(\hat \mu^j)^T\{z_{i,-j}(y_i-z_{ij}\theta-z_{i,-j}^T\hat \beta_{-j})+\Gamma_{-j,-j}\hat \beta_{-j} \} \\
& = (e^j-\hat\mu^j)^T\{z_{i}(y_i-z_{ij}\theta-z_{i,-j}^T\hat \beta_{-j})+\Gamma(\theta e^j+\hat\beta_{-j})\}\end{array}\end{equation}

In order to estimate the nuisance parameter $\eta^j_0=(\beta_{0,-j},\mu_0^j)$, by definition $\beta_0$ can be estimated via the methods recently proposed in the literature for high-dimensional linear models with error-in-measurements. Moreover, the vector of parameters $\mu_0^j$ is such that
\begin{equation}\label{aux:model} x_{ij} =  x_{i,-j}^T\mu_0^j + \nu^j_i, \ \ \ z_i = x_i + w_i, \ \ \Ep[x_{i,-j}\nu^j_i]=0, \ \ i=1,\ldots,n \end{equation} where only $(z_i)_{i=1}^n$ are observed. Thus the nuisance parameter $\mu^j_0$ is also characterized by a high-dimensional linear model with error-in-variables similar to (\ref{model}). \footnote{Indeed the corresponding model (\ref{model}) would have the response variable $\tilde y=z_j$ and noise $\tilde \xi=w_{j}+\nu^j$. Here we exploit that $\Gamma$ is diagonal, so that the same moment condition still works since $\Gamma_{j,-j}=0$, $$\begin{array}{rl}
\Ep[z_{-j}\{z_j-(\mu_0^j)^Tz_{-j}\}] & =\Ep[\{x_{-j}+w_{-j}\}\{w_j+\nu^j-(\mu_0^j)^Tw_{-j}\}]=\Gamma_{j,-j}-(\mu_0^j)^T\Gamma_{-j,-j}.\end{array}$$
}
Therefore, the estimation of the nuisance parameters requires the estimation of high-dimensional linear regression models with errors in measurements. Under various conditions, different estimators have been proposed in the literature and shown to have good rates of convergence in the $\ell_1$ and $\ell_2$-norms, see \cite{CChen,CChen1,LW,RT1,RT2,BRT2014}. Our results will apply to many of these estimators that are computed via regularization (typically $\ell_1$-penalty). %We only require (standard) $\ell_1$- and $\ell_2$-rates of convergence that have been established in the literature.

Algorithm \ref{algo:1} summarizes the proposed estimator. We will provide conditions under which these rates of convergence suffice to establish asymptotic normality and $\sqrt{n}$-consistency of $\check\beta_j$ when combined with the orthogonality (\ref{ortho:prop}).

\begin{algorithm}[Estimation based on Orthogonal Score Functions]\label{algo:1} ~\\
%\enspace \emph{Step 1}. Based on $\{ (y_i,z_i)_{i=1}^n,\Gamma\}$, compute an estimator $\hat\beta$ for $\beta_0$.\\ % in (\ref{model})\\
\noindent  \emph{Step 1}. Compute an estimator $\hat\beta$ of $\beta_0$ in (\ref{model}) via regularization.\\ % in
%\enspace \emph{Step 2}. Based on $\{(z_{ij},z_{i,-j})_{i=1}^n,\Gamma\}$, compute an estimator $\hat\mu^j$ for $\mu_0^j$.\\ % in (\ref{aux:model})\\
For each $j \in S \subset \{1,\ldots,p\}$: \\
\enspace \emph{Step 2}. Compute an estimator $\hat\mu^j$ of $\mu_0^j$ in (\ref{aux:model}) via regularization.\\ % in (\ref{aux:model})\\
\enspace \emph{Step 3}. Construct $\psi_j$ as defined in (\ref{ortho:j}) with $\hat\eta^j=(\hat\beta_{-j};\hat\mu^j)$. Compute $\check \beta_j$ as \\
$$ \check\beta_j \in \arg\min_{\theta \in \mathbb{R}} \left|\frac{1}{n}\sum_{i=1}^n\psi_j(y_i,z_i,\theta,\hat\eta^j)\right|$$
%\indent\indent\indent\indent Report $\check \theta_{uj}$ for $u\in \UU$ and $j\in[\pp]$.
\end{algorithm}

In this setting, the minimization in Step 3 has a closed form solution given by
$$ \check\beta_j := \frac{\hat \Sigma_j^{-1}}{n}\sum_{i=1}^n(z_{ij}-z_{i,-j}^T\hat\mu^j)(y_i-z_{i,-j}^T\hat\beta_{-j})-(\hat \mu^j)^T\Gamma_{-j,-j}\hat\beta_{-j}$$
where $\hat \Sigma_j := \left\{\frac{1}{n}\sum_{i=1}^n (z_{ij}-z_{i,-j}^T\hat\mu^j)z_{ij} - \Gamma_{jj} \right\}$.

Due to the orthogonality condition (\ref{ortho:prop}), under mild conditions confidence intervals can be constructed based on the normal approximation, namely
\begin{equation}\label{region:one} \sqrt{n}\sigma_j^{-1}(\check\beta_j-\beta_{0j})\rightsquigarrow N(0,1)\end{equation}
where $\sigma_j^2 = \Sigma_j^{-2}\Ep[\psi_j^2(y,z,\beta_{0j},\eta^j_0)]$, for $\Sigma_j:=\left.\partial_{\beta_j}\Ep[\psi_j(y,z,\beta_{j},\eta^j_0)]\right|_{\beta_j=\beta_{0j}}=\Ep[(z_j-z_{-j}^T\mu^j_0)z_j-\Gamma_{jj}]$. The quantity $\hat\Sigma_j$ is an estimator for $\Sigma_j$ and we estimate $\sigma_j$ by a plug-in rule \footnote{An alternative estimator would use $\check\beta_j$ instead of $\hat\beta_j$.}
\begin{equation}\label{est:sigmaj}
\hat\sigma_j^2 = \hat \Sigma_j^{-2}\frac{1}{n}\sum_{i=1}^n\psi_j^2(y_i,z_i,\hat\beta_j,\hat\eta^j).\end{equation}
Next we construct simultaneous confidence bands for a subset $S\subseteq \{1,\ldots,p\}$,  which cardinality $|S|\geq 2$ is potentially larger than $n$. That is, for a given $\alpha\in(0,1)$, we choose a critical value $c_{\alpha,S}^*$ such that with probability converging to $1-\alpha$ we have
\begin{equation}\label{region:simultaneous} \check \beta_j - c_{\alpha,S}^* \frac{\hat\sigma_j}{\sqrt{n}}\leq \beta_{0j} \leq \check \beta_j + c_{\alpha,S}^* \frac{\hat\sigma_j}{\sqrt{n}} \ \ \mbox{for all} \ j\in S. \end{equation}

Critical values for simultaneous confidence regions can be constructed based on the multiplier bootstrap following the approach in \cite{chernozhukov2013gaussian,chernozhukov2014clt,chernozhukov2012gaussian,chernozhukov2012comparison,chernozhukov2015noncenteredprocesses}. Letting $\hat \psi_j(y_i,z_i):=-\hat\sigma_j^{-1}\hat \Sigma_j^{-1}\psi(y_i,z_i,\check\beta_j,\hat\mu^j)$, define the vector $\widehat{\mathcal{G}}$ as
\begin{equation}\label{def:mb} \widehat{\mathcal{G}}_j:=\frac{1}{\sqrt{n}}\sum_{i=1}^n g_i \hat \psi_j(y_i,z_i), \ \ j=1,\ldots,p, \end{equation}
where $(g_i)_{i=1}^n$ are independent standard normal random variables independent from $(y_i,z_i)_{i=1}^n$. We compute the critical value $c_{\alpha,S}^*$ for a subset $S\subseteq \{1,\ldots,p\}$  as the $(1-\alpha)$-quantile of the conditional distribution of $$\max_{j\in S} |\widehat{\mathcal{G}}_j|$$ given the data $(y_i,z_i)_{i=1}^n$.
Theoretical results will allow the cardinality of $S$ to grow with the sample size potentially exceeding it but requiring $\log^7 |S| = o(n)$ among other technical conditions.

\section{Main Results}\label{sec:Main}

In this section we state our assumptions and main theoretical results for the validity of the confidence regions based on (\ref{region:one}) and (\ref{region:simultaneous}). The first assumption regarding the data generating process is as follows.\footnote{Recall that for $\gamma>0$, the random variable $\eta$ is said to be
 {\it $\gamma$-subgaussian} if, for all $t\in\mathbb{R}$,
$\Ep[\text{exp}(t\eta)]\leq \text{exp}(\gamma^2t^2/2).$ Similarly, a random vector $\zeta\in \R^p$ is said to be
 {\it $\gamma$-subgaussian} if the inner products $(\zeta, v)$ are  $\gamma$-subgaussian for any $v\in \R^p$ with $\|v\|=1$.}

~\\
{\bf Condition A.} {\it (i) The vectors $\{(y_i,z_i,x_i,w_i,\xi_i), i=1,\ldots,n\}$ are i.i.d. observations obeying (\ref{model}). (ii) The vector $x_i$ is drawn from a subgaussian distribution with parameter $\sigma_x^2 \leq C$ and its covariance matrix $\Omega$ has eigenvalues between positive constants $c$ and $C$ independent of $n$. (iii) The elements of the random noise vector $\xi$ are independent zero-mean subgaussian random variables with variance parameter $\sigma^2_\xi\leq C$. (iv) The noise vector $w_i$ is a zero-mean subgaussian random vector with variance parameter $\sigma_w^2\leq C$ satisfying $\Ep[x_jw_k]=0$ for all $1\leq j<k\leq p$. (v) The covariance matrix of $w$ is diagonal and known, i.e. $\Gamma = {\rm cov}(w) = {\rm diag}(\Ep[w_{1}^2],\ldots,\Ep[w_{p}^2])$. }
~\\

Conditions A(i)-(iii) are standard in high-dimensional linear regression models. In particular they guarantee that the (unobserved) design matrix $\frac{1}{n}\sum_{i=1}^nx_ix_i^T$ has well behaved $\ell_q$-sensitivity and restricted eigenvalues. These quantities are useful to establish convergence in the $\ell_q$-norms for penalized estimators as shown in \cite{gautier2013pivotal,gautier2011high} and it has been used in \cite{BRT2014} for the error in measurements model (\ref{model}). It is related and weakens conditions associated with the restricted eigenvalue condition \cite{BickelRitovTsybakov2009}. Condition A(iv)-(v) provides a way to identify the unknown parameter $\beta_0$ despite of the error in measurements. In the literature several different conditions are used. The case of unknown $\Gamma$ is considered in Section \ref{sec:estimatedGamma} where an estimator $\hat \Gamma$ is available. % They allow for different dependence in the covariance $\Gamma$ which is assumed to be known in many references.

In particular, under Condition A and suitable choice of parameters, different estimators  $\hat\beta$ in the literature achieve optimal rates of convergence in $\ell_1$ and $\ell_2$-norms with probability approaching 1 as the sample size increases. (The same holds for estimators $\hat\mu^j$ of $\mu_0^j$.) We will be agnostic about the choice of such estimators (which might be different as some assumptions change) and require the following condition. In what follows $\Delta_n\to 0$ is a fixed sequence and $C$ is a fixed constant.

~\\
{\bf Condition B.} {\it Let $S \subseteq \{1,\ldots,p\}$, $|S|\geq 2$, and $s=s_n\geq 1$. We have $\|\beta_0\|_0\leq s$ and $\max_{j\in S}\|\mu^j_0\|_0\leq s$. With probability $1-\Delta_n$ we have that the estimators $\hat\beta$ and $(\hat\mu^j)_{j\in S}$ satisfy for $q \in \{1,2\}$
\begin{itemize}
\item[(i)]$\displaystyle \|\hat\beta-\beta_0\|_q \leq C (1+\|\beta_0\|)s^{1/q}\sqrt{\frac{\log p}{n}}, \ \ \mbox{and} \ \ \|\hat\beta\|_0 \leq Cs; $
\item[(ii)]$\displaystyle   \|\hat\mu^j-\mu_0^j\|_q \leq C (1+\|\mu_0^j\|)s^{1/q}\sqrt{\frac{\log p}{n}},\ \ \mbox{and} \ \ \|\hat\mu^j\|_0 \leq Cs \ \ \mbox{for all} \ j\in S$\end{itemize} }
~\\

Condition B assumes that the vectors $\beta_0$ and $\mu_0^j$ are sparse. The $\ell_2$-rate combined with the sparsity bound  immediately imply an $\ell_1$-rate of convergence. We note that several estimators were shown to satisfy the required $\ell_1$ and $\ell_2$-rate of convergence. However, sparsity guarantees have not been common in the literature. Nonetheless, these rates of convergence and the sparsity condition in $\beta_0$ can be used to show that hard thresholding these estimators yields the desired sparsity requirements and preserves the rates of convergence, see \cite{BCKRT2016a} for a detailed analysis. We note that this will cover the estimators proposed in \cite{LW,BRT2014} as well as post-selection refitted versions of these estimators.

Next we state conditions on the growth of various parameters that characterize the model. In what follows $\delta_n\to 0 $ is a fixed sequence.

~\\
{\bf Condition C.}{\it We have that $\|\beta_0\|_\infty \leq C$ and $$(1+\|\beta_0\|)(1+\max_{j\in S}\|\mu_0^j\|)s \log (np) \leq \delta_n\sqrt{n}.$$}
~\\
The growth restriction of $s$, $p$ and $n$ are compatible with the requirements to construct confidence intervals for high-dimensional linear regression models without errors-in-variables. We note that a consequence of Condition A(ii) is that $\max_{j\leq p}\|\mu_0^j\|\leq C$ which could be used to simplify Condition C above. The current statement of Condition C highlights how the norm of $\mu_0^j$ would impact the requirements if Condition A(ii) is relaxed.

The result below is one of our main results. It establishes a linear representation for the estimators despite of the high-dimensional nuisance parameters and model selection mistakes. We note that the sequence $\delta_n\to 0$ defined in Condition C controls the approximation error of the linear representation.

\begin{theorem}[Uniform Linear Representation]\label{thm:main}
Under Conditions A, B and C, uniformly over $j\in S$ we have
$$\sqrt{n}\sigma_j^{-1}(\check\beta_j-\beta_{0j}) = \frac{1}{\sqrt{n}}\sum_{i=1}^n \bar\psi_j(y_i,z_i) + O_\P(\sigma_j^{-1}\Sigma_j^{-1}\delta_n)$$
where $\bar\psi_j(y,z)=-\sigma_j^{-1}\Sigma_j^{-1}\psi_j(y,z,\beta_{0j},\eta_0^j)$ satisfies $\Ep[\bar\psi_j^2(y,z)]=1$.
\end{theorem}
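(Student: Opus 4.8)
The plan is to exploit that, since $\ell$ is quadratic, the empirical score $\theta\mapsto\frac1n\sum_i\psi_j(y_i,z_i,\theta,\hat\eta^j)$ is affine in $\theta$, so Step 3 of Algorithm \ref{algo:1} admits the exact solution $\check\beta_j-\beta_{0j}=\hat\Sigma_j^{-1}\frac1n\sum_i\psi_j(y_i,z_i,\beta_{0j},\hat\eta^j)$, where $\hat\Sigma_j=-\partial_\theta\{\frac1n\sum_i\psi_j(y_i,z_i,\theta,\hat\eta^j)\}$ is precisely the slope computed in Step 3. Multiplying by $\sqrt n\,\sigma_j^{-1}$, it suffices to show, uniformly over $j\in S$, that $\frac1{\sqrt n}\sum_i\psi_j(\cdot,\hat\eta^j)$ may be replaced by $\frac1{\sqrt n}\sum_i\psi_j(\cdot,\eta^j_0)$ and $\hat\Sigma_j^{-1}$ by its limit, at cost $O_\P(\Sigma_j^{-1}\delta_n)$ in the raw scale (I write $\psi_j(\cdot,\eta)$ for $\psi_j(y,z,\beta_{0j},\eta)$). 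The identity $\Ep[\bar\psi_j^2]=1$ is immediate from $\bar\psi_j=-\sigma_j^{-1}\Sigma_j^{-1}\psi_j(\cdot,\eta^j_0)$ and $\sigma_j^2=\Sigma_j^{-2}\Ep[\psi_j^2(\cdot,\eta^j_0)]$, and the minus sign in $\bar\psi_j$ matches the relation $\hat\Sigma_j\to-\Sigma_j$ forced by $\Sigma_j=\partial_{\beta_j}\Ep[\psi_j]|_{\beta_{0j}}=-\Ep[(\nu^j)^2]$.

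For the nuisance replacement I would split $\frac1{\sqrt n}\sum_i\{\psi_j(\cdot,\hat\eta^j)-\psi_j(\cdot,\eta^j_0)\}$ into a bias term $\sqrt n\,\Ep[\psi_j(\cdot,\hat\eta^j)]$ (conditioning on $\hat\eta^j$, and using $\Ep[\psi_j(\cdot,\eta^j_0)]=0$) and a centered empirical-process term $\sqrt n(\En-\Ep)[\psi_j(\cdot,\hat\eta^j)-\psi_j(\cdot,\eta^j_0)]$. Because $\psi_j$ is bilinear in $\eta^j=(\beta_{-j},\mu^j)$, the expansion in the increments $\Delta_\beta=\hat\beta_{-j}-\beta_{0,-j}$ and $\Delta_\mu=\hat\mu^j-\mu^j_0$ is exact: it consists of two first-order terms plus a single cross term, whose expectation is $\Delta_\mu^T J_{-j,-j}\Delta_\beta$ with $J=\Ep[xx^T]$. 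The orthogonality property (\ref{ortho:prop}) --- equivalently $\Ep[\partial_\beta\ell(y,z,\beta_0)]=0$ from (\ref{eq:identification}) and $(e^j-\mu^j_0)^T J_{\cdot,-j}=0$ from the normal equations defining $\mu^j_0$ --- annihilates the expectations of both first-order terms, so the entire bias reduces to the cross term, bounded by $\sqrt n\,\|\Delta_\mu\|\,\|\Delta_\beta\|\,\|J\|\lesssim(1+\|\beta_0\|)(1+\|\mu^j_0\|)s\,(\log p)/\sqrt n\lesssim\delta_n$ by Conditions B and C.

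The centered empirical process I would control without sample splitting by exploiting the same quadratic-in-data structure: each first-order piece is bounded via H\"older by $\|\Delta_\mu\|_1$ or $\|\Delta_\beta\|_1$ times $\max_k|(\En-\Ep)[\cdot]|$ of products such as $z_kz_\ell$ or $z_k(y-z^T\beta_0)$; these are sub-exponential, so a maximal inequality gives $\max_k|(\En-\Ep)[\cdot]|=O_\P(\sqrt{\log p/n})$ and each piece is $O_\P(s\,(\log p)/\sqrt n)=O_\P(\delta_n)$, the remaining bilinear piece being of smaller order. Replacing $\hat\Sigma_j^{-1}$ by $-\Sigma_j^{-1}$ I would use $\Ep[(\nu^j)^2]\ge c>0$ (so $|\Sigma_j|$ is bounded away from $0$) together with $|\hat\Sigma_j+\Sigma_j|\lesssim\|\Delta_\mu\|_1 O_\P(1)+O_\P(\sqrt{\log|S|/n})$; the induced error $(\hat\Sigma_j^{-1}+\Sigma_j^{-1})\frac1{\sqrt n}\sum_i\psi_j(\cdot,\eta^j_0)=O_\P(\Sigma_j^{-1}\sigma_j|\hat\Sigma_j+\Sigma_j|)$ stays $O_\P(\Sigma_j^{-1}\delta_n)$ because $\sigma_j\lesssim1+\|\beta_0\|$ and the factor $(1+\|\beta_0\|)$ in Condition C absorbs exactly this growth. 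All bounds are made uniform in $j\in S$ by union bounds over the at most $p$ coordinates in the H\"older step and over the $|S|$ indices, the resulting $\log|S|\le\log(np)$ being dominated in Condition C.

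The step I expect to be the main obstacle is this centered empirical-process term, since $\hat\eta^j$ depends on the full sample $(y_i,z_i)_{i=1}^n$ and no sample splitting is available. The resolution I am counting on is structural rather than entropic: $\psi_j$ is a fixed quadratic form in the data with coefficients linear in $\hat\eta^j$, which collapses the whole term to $\ell_1$-norms of the estimation errors (controlled by Condition B and the sparsity bounds $\|\hat\beta\|_0,\|\hat\mu^j\|_0\le Cs$) times coordinatewise maxima of centered averages of sub-exponential products. Verifying that $\sup_{j\in S}$ of the remainder is genuinely $O_\P(\delta_n)$ --- rather than merely $o_\P(1)$ --- is where the growth restriction of Condition C, through the $s\log(np)$ and $(1+\|\beta_0\|)(1+\max_{j}\|\mu^j_0\|)$ factors, is used to full strength.
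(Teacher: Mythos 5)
Your proposal is correct and follows essentially the same route as the paper: the paper proves this as a corollary of Theorem \ref{thm:mainEst} (taking $\hat\Gamma=\Gamma$), whose proof uses exactly your exact bilinear expansion of the closed-form solution into two first-order terms whose population means vanish by orthogonality (bounded by H\"older, $\ell_1$-rate times sup-norm of a centered sub-exponential empirical average) plus a cross term bounded via $\ell_2$-rates and sparse-eigenvalue control, followed by the separate replacement of $\hat\Sigma_j$ by $\Sigma_j$. The only differences are organizational (the paper labels these pieces $T_1,\dots,T_5$ rather than splitting bias from centered fluctuation, and its sign convention for $\Sigma_j$ differs from the literal derivative, which your bookkeeping resolves consistently).
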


Theorem \ref{thm:main} holds uniformly over the class of data generating processes that satisfy  Conditions A, B and C. In particular, it allows for possible model selection mistakes that are likely to happen when coefficients are near zero.
Theorem \ref{thm:main} can be used to establish useful estimation results. In particular, the following $\ell_\infty$-rate of convergence without additional assumptions on the design matrix.

\begin{corollary}[$\ell_\infty$-rate of Convergence]\label{thm:Linfty}
Under Conditions A, B and C with $S=\{1,\ldots,p\}$, and $(1+\|\beta_0\|)\log(np)\log^{1/2} n \leq \delta_n \sqrt{n}$, Algorithm \ref{algo:1} yields an estimator $\check\beta$ such that with probability $1-\varepsilon-o(1)$
$$\|\check\beta - \beta_0\|_\infty \leq C(1+\|\beta_0\|) \sqrt{\frac{\log (Cp/\varepsilon)}{n}}$$
for some constant $C$ independent of $n$.\end{corollary}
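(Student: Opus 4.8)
The plan is to derive the $\ell_\infty$-rate directly from the uniform linear representation in Theorem~\ref{thm:main} applied with $S=\{1,\ldots,p\}$. Writing the representation as
$$\sqrt{n}\,\sigma_j^{-1}(\check\beta_j-\beta_{0j}) = \frac{1}{\sqrt{n}}\sum_{i=1}^n \bar\psi_j(y_i,z_i) + O_\P\!\left(\sigma_j^{-1}\Sigma_j^{-1}\delta_n\right),$$
I would first control the leading empirical-process term uniformly over $j$. Since $\Ep[\bar\psi_j^2]=1$ and the $\bar\psi_j$ are (products of) subgaussian quantities under Condition~A, a standard maximal inequality for subgaussian sums gives $\max_{j\le p}\bigl|n^{-1/2}\sum_i \bar\psi_j(y_i,z_i)\bigr| \le C\sqrt{\log(Cp/\varepsilon)}$ with probability at least $1-\varepsilon - o(1)$. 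The role of the sharpened growth condition $(1+\|\beta_0\|)\log(np)\log^{1/2}n \le \delta_n\sqrt n$ is precisely to guarantee that the remainder term, once converted back to the $\beta$-scale, is dominated by this $\sqrt{\log p}$ fluctuation rather than the other way around.

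Next I would translate from the normalized scale back to $\|\check\beta-\beta_0\|_\infty$. Multiplying through by $n^{-1/2}\sigma_j$ yields
$$|\check\beta_j - \beta_{0j}| \le \frac{\sigma_j}{\sqrt n}\Bigl(\max_{k\le p}\bigl|n^{-1/2}\textstyle\sum_i \bar\psi_k\bigr|\Bigr) + O_\P\!\left(\Sigma_j^{-1}\frac{\delta_n}{\sqrt n}\right),$$
and then take the maximum over $j$. Here I would invoke the design-type bounds implied by Condition~A: the eigenvalue bounds on $\Omega$ together with the subgaussianity of $x$, $w$, $\xi$ give uniform two-sided control $c \le \Sigma_j \le C$ and $\sigma_j \le C(1+\|\beta_0\|)$ for all $j$, where the factor $1+\|\beta_0\|$ enters through the error-in-variables variance of $\psi_j$ exactly as it does in the minimax rate~(\ref{eq:minimaxbound}). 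Substituting these bounds collapses the first term to $C(1+\|\beta_0\|)\sqrt{\log(Cp/\varepsilon)/n}$ and shows the remainder is $o$ of it under the strengthened Condition~C, giving the claimed bound.

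The main obstacle I anticipate is twofold. First, Theorem~\ref{thm:main} is stated with an $O_\P$ remainder that is \emph{uniform over $j\in S$} but the constant hidden in that $O_\P$ must be tracked carefully so that, after taking $\max_{j\le p}$ and multiplying by $\sigma_j$, it does not reintroduce a factor of $\log p$ that would spoil the rate; this is where the extra $\log^{1/2}n$ in the hypothesis earns its place, and I would need to confirm that the proof of Theorem~\ref{thm:main} indeed delivers a remainder bound with the required uniformity (not merely pointwise in $j$). Second, the probability bookkeeping: the event on which Condition~B holds carries probability $1-\Delta_n$, the maximal inequality contributes the $\varepsilon$, and the approximation error contributes the $o(1)$, so I must verify these combine to the stated $1-\varepsilon-o(1)$ without any term silently degrading to a constant. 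The estimation-of-$\sigma_j$ and $\Sigma_j$ issues do \emph{not} arise here, since the corollary is a statement about $\check\beta$ itself and uses the population $\sigma_j,\Sigma_j$ only as scaling constants controlled by Condition~A.
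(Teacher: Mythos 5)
Your overall route is the paper's: apply Theorem \ref{thm:main} with $S=\{1,\ldots,p\}$, bound $\max_{j\le p}\bigl|n^{-1/2}\sum_{i}\bar\psi_j(y_i,z_i)\bigr|$ by $C\sqrt{\log(Cp/\varepsilon)}$, and convert back to the $\beta$-scale using $c\le\Sigma_j\le C$ and $\sigma_j\le C(1+\|\beta_0\|)$ (the paper proves the latter exactly as you sketch, via the eigenvalue bounds of Condition A together with $\|\mu_0^j\|\le C$). The remainder bookkeeping is also as you describe.

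The gap is in the maximal-inequality step, which is where the real work of the corollary lives. The scores $\bar\psi_j(y_i,z_i)$ are, up to the bounded shift $\Gamma_{jj}\beta_{0j}$, products of two subgaussian quantities, $(e^j-\mu_0^j)^Tz_i$ and $\xi_i-w_i^T\beta_0$, hence only subexponential; a ``standard maximal inequality for subgaussian sums'' does not apply and would not by itself yield the clean $\sqrt{\log(Cp/\varepsilon)/n}$ bound. The paper's proof instead symmetrizes (Lemma 2.3.7 of \cite{van1996weak}), applies the Rademacher maximal inequality conditionally on the event $\mathcal{E}_{y,z}=\{\max_{j\le p}\En[\bar\psi_j^2(y,z)]^{1/2}\le 2\}$, and then must separately prove $P(\mathcal{E}_{y,z}^c)=o(1)$ via Lemmas \ref{lem:m2bound} and \ref{lem:subgaussian}; the bound obtained there, $M_2\lesssim (1+\|\beta_0\|)^2\log(n)\log(pn)\max_{j}\sigma_j^{-2}\Sigma_j^{-2}(1+\|\mu_0^j\|)^2$, is precisely what consumes the strengthened hypothesis $(1+\|\beta_0\|)\log(pn)\log^{1/2}n\le\delta_n\sqrt n$. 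You instead assign that hypothesis to controlling the $O_\P(\sigma_j^{-1}\Sigma_j^{-1}\delta_n)$ remainder, but that term is already $o(1)$ under Condition C alone and needs no strengthening. So as written your argument leaves the delicate second-moment/tail control unaddressed and mislocates where the extra assumption earns its keep; the fix is either the paper's symmetrization-plus-truncation argument or a Bernstein-type bound for subexponential variables combined with a union bound over $j$, either of which uses the extra growth condition at exactly that point.
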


Corollary \ref{thm:Linfty} establishes a $\ell_\infty$-norm rate of convergence for $\check\beta$ that matches the associated minimax lower bound for the $\ell_\infty$-rate of convergence established in Theorem 3 of \cite{BRT2014}. We note that this is achieve under weaker design conditions than in \cite{BRT2014} which required $\kappa_\infty(s,3) \geq c$ (implied by vanishing mutual coherence, i.e. near zero correlation across unobserved covariates $x$).

Another consequence of Theorem \ref{thm:main} is the construction of confidence intervals for each component as $\sqrt{n}\sigma_j^{-1}(\check\beta_j-\beta_{0j})\rightsquigarrow N(0,1)$. Importantly, it holds uniformly over data generating processes with arbitrary small coefficients. Indeed, the orthogonality condition mitigates the impact of model selection mistakes which are unavoidable for those components.

\begin{corollary}[Componentwise Confidence Intervals]\label{cor:PointwiseInference}
Let $\mathcal{M}_n$ be the set of data generating processes that satisfies Conditions A, B and C for a fixed $n$. We have that
$$ \lim_{n\to \infty} \sup_{\mathcal{M}\in \mathcal{M}_n} \max_{j\leq p} \left|\P_\mathcal{M} \left( \sqrt{n}|\check \beta_j - \beta_{0j}| \leq \Phi^{-1}(1-\alpha/2)\sigma_j \right)  - (1-\alpha)\right| = 0$$
Further, the result also holds when $\sigma_j$ is replaced by $\hat\sigma_j$ as defined in (\ref{est:sigmaj}).
\end{corollary}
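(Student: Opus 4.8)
The plan is to deduce the corollary from the uniform linear representation of Theorem~\ref{thm:main}. Write $T_j := \sqrt{n}\,\sigma_j^{-1}(\check\beta_j - \beta_{0j})$ and $G_j := \frac{1}{\sqrt n}\sum_{i=1}^n \bar\psi_j(y_i,z_i)$, so that Theorem~\ref{thm:main} (with $S=\{1,\dots,p\}$) gives $T_j = G_j + R_j$ with $\max_{j\le p}|R_j| = O_\P(\delta_n)$ uniformly over $\mathcal{M}_n$. Since the event in the statement is $\{|T_j|\le c_\alpha\}$ with $c_\alpha := \Phi^{-1}(1-\alpha/2)$ and $\P(|Z|\le c_\alpha)=1-\alpha$ for $Z\sim N(0,1)$, the quantity to be controlled is bounded by twice the Kolmogorov distance between $T_j$ and $Z$. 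Hence it suffices to show
$$\max_{j\le p}\ \sup_{t\in\R}\bigl|\P(T_j\le t)-\Phi(t)\bigr| \to 0$$
uniformly over $\mathcal{M}_n$, and then to pass from $\sigma_j$ to $\hat\sigma_j$.

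For the leading term I would invoke a classical Berry--Esseen bound. By construction $(\bar\psi_j(y_i,z_i))_{i=1}^n$ are i.i.d., mean zero (by the moment condition $\Ep[\psi_j(y,z,\beta_{0j},\eta_0^j)]=0$) and of unit variance ($\Ep[\bar\psi_j^2]=1$). Under Condition~A the score $\psi_j$ evaluated at the truth is a bounded linear combination of products of the subgaussian variables $(x_i,w_i,\xi_i)$, hence subexponential with parameter bounded uniformly in $j$ and over $\mathcal{M}_n$; together with the fact that $\Sigma_j$ and $\sigma_j$ are bounded away from $0$ and $\infty$ (a consequence of the eigenvalue bounds in Condition~A), this yields $\sup_j \Ep[|\bar\psi_j|^3]\le C$. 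Berry--Esseen then gives $\sup_t|\P(G_j\le t)-\Phi(t)|\le C\,n^{-1/2}$ uniformly over $j$ and over $\mathcal{M}_n$. To absorb $R_j$ I would use that $\Phi$ has density bounded by $1/\sqrt{2\pi}$: for any $\epsilon>0$, $\P(T_j\le t)\le \P(G_j\le t+\epsilon)+\P(|R_j|>\epsilon)\le \Phi(t)+\epsilon/\sqrt{2\pi}+Cn^{-1/2}+\P(|R_j|>\epsilon)$, and symmetrically from below; choosing a sequence $\epsilon=\epsilon_n\to 0$ that dominates the $O_\P(\delta_n)$ remainder makes all three terms vanish uniformly, proving the displayed bound and hence the first assertion.

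For the second assertion I would show $\max_{j\le p}|\hat\sigma_j/\sigma_j-1|=o_\P(1)$ uniformly over $\mathcal{M}_n$. Splitting $\hat\sigma_j^2=\hat\Sigma_j^{-2}\,\frac1n\sum_i\psi_j^2(y_i,z_i,\hat\beta_j,\hat\eta^j)$, I would first establish $\max_j|\hat\Sigma_j-\Sigma_j|=o_\P(1)$ and then $\max_j\bigl|\frac1n\sum_i\psi_j^2(y_i,z_i,\hat\beta_j,\hat\eta^j)-\Ep[\psi_j^2(y,z,\beta_{0j},\eta_0^j)]\bigr|=o_\P(1)$. Each follows by replacing empirical averages by their expectations (subexponential concentration with a union bound over $j\le p$, affordable because $\log p=o(\sqrt n)$ by Condition~C) and then replacing $(\hat\beta,\hat\mu^j)$ by $(\beta_0,\mu_0^j)$, the plug-in error being bounded through the $\ell_1/\ell_2$ rates of Condition~B against the growth restriction of Condition~C. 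Given this, $\{|T_j|\le c_\alpha\,\hat\sigma_j/\sigma_j\}$ differs from $\{|T_j|\le c_\alpha\}$ only through a factor $1+o_\P(1)$ on the critical value, and the uniform normal approximation just established (again via the bounded density of $\Phi$) shows its probability still converges to $1-\alpha$ uniformly.

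The main obstacle is the uniformity in $j$, and within it the consistency of the plug-in variance $\hat\sigma_j$. Unlike $\check\beta_j$ itself, for which the Neyman orthogonality~(\ref{ortho:prop}) annihilates the first-order effect of the nuisance estimation error, the estimator $\hat\sigma_j$ is a quadratic functional of the data into which the high-dimensional estimates $\hat\beta$ and $\hat\mu^j$ enter without such protection, so its error must be controlled directly. Doing this simultaneously for all $p$ coordinates (with possibly $p\gg n$) requires sub-exponential tail and union-bound arguments calibrated against the $\log(np)$ growth budget in Condition~C, and care that the cross terms created by the plug-in errors remain $o_\P(1)$; this is where the bulk of the technical work lies, whereas the Berry--Esseen step for the leading sum is essentially off-the-shelf.
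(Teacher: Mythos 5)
Your overall architecture is the same as the paper's: linear representation from Theorem \ref{thm:main}, a third-moment CLT for the i.i.d.\ leading sum (Berry--Esseen is just the quantitative form of the Lyapunov CLT the paper invokes), anti-concentration of $\Phi$ to absorb the $O_\P(\delta_n)$ remainder, and uniform consistency of $\hat\sigma_j/\sigma_j$ for the second assertion (which is exactly Step~2 in the paper's proof of Theorem \ref{thm:inference}). However, there is one step that fails as written: the claim that $\sup_{j}\Ep[|\bar\psi_j|^3]\leq C$ uniformly over $\mathcal{M}_n$, justified by ``$\Sigma_j$ and $\sigma_j$ are bounded away from $0$ and $\infty$.'' The class $\mathcal{M}_n$ does not force $\sigma_j$ to be bounded above: Condition C only controls $\|\beta_0\|_\infty$ and the product $(1+\|\beta_0\|)(1+\max_j\|\mu_0^j\|)s\log(np)/\sqrt{n}$, so $\|\beta_0\|_2$ may grow like $\sqrt{s}$, and the paper only establishes $\sigma_j\leq C(1+\|\beta_0\|)$ and $\Ep[\psi_j^2]\leq C(1+\|\beta_0\|)^2$ with no matching lower bound of that order (e.g.\ $\sigma_\xi^2$ and the $\Gamma_{kk}$ are only bounded above). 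Consequently the subexponential parameter of $\psi_j(\cdot,\cdot,\beta_{0j},\eta_0^j)$ is of order $(1+\|\beta_0\|)(1+\|\mu_0^j\|)$, not $O(1)$, and after normalizing to unit variance the third moment of $\bar\psi_j$ can still diverge with $n$.

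The gap is local and repairable, and its repair is precisely where the paper's proof spends its effort: one shows (display (\ref{barpsi:mmoment}) plus Lemma \ref{lem:auxmoment} with $\delta=1/\log n$) that $\Ep[|\bar\psi_j|^3]\leq C(1+\|\beta_0\|)^{1+\delta}(1+\|\mu_0^j\|)^{1+\delta}$ with a constant that stays bounded for this choice of $\delta$, and then the Berry--Esseen/Lyapunov ratio $\Ep[|\bar\psi_j|^3]/\sqrt{n}$ still vanishes because Condition C gives $(1+\|\beta_0\|)(1+\|\mu_0^j\|)\leq \delta_n\sqrt{n}/\log(pn)$ and $(1+\|\beta_0\|)^{\delta}=O(1)$ when $\delta=1/\log n$. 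If you replace your uniform constant bound by this growing bound, your Berry--Esseen step delivers $\sup_t|\P(G_j\leq t)-\Phi(t)|\leq C\,\Ep[|\bar\psi_j|^3]\,n^{-1/2}\to 0$ uniformly, and the rest of your argument (remainder absorption and the $\hat\sigma_j$ replacement) goes through as you describe.
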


Next we turn to simultaneous confidence bands over $S\subseteq\{1,\ldots,p\}$ components of $\beta_0$. We allow for the cardinality of $S$ to also grow with the sample size (and potentially $S=\{1,\dots,p\}$). We use central limit theorems for high-dimensional vectors, see \cite{chernozhukov2015noncenteredprocesses} and the references therein. The following result provides sufficient conditions under which the multiplier bootstrap procedure based  on (\ref{def:mb}) yields (honest) simultaneous confidence bands that are asymptotically valid

\begin{theorem}\label{thm:inference}
Let $\mathcal{M}_n$ be the set of data generating processes that satisfies Conditions A, B and C for a fixed $n$. Furthermore, suppose that:\\
(i) $\delta_n\log^{3/2}(|S|)(1+\|\beta_0\|)(1+\max_{j\in S}\|\mu^j_0\|)=o(1)$, and\\
(ii) $\max_{j\in S}\{\sigma_j^{-1}\Sigma_j^{-1}(1+\|\beta_0\|)(1+\|\mu^j_0\|)\}^4\log^7 |S| =o(n)$. \\
For the critical value $c_{\alpha,S}$ computed via the multiplier bootstrap procedure, we have that
$$ \lim_{n\to \infty} \sup_{\mathcal{M}\in \mathcal{M}_n} \left|\P_{\mathcal{M}}{\small \left( \check \beta_j -  \frac{c_{\alpha,S}^*\hat\sigma_j}{\sqrt{n}}\leq \beta_{0j} \leq \check \beta_j + \frac{c_{\alpha,S}^*\hat \sigma_j}{\sqrt{n}},  \forall j\in S\right)}  - (1-\alpha)\right| = 0$$
\end{theorem}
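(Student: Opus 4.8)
The plan is to reduce the simultaneous coverage statement to a statement about a single maximum statistic and then to control that statistic by combining the linear representation of Theorem \ref{thm:main} with the high-dimensional Gaussian-approximation and bootstrap results of \cite{chernozhukov2013gaussian,chernozhukov2014clt,chernozhukov2015noncenteredprocesses}. First I would rewrite the simultaneous coverage event as $\{\max_{j\in S}\sqrt{n}\,\hat\sigma_j^{-1}|\check\beta_j-\beta_{0j}|\le c_{\alpha,S}^*\}$, so that the task becomes showing that $c_{\alpha,S}^*$ approximates the $(1-\alpha)$-quantile of $T_n:=\max_{j\in S}\sqrt{n}\,\sigma_j^{-1}|\check\beta_j-\beta_{0j}|$ uniformly over $\mathcal{M}_n$. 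Using Theorem \ref{thm:main} together with $\hat\sigma_j/\sigma_j\to 1$ (uniformly in $j$, which follows from Condition B and the plug-in definition (\ref{est:sigmaj})), the statistic $T_n$ equals $\max_{j\in S}|\frac{1}{\sqrt n}\sum_{i=1}^n\bar\psi_j(y_i,z_i)|$ up to an additive error of order $\max_{j\in S}\sigma_j^{-1}\Sigma_j^{-1}\delta_n$; Condition (i) is precisely what renders this error negligible once it is weighed against the anti-concentration rate $\log^{1/2}|S|$ of a Gaussian maximum.

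Second I would invoke a high-dimensional central limit theorem to approximate the law of $\max_{j\in S}|\frac{1}{\sqrt n}\sum_i\bar\psi_j|$ by that of $\max_{j\in S}|Z_j|$, where $Z$ is centered Gaussian with covariance $\Sigma^0_{jk}=\Ep[\bar\psi_j\bar\psi_k]$; by Theorem \ref{thm:main} the diagonal satisfies $\Sigma^0_{jj}=1$, so the normalization is built in. The summands $\bar\psi_j=-\sigma_j^{-1}\Sigma_j^{-1}\psi_j(\cdot,\beta_{0j},\eta_0^j)$ are subgaussian under Condition A, with tail and moment bounds governed by the factor $\sigma_j^{-1}\Sigma_j^{-1}(1+\|\beta_0\|)(1+\|\mu_0^j\|)$; controlling these moments in the Gaussian-approximation error bound (of the form $(\log^7|S|/n)^{1/6}$ times a moment factor in the CCK framework) is exactly what Condition (ii), with its fourth power and $\log^7|S|=o(n)$ budget, delivers.

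Third I would establish the bootstrap leg. Conditionally on the data, $\widehat{\mathcal{G}}$ in (\ref{def:mb}) is Gaussian with covariance $\hat\Sigma^0_{jk}=\frac1n\sum_i\hat\psi_j(y_i,z_i)\hat\psi_k(y_i,z_i)$, the empirical analogue of $\Sigma^0$ built from the estimated scores $\hat\psi_j$. By the Gaussian comparison theorem the quantiles of $\max_{j\in S}|\widehat{\mathcal{G}}_j|$ and of $\max_{j\in S}|Z_j|$ differ by at most a quantity of order $\|\hat\Sigma^0-\Sigma^0\|_\infty^{1/3}\log^{2/3}|S|$; combined with the anti-concentration inequality for Gaussian maxima, matching the quantiles of $T_n$ with $c_{\alpha,S}^*$ then reduces to the uniform bound $\|\hat\Sigma^0-\Sigma^0\|_\infty=o_\P(\log^{-2}|S|)$ over $\mathcal{M}_n$.

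The \emph{main obstacle} is this last uniform covariance bound, since $\hat\psi_j$ substitutes the plug-in nuisances $\hat\eta^j=(\hat\beta_{-j},\hat\mu^j)$, $\check\beta_j$, $\hat\sigma_j$ and $\hat\Sigma_j$ into the score, so each of the $|S|^2$ entries of $\hat\Sigma^0$ carries its own estimation error that must be tamed simultaneously. I would split $\hat\Sigma^0-\Sigma^0$ into a sampling fluctuation $\frac1n\sum_i\bar\psi_{j,i}\bar\psi_{k,i}-\Sigma^0_{jk}$, handled by a Bernstein-type maximal inequality over the $|S|^2$ pairs using the subgaussianity of Condition A and the $\log|S|=o(n^{1/7})$ budget, plus a nuisance-induced deviation from replacing $\bar\psi$ with $\hat\psi$, which I would expand in the nuisance errors and bound through the $\ell_1/\ell_2$ rates of Condition B, the sparsity and boundedness of Condition C, and the orthogonality (\ref{ortho:prop}) that annihilates the first-order sensitivity. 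Propagating these bounds uniformly in $j,k$ and over $\mathcal{M}_n$, and verifying that the resulting product of rates is dominated by Conditions (i)--(ii), is the delicate accounting; the remaining steps---anti-concentration, quantile matching, and the replacement of $\sigma_j$ by $\hat\sigma_j$---are then routine.
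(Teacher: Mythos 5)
Your proposal follows essentially the same route as the paper's proof: reduction to the maximum statistic via the linear representation of Theorem \ref{thm:main}, a high-dimensional Gaussian approximation for $\max_{j\in S}|n^{-1/2}\sum_i\bar\psi_j|$ (the paper's Lemma \ref{cor:gaussapprox}, via Theorem 3.1 of \cite{chernozhukov2015noncenteredprocesses}), uniform control of $\hat\sigma_j/\sigma_j$ at a rate $\varepsilon_n$ driven by Condition (i), and validation of the multiplier bootstrap by coupling $\max_{j\in S}|\widehat{\mathcal{G}}_j|$ to the limiting Gaussian maximum under the moment budget of Condition (ii), finishing with anti-concentration and quantile matching. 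The only (immaterial) difference is bookkeeping in the bootstrap leg: where you push everything through a single bound on $\|\hat\Sigma^0-\Sigma^0\|_\infty$ and a Gaussian comparison of quantiles, the paper splits at the process level --- the plug-in discrepancy $\hat\psi_j-\bar\psi_j$ is absorbed as a conditionally Gaussian maximum controlled by its empirical $L_2$ norm, and only the sampling fluctuation $\max_{j,k}|\En[\bar\psi_j\bar\psi_k]-\Ep[\bar\psi_j\bar\psi_k]|$ enters the comparison/conditional-Strassen coupling step.
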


Theorem \ref{thm:inference} establishes the asymptotic validity of the confidence regions. The results are uniformly valid across models that satisfy the stated conditions. In particular, we allow for (sequence of) models where model selection mistakes are unavoidable.

\subsection{Estimated Covariance of Error-in-measurement}\label{sec:estimatedGamma}

In this section we discuss the case in which the covariance matrix $\Gamma$ is diagonal but unknown. In this case we follow the literature that assumes the availability of an estimator $\hat\Gamma$. The following condition summarizes the properties of such estimator. Recall that we denote by $\Delta_n$ a (fixed) sequence going to zero.

~\\

{\bf Condition D.} {\it With probability $1-\Delta_n$ the estimator $\hat\Gamma$ is a diagonal matrix and satisfies
$\|\hat\Gamma-\Gamma\|_\infty \leq C\sqrt{\log(np)/n}$. }

~\\

Condition D is standard and it is satisfied in a variety of applications, e.g. \cite{RT1}. This condition will suffice for us to derive a new linear representation that accounts for the use of an estimate of $\Gamma$. Thus the proposed algorithm uses $\hat\Gamma$ instead of $\Gamma$ in Algorithm \ref{algo:1}.

\begin{algorithm}[Estimation based on estimated $\Gamma$]\label{algo:2} ~\\
\noindent  \emph{Step 1}. Compute an estimator $\hat\beta$ of $\beta_0$ in (\ref{model}) via regularization.\\
For each $j \in S \subset \{1,\ldots,p\}$: \\
\enspace \emph{Step 2}. Compute an estimator $\hat\mu^j$ of $\mu_0^j$ in (\ref{aux:model}) via regularization.\\
\enspace \emph{Step 3}. For $\hat \Sigma_j :=  \frac{1}{n}\sum_{i=1}^n (z_{ij}-z_{i,-j}^T\hat\mu^j)z_{ij} - \hat\Gamma_{jj} $, compute $\check \beta_j$ as \\
$$ \check\beta_j := \frac{\hat \Sigma_j^{-1}}{n}\sum_{i=1}^n(z_{ij}-z_{i,-j}^T\hat\mu^j)(y_i-z_{i,-j}^T\hat\beta_{-j})-(\hat \mu^j)^T\hat \Gamma_{-j,-j}\hat\beta_{-j}$$

\end{algorithm}

Next we state our main results of this section.

\begin{theorem}\label{thm:mainEst}
Suppose that Conditions A, B,  C, and D hold. Then, uniformly over $j\in S$, the estimator based on Algorithm \ref{algo:2} satisfies
$$\begin{array}{rl}
\displaystyle \sqrt{n}\sigma_j^{-1}(\check\beta_j-\beta_{0j}) & \displaystyle =  \frac{1}{\sqrt{n}}\sum_{i=1}^n \bar\psi_j(y_i,z_i)+ O_\P(\sigma_j^{-1}\Sigma_j^{-1}\delta_n)\\
\\
&\displaystyle \ \ \ \   +  \sigma_j^{-1}\Sigma_j^{-1}(e^j-\mu^j_0)^T\sqrt{n}(\hat\Gamma - \Gamma)\beta_0 \end{array}$$
%$$\sqrt{n}\Sigma_j(\check\beta_j-\beta_{0j}) = \sqrt{n}(e_j-\mu^j_0)'(\hat\Gamma - \Gamma)\beta_0 + \frac{1}{\sqrt{n}}\sum_{i=1}^n \psi_j(y_i,z_i,\beta_0,\mu^j_0) + O_\P(\delta_n)$$
where $\bar\psi_j(y_i,z_i)=-\sigma_j^{-1}\Sigma_j^{-1}\psi_j(y_i,z_i,\beta_0,\mu_0^j)$.
\end{theorem}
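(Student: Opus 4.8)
The plan is to follow the argument behind Theorem~\ref{thm:main} verbatim and to isolate the single new effect: the perturbation of the score caused by using $\hat\Gamma$ in place of $\Gamma$. Since the score in (\ref{ortho:j}) is affine in $\theta$ with slope $-\hat\Sigma_j$, the minimizer produced in Step~3 of Algorithm~\ref{algo:2} satisfies the exact identity
\[
\sqrt{n}\,\sigma_j^{-1}(\check\beta_j-\beta_{0j})=\sigma_j^{-1}\hat\Sigma_j^{-1}\frac{1}{\sqrt{n}}\sum_{i=1}^{n}\psi_j^{\hat\Gamma}(y_i,z_i,\beta_{0j},\hat\eta^j),
\]
where $\psi_j^{\hat\Gamma}$ is the score (\ref{ortho:j}) with $\hat\Gamma$ substituted for $\Gamma$. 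First I would write $\psi_j^{\hat\Gamma}=\psi_j+(e^j-\hat\mu^j)^T(\hat\Gamma-\Gamma)(\beta_{0j}e^j+\hat\beta_{-j})$, where $\psi_j$ is the score with the true $\Gamma$. The correction term does not depend on $(y_i,z_i)$, so it factors out of the empirical average, and the representation splits into
\[
\sqrt{n}\,\sigma_j^{-1}(\check\beta_j-\beta_{0j})=\sigma_j^{-1}\hat\Sigma_j^{-1}\frac{1}{\sqrt{n}}\sum_{i=1}^{n}\psi_j(y_i,z_i,\beta_{0j},\hat\eta^j)+\sigma_j^{-1}\hat\Sigma_j^{-1}(e^j-\hat\mu^j)^T\sqrt{n}(\hat\Gamma-\Gamma)(\beta_{0j}e^j+\hat\beta_{-j}),
\]
which I call terms (I) and (II).

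Term (I) is, up to replacing $\hat\Gamma_{jj}$ by $\Gamma_{jj}$ inside the normalization $\hat\Sigma_j$ (an $O_\P(\sqrt{\log(np)/n})$ perturbation absorbed into the remainder), exactly the quantity analysed in Theorem~\ref{thm:main}. I would therefore invoke that theorem as a black box: the orthogonality (\ref{ortho:prop}), the rates of Condition B, the growth restriction of Condition C, and the substitution $\hat\Sigma_j^{-1}\to\Sigma_j^{-1}$ give (I)$=\frac{1}{\sqrt{n}}\sum_{i=1}^n\bar\psi_j(y_i,z_i)+O_\P(\sigma_j^{-1}\Sigma_j^{-1}\delta_n)$, with $\bar\psi_j$ as in the statement. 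No new work is needed here; the entire new content sits in term (II).

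For term (II) the goal is to show it equals $\sigma_j^{-1}\Sigma_j^{-1}(e^j-\mu_0^j)^T\sqrt{n}(\hat\Gamma-\Gamma)\beta_0$ up to $O_\P(\sigma_j^{-1}\Sigma_j^{-1}\delta_n)$, which requires three substitutions: $\hat\beta_{-j}\to\beta_{0,-j}$ (so that $\beta_{0j}e^j+\hat\beta_{-j}\to\beta_0$), $\hat\mu^j\to\mu_0^j$, and $\hat\Sigma_j^{-1}\to\Sigma_j^{-1}$. Writing $D=\hat\Gamma-\Gamma$, the first two produce the cross terms $(e^j-\hat\mu^j)^TD(\hat\beta_{-j}-\beta_{0,-j})$ and $(\hat\mu^j-\mu_0^j)^TD\beta_0$. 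Because $D$ is diagonal with $\|D\|_\infty\le C\sqrt{\log(np)/n}$ (Condition D), $\|e^j-\hat\mu^j\|_\infty=O_\P(1)$ and $\|\beta_0\|_\infty\le C$ (Condition C), and the $\ell_1$-rates of Condition B bound $\|\hat\beta-\beta_0\|_1$ and $\|\hat\mu^j-\mu_0^j\|_1$ by $C(1+\|\beta_0\|)s\sqrt{\log p/n}$ and $C(1+\|\mu_0^j\|)s\sqrt{\log p/n}$, each cross term is $O_\P(s\log(np)/n)$, so $\sqrt{n}$ times it is $O_\P(\delta_n)$ by Condition C; multiplied by $\sigma_j^{-1}\hat\Sigma_j^{-1}=O_\P(\sigma_j^{-1}\Sigma_j^{-1})$ these contribute $O_\P(\sigma_j^{-1}\Sigma_j^{-1}\delta_n)$. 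These steps are routine.

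The delicate substitution, and the main obstacle, is $\hat\Sigma_j^{-1}\to\Sigma_j^{-1}$ in front of the $\hat\Gamma$-bias factor. Unlike the analogous step inside term (I), where $\hat\Sigma_j^{-1}$ multiplies an $O_\P(1)$ average, the factor $\sqrt{n}(e^j-\mu_0^j)^TD\beta_0$ here can diverge: under Condition D alone it is only of order $\sqrt{s\log(np)}\,\|\beta_0\|$ (use the diagonality of $D$, the sparsity of $\beta_0$ to restrict to its support, and $\|e^j-\mu_0^j\|_\infty=O_\P(1)$). One therefore cannot bound $|\hat\Sigma_j^{-1}-\Sigma_j^{-1}|$ and this factor separately; instead I would establish the sharp Jacobian rate $|\hat\Sigma_j-\Sigma_j|=O_\P\big((1+\|\mu_0^j\|)s\sqrt{\log p/n}\big)$, whose dominant piece is $(\hat\mu^j-\mu_0^j)^T\frac{1}{n}\sum_i z_{i,-j}z_{ij}$, controlled via Condition B together with $\|\frac{1}{n}\sum_i z_{i,-j}z_{ij}\|_\infty=O_\P(1)$, and then bound the product $|\hat\Sigma_j-\Sigma_j|\cdot\sqrt{n}\,|(e^j-\mu_0^j)^TD\beta_0|$ jointly, feeding both the sparsity of $\beta_0$ and the full product form of Condition C into a single estimate so that the $\sqrt{n}\,\|\hat\Gamma-\Gamma\|_\infty$ factor is absorbed by the $s\log(np)/\sqrt{n}$ factor rather than multiplying a crude bound. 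Carrying out this bookkeeping so that the resulting error lands inside the $O_\P(\sigma_j^{-1}\Sigma_j^{-1}\delta_n)$ remainder is precisely where estimating $\Gamma$ forces the most care relative to Theorem~\ref{thm:main}; assembling (I), the leading part of (II), and the three substitution errors then yields the claimed representation.
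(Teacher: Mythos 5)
Your decomposition is, in substance, the one the paper uses: the exact affine identity $\hat\Sigma_j(\check\beta_j-\beta_{0j})=\frac{1}{n}\sum_{i=1}^n\psi_j^{\hat\Gamma}(y_i,z_i,\beta_{0j},\hat\eta^j)$, the splitting off of the deterministic correction $(e^j-\hat\mu^j)^T(\hat\Gamma-\Gamma)(\beta_{0j}e^j+\hat\beta_{-j})$, its reduction to $(e^j-\mu_0^j)^T(\hat\Gamma-\Gamma)\beta_0$ via two cross terms (the paper's $T_2$ and $T_5$, bounded exactly as you do using diagonality of $\hat\Gamma-\Gamma$ and Condition B), and the joint treatment of the $\hat\Sigma_j^{-1}\to\Sigma_j^{-1}$ substitution against the possibly diverging bias factor. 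The first genuine problem is your treatment of term (I): in this paper Theorem \ref{thm:main} is \emph{not} available as a black box, since its entire proof is ``apply Theorem \ref{thm:mainEst} with $\hat\Gamma=\Gamma$.'' Invoking it here inverts the logical dependency and is circular within the paper. Your plan is salvageable only if you supply a direct proof of the known-$\Gamma$ linear representation, i.e.\ the bounds the paper calls $T_1$, $T_3$, $T_4$: the orthogonality term $\frac{1}{n}\sum_{i}[(z_{ij}-z_{i,-j}^T\mu_0^j)z_{i,-j}^T+(\mu_0^j)^T\Gamma_{-j,-j}](\beta_{0,-j}-\hat\beta_{-j})$, the moment-condition term paired with $\|\hat\mu^j-\mu_0^j\|_1$, and the quadratic term $(\mu_0^j-\hat\mu^j)^T\frac{1}{n}\sum_i[z_{i,-j}z_i^T-\hat\Gamma_{-j,-j}](\beta_{0,-j}-\hat\beta_{-j})$, which needs bounded $Cs$-sparse eigenvalues of $\frac{1}{n}\sum_iz_iz_i^T$. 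That is where most of the actual work of the theorem lives; ``no new work is needed here'' is not accurate.

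Second, the step you correctly single out as the crux does not close with the rates you commit to. You take $|\hat\Sigma_j-\Sigma_j|=O_\P\big((1+\|\mu_0^j\|)s\sqrt{\log p/n}\big)$ from the $\ell_1$--$\ell_\infty$ H\"older bound, and $\sqrt{n}\,|(e^j-\mu_0^j)^T(\hat\Gamma-\Gamma)\beta_0|=O_\P\big(\sqrt{s\log(np)}\,\|\beta_0\|\big)$ from support restriction; the product is of order $(1+\|\mu_0^j\|)\|\beta_0\|\,s^{3/2}\log(np)/\sqrt{n}$, which Condition C only controls by $\sqrt{s}\,\delta_n$, not $\delta_n$. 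At least one of the two bounds must be sharpened by a $\sqrt{s}$: the paper obtains $|\hat\Sigma_j-\Sigma_j|\lesssim(1+\|\mu_0^j\|)\sqrt{s\log(pn)/n}$ via Cauchy--Schwarz, $\big|\frac{1}{n}\sum_iz_{i,-j}^T(\hat\mu^j-\mu_0^j)z_{ij}\big|\le\{\frac{1}{n}\sum_i(z_{i,-j}^T(\hat\mu^j-\mu_0^j))^2\}^{1/2}\{\frac{1}{n}\sum_iz_{ij}^2\}^{1/2}$ together with bounded sparse eigenvalues, and bounds the bias factor by $\sqrt{n}\,\|\hat\Gamma-\Gamma\|_{op}\|e^j-\mu_0^j\|\,\|\beta_0\|\lesssim\sqrt{\log(pn)}(1+\|\beta_0\|)$ with no $\sqrt{s}$, using $\|\mu_0^j\|\le C$. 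Either improvement alone makes the product $O_\P(\delta_n)$. With these two repairs your argument coincides with the paper's proof.
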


Theorem \ref{thm:mainEst} explicitly characterize the impact of using an estimate $\hat\Gamma$ of the covariance matrix $\Gamma$. This result also highlights the fact we do not have an orthogonality condition for the (nuisance) parameter $\Gamma$ as we have for $\mu^j_0$ and $\beta_{0,-j}$. In principle, the estimation error in $\hat\Gamma$ could dominate and dictate the rate of convergence. Next we state a regularity condition on $\hat \Gamma$ that is satisfied in some applications.

~\\

{\bf Condition E.} {\it $\hat\Gamma$ admits a linear representation, namely uniformly over $j\leq p$ we have
$$ \sqrt{n}(\hat \Gamma_{jj} - \Gamma_{jj}) = \frac{1}{\sqrt{n}}\sum_{i=1}^n\varphi_j(z_i) + O_\P(\delta_n)$$
for the sequence $\delta_n \to 0$ defined in Condition C.}

~\\

For applications in which Condition E also holds (see Section \ref{Example:MAR} below), we can rewrite the estimation error as a sum of zero mean terms (up to a negligible term). The following corollary of Theorem \ref{thm:mainEst} summarizes this observation.

\begin{corollary}\label{cor:estmain}
Suppose that Conditions A, B,  C,  D and E hold. Then, uniformly over $j\in S$, the estimator based on Algorithm \ref{algo:2} satisfies
\begin{equation}\label{eq:LRerror}
\begin{array}{rl}
\displaystyle \sqrt{n}\sigma_j^{-1}(\check\beta_j-\beta_{0j})  = \frac{1}{\sqrt{n}}\sum_{i=1}^n\{ \bar\psi_j(y_i,z_i)-\sigma_j^{-1}\Sigma_j^{-1}(e^j-\mu^j_0)^T\varphi(z_i)\beta_0\} \\
\\
\displaystyle + O_\P(\sigma_j^{-1}\Sigma_j^{-1}\delta_n)\end{array}\end{equation}
\end{corollary}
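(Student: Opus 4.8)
The plan is to treat Corollary~\ref{cor:estmain} as a direct consequence of Theorem~\ref{thm:mainEst}: that theorem already isolates the single extra term $\sigma_j^{-1}\Sigma_j^{-1}(e^j-\mu^j_0)^T\sqrt{n}(\hat\Gamma-\Gamma)\beta_0$ relative to the error-free representation, so the entire task is to convert this $\hat\Gamma$-dependent correction into a sample average of mean-zero summands plus a remainder of the advertised order, using the linear representation supplied by Condition~E. First I would quote the conclusion of Theorem~\ref{thm:mainEst} verbatim and declare that the leading term $\frac{1}{\sqrt n}\sum_i\bar\psi_j$ and the remainder $O_\P(\sigma_j^{-1}\Sigma_j^{-1}\delta_n)$ carry over unchanged; only the last summand needs rewriting.

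The key computation exploits that both $\Gamma$ (Condition~A(v)) and $\hat\Gamma$ (Condition~D) are diagonal, so $\hat\Gamma-\Gamma$ is diagonal and the quadratic form expands coordinatewise. Writing $\varphi(z_i)=\mathrm{diag}(\varphi_1(z_i),\dots,\varphi_p(z_i))$ and inserting Condition~E into each diagonal entry gives
\begin{equation*}
(e^j-\mu_0^j)^T\sqrt{n}(\hat\Gamma-\Gamma)\beta_0
=\sum_{k=1}^p (e^j-\mu_0^j)_k\,\beta_{0k}\,\Big(\tfrac{1}{\sqrt n}\sum_{i=1}^n\varphi_k(z_i)+r_k\Big)
=\frac{1}{\sqrt n}\sum_{i=1}^n (e^j-\mu_0^j)^T\varphi(z_i)\beta_0 + R_j,
\end{equation*}
where $r_k:=\sqrt n(\hat\Gamma_{kk}-\Gamma_{kk})-\tfrac{1}{\sqrt n}\sum_i\varphi_k(z_i)$ satisfies $\max_{k\le p}|r_k|=O_\P(\delta_n)$ by Condition~E and $R_j:=\sum_k (e^j-\mu_0^j)_k\beta_{0k} r_k$. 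Multiplying the leading part by $\sigma_j^{-1}\Sigma_j^{-1}$ and folding it into the per-observation bracket produces exactly the additional $\varphi$-summand displayed inside the sum in~(\ref{eq:LRerror}).

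It remains to show $\sigma_j^{-1}\Sigma_j^{-1}R_j=O_\P(\sigma_j^{-1}\Sigma_j^{-1}\delta_n)$ uniformly over $j\in S$. A Cauchy--Schwarz bound gives $|R_j|\le(\max_{k\le p}|r_k|)\,\|e^j-\mu_0^j\|\,\|\beta_0\|\le O_\P(\delta_n)(1+\|\mu_0^j\|)(1+\|\beta_0\|)$, after which the norm factors are controlled uniformly in $j$ by Condition~A(ii) (which yields $\max_j\|\mu_0^j\|\le C$) together with the growth restriction of Condition~C, and then absorbed into the stated remainder. The main obstacle is precisely this uniformity: since $|S|$ may exceed $n$, the remainder must be negligible simultaneously across all $j\in S$, and the argument hinges on Condition~E delivering the approximation error $O_\P(\delta_n)$ uniformly over all $p$ diagonal entries (the $\max_{k\le p}|r_k|$ control), so that a single Hölder/Cauchy--Schwarz estimate dispatches every coordinate at once rather than accumulating $p$ separate errors. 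Keeping careful track of how the factors $(1+\|\mu_0^j\|)(1+\|\beta_0\|)$ enter, and verifying they are dominated under Conditions~A and~C, is the one place where genuine care rather than routine substitution is required.
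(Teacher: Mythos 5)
Your proposal is correct and follows essentially the same route as the paper, which states Corollary \ref{cor:estmain} with no separate proof precisely because it is the immediate substitution of Condition E into the extra $\hat\Gamma$-term of Theorem \ref{thm:mainEst} that you carry out. Your explicit Cauchy--Schwarz control of the residual $R_j$, and your observation that the uniformity over $k\le p$ in Condition E is what lets a single bound cover all $j\in S$, supply more detail than the paper does and are consistent with how the paper absorbs the factors $(1+\|\mu_0^j\|)(1+\|\beta_0\|)$ in its other remainder bounds.
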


The representation (\ref{eq:LRerror}) can be used to prove the validity of a multiplier bootstrap procedure to construct valid critical values for simultaneous confidence intervals when only an estimate $\hat \Gamma$ is available.

\section{Examples}\label{sec:examples}

In this section we apply our results to specific context that generates error-in-measurements (\ref{model}).

\subsection{Graphical Model Inverse Covariance Estimation}\label{Example:GM}

In this example we consider the estimation of a Gaussian graphical model in a high-dimensional setting. \cite{meinshausen2009lasso,yuan2010high,jankova2015confidence}. It is well known that the conditional independence structure is determined by the precision matrix (inverse of the covariance matrix). In particular, conditional independence between components $(j,k)$ is characterized by a zero in the corresponding entry of the precision matrix. Another convenient way to characterize conditional independence between $(j,k)$ components is through a zero in the $k$th entry of the  vector $\theta^j$ in the linear model
$$ x_j=x_{-j}^T\theta^j_0+\varepsilon^j$$
where $\varepsilon^j$ is a vector of i.i.d. Gaussian random variables $\Ep[\varepsilon^jx^{-j}]=0$ for each $j=1,\ldots,p$. In this case, estimators with good $\ell_2$-rates of convergence for the columns of the inverse of the covariance matrix have been obtained in the literature, \cite{meinshausen2009lasso,yuan2010high}. Recently, \cite{jankova2015confidence} obtained confidence intervals for the precision matrix using de-biasing ideas related to \cite{GRD2014} using the graphical Lasso.

In the case with error-in-variables, Section 3.3 of \cite{LW} explicitly works out this case when $w$ is Gaussian showing how it can be embedded within model (\ref{model}). Furthermore, under very mild sparsity conditions, they derived rates of convergence for an estimator that combined all the estimates and projects (via $\ell_1$-minimization) into the space of symmetric matrices.

The tools we developed here allow us to consider the case that we do not fully observe the $x$ variables but instead $z=x+w$ where $w$ is subgaussian but possibly non-Gaussian. In particular, with known $\Gamma$, based on Theorem \ref{thm:main} and \ref{thm:inference}, we can directly construct simultaneous confidence intervals for all coefficients of $(\theta^j_0,j=1,\ldots,p)$. That is useful in identifying pairs that are candidates for being conditional independent (whose confidence intervals contain zero) and pairs which we are at least $1-\alpha$ confident that are not conditionally independent.

\subsection{Missing Values at Random}\label{Example:MAR}

In this example we are interested on the model (\ref{model}) where the additive error-in-measurements represents missing data and $p$ increasing. We follow the framework discussed in \cite{RT2}.

We observe $(y_i,\tilde z_{i}, \eta_i, i=1,\ldots,n)$ where
$$ \tilde z_{ij} = x_{ij}\eta_{ij}, \ \ \mbox{with} \ \eta_{ij} \ \mbox{i.i.d. Bernoulli with parameter} \ 1-\pi.$$ When $\eta_{ij}=0$ it indicates that we are missing the observation $x_{ij}$. We cast this into our setting by writing
$$z_{ij} = x_{ij}+ w_{ij}, \ \ \mbox{where} \ z_{ij}=\frac{{\tilde z}_{ij}}{1-\pi} \ \ \mbox{and} \ \ w_{ij}=x_{ij}\frac{\eta_{ij}-(1-\pi)}{1-\pi}.$$
In this case we have $\Gamma_{jj} = \Ep[ w_{ij}^2] = \Ep[ x_{ij}^2] \frac{\pi}{1-\pi} = \Ep[\tilde z_{j}^2] \frac{\pi}{(1-\pi)^2}$ and the estimator $\hat\Gamma_{jj}$ is given by
$$ \hat\Gamma_{jj} =\frac{1}{n}\sum_{i=1}^n \tilde z_{ij}^2 \frac{\hat\pi}{(1-\hat\pi)^2}$$
where $\hat \pi= \frac{1}{np}\sum_{i=1}^n \sum_{j=1}^p1\{\eta_{ij}=0\}$ is a consistent estimate of $\pi$. In fact since $\hat \pi = \pi + O_\P( (np)^{-1/2} )$. It follows that uniformly over $j=1,\ldots,p$
$$\begin{array}{rl}
\sqrt{n}(\hat\Gamma_{jj}-\Gamma_{jj}) & = \frac{1}{\sqrt{n}}\sum_{i=1}^n \left\{\tilde z_{ij}^2 \frac{\pi}{(1-\pi)^2} - \Ep[\tilde z_{ij}^2]\frac{\pi}{(1-\pi^2)}\right\} \\
& + \frac{1}{\sqrt{n}}\sum_{i=1}^n \tilde z_{ij}^2 \left\{ \frac{\hat\pi}{(1-\hat\pi)^2}- \frac{\pi}{(1-\pi)^2} \right\} \\
& = \frac{1}{\sqrt{n}}\sum_{i=1}^n \varphi_j(\tilde z_{ij}) + O_\P\left(p^{-1/2}\max_{j\leq p}\frac{1}{n}\sum_{i=1}^n \tilde z_{ij}^2\right).\\
\end{array}
$$
It follows that under mild conditions $\max_{j\leq p}\frac{1}{n}\sum_{i=1}^n \tilde z_{ij}^2\leq C$ with probability $1-o(1)$.

Furthermore note that using the estimate $$\hat \varphi_j(\tilde z_i) = \{\tilde z_{ij}^2 - (\mbox{$\frac{1}{n}\sum_{i=1}^n\tilde z_{ij}^2)$}\}\frac{\hat\pi}{(1-\hat\pi)^2}$$ of the score function $\varphi_j(\tilde z_i)$ has negligible impact in the multiplier bootstrap procedure. Indeed, we have
$$\begin{array}{ll}
 \displaystyle\max_{j\leq p} \left|\frac{1}{\sqrt{n}}\sum_{i=1}^n g_i \{ \hat \varphi_j(\tilde z_i) - \varphi_j(\tilde z_i) \} \right|& \\
  \displaystyle =  \max_{j\leq p}\left| \frac{1}{\sqrt{n}}\sum_{i=1}^n g_i \{ \mbox{$(\frac{1}{n}\sum_{i=1}^n\tilde z_{ij}^2)$} - \Ep[\tilde z_{ij}^2]\} \frac{\hat\pi}{(1-\hat\pi)^2} \right.\\
\left.  + \max_{j\leq p} \frac{1}{\sqrt{n}}\sum_{i=1}^n g_i \{ \tilde z_{ij}^2 -\Ep[\tilde z_{ij}^2] \} \left\{\frac{\hat\pi}{(1-\hat\pi)^2}-\frac{\pi}{(1-\pi)^2}\right\} \right| \\
  \displaystyle \leq  \max_{j\leq p}| \mbox{$(\frac{1}{n}\sum_{i=1}^n\tilde z_{ij}^2)$} - \Ep[\tilde z_{ij}^2]| \left| \frac{1}{\sqrt{n}}\sum_{i=1}^n g_i \right| \frac{\hat\pi}{(1-\hat\pi)^2}\\
  + \left|\frac{1}{\sqrt{n}}\sum_{i=1}^n g_i \{ \tilde z_{ij}^2 -\Ep[\tilde z_{ij}^2] \} \right| \left|\frac{\hat\pi}{(1-\hat\pi)^2}-\frac{\pi}{(1-\pi)^2}\right| \\
= O_\P(\max_{j\leq p} \mbox{$\{\frac{1}{n}\sum_{i=1}^n\tilde z_{ij}^4\}^{1/2}$})  \sqrt{\frac{\log (pn)}{n}} \{ 1 + 1/\sqrt{p}\}
    \end{array} $$

Therefore, the estimator $\hat\Gamma$ satisfies Condition D and E and we are in position to apply Theorem \ref{thm:mainEst} and Corollary \ref{cor:estmain} provided the other regularity conditions hold.

\section{Numerical Simulations}\label{sec:simulations}

In this section we illustrate the finite sample performance of the inference methodology. We begin with the estimators as described in Algorithm \ref{algo:1} and the pointwise confidence region based on (\ref{region:one}). Then we proceed to investigate the performance of simultaneous confidence regions based on (\ref{region:simultaneous}) and (\ref{def:mb}). Data are simulated from the a high-dimensional linear regression model with error in measurements as described in (\ref{model}). The random variables $\xi_i,$ $x_i,$ and $w_i$ are independent and $\xi_i\sim N(0,\sigma_{\xi}^2),$ $w_i\sim N(0,\sigma^2_{w}I_{p\times p}),$ $x_i\sim  N(0,\Omega)$ where $I_{p\times p}$ is an identity matrix and $\Omega$ is a $p\times p$ matrix with components $\Omega_{ij}=0.5^{|i-j|}.$ We set $\sigma_{\xi}=1,$ and consider three possible choices of $\sigma_{w}=0.25,0.5,1.$ For simplicity, we assume $\sigma_w$ is known. All results are based on 500 replications each with a sample size $n=350.$

We implement the estimation Steps 1 and 2 of Algorithm (\ref{algo:1}) by the conic estimator described in \cite{BRT2014}. The tuning parameters of this estimator are set to $\tau=\mu=\sqrt{\log(p/.05)/n}$ in the notation described in \cite{BRT2014}. The inference results with the proposed algorithm are referred to as `EIV-inference' in the following. To illustrate the effect of measurement error, we also implement Algorithm 1 disregarding measurement error, i.e., estimation Steps 1 \& 2 are implement via ordinary lasso with the observed variables $z,y,$ and Step 3 is implemented assuming $\sigma_w=0.$ These results will be referred to as `Naive inference' in the sequel. All computations are performed in R. High dimensional optimizations are carried out by the package \verb"Rmosek". One dimensional optimization of Step 3 of Algorithm 1 is implemented by the built in `optimize` function. All estimates are truncated at $10^{-7}.$

We examine the two main inference results of this paper. First, inference of a single dimensional target parameter. For this purpose, the first component $\beta_{01}$ of the parameter vector $\beta_0$ is assumed to be the target over which inference is to be performed. The nuisance part of this vector is set to satisfy $\|\beta_{0,-1}\|_0=5$ and all non zero components of this nuisance vector are set to $\beta_{0j}=1$, for  $j \in \{6,7,8,9,10\}.$ We test three cases of the target parameter, $H_0:$ $\beta_{01}=0,$ $\beta_{01}=0.5$ and $\beta_{01}=1.$ We compute the type 1 error: relative frequency of the number of times $H_0$ is rejected when $H_0$ is true and the bias in the estimate. The results are reported in Table \ref{t1}, Table \ref{t2} and Table \ref{t3}.

\begin{table}[ht!]
\centering
\begin{tabular}{lccccccc}
\hline
\multicolumn{2}{l}{\textbf{$H_0\,:\,\beta_{01}=1$}}  & \multicolumn{2}{c}{\textbf{$\sigma_w=1$}} & \multicolumn{2}{c}{\textbf{$\sigma_w=0.5$}} & \multicolumn{2}{c}{\textbf{$\sigma_w=0.25$}} \\ \hline
\textbf{Method}                 & $p$   & \textbf{Size}       & \textbf{Bias}      & \textbf{Size}        & \textbf{Bias}       & \textbf{Size}        & \textbf{Bias}        \\ \hline
\multirow{3}{*}{\textbf{EIV-inference}}   & \textbf{300} & 0.052               & 0.0023             & 0.066                & 0.0096              & 0.058                & 0.0015               \\
                                & \textbf{400} & 0.06                & 0.0134             & 0.058                & 0.0129              & 0.054                & 0.0022               \\
                                & \textbf{500} & 0.068               & 0.0004             & 0.034                & 0.0004              & 0.040                & 0.0067               \\ \hline
\multirow{3}{*}{\textbf{Naive-inference}} & \textbf{300} & 1                   & -0.5377            & 0.992                & -0.3650             & 0.834                & -0.2257              \\
                                & \textbf{400} & 1                   & -0.5349            & 0.996                & -0.3638             & 0.846                & -0.2253              \\
                                & \textbf{500} & 1                   & -0.5439            & 0.996                & -0.3695             & 0.836                & -0.2216              \\ \hline
\end{tabular}
\caption{\small{Simulation results for $H_0:\,\beta_{1}=1.$ For each method we report the type I error (Size) of the corresponding test and average bias (Bias) in the refitted estimates of the target parameter.}}
\label{t1}
\end{table}

\begin{table}[ht!]
\centering
\begin{tabular}{lccccccc}
\hline
\multicolumn{2}{l}{$H_0\,:\,\beta_{01}=0.5$}         & \multicolumn{2}{c}{$\sigma_w=1$}                                       & \multicolumn{2}{c}{$\sigma_w=0.5$}                                       & \multicolumn{2}{c}{$\sigma_w=0.25$}                                       \\ \hline
\textbf{Method}                 & $p$   & \multicolumn{1}{c}{\textbf{Size}} & \multicolumn{1}{c}{\textbf{Bias}} & \multicolumn{1}{c}{\textbf{Size}} & \multicolumn{1}{c}{\textbf{Bias}} & \multicolumn{1}{c}{\textbf{Size}} & \multicolumn{1}{c}{\textbf{Bias}} \\ \hline
\multirow{3}{*}{\textbf{EIV-inference}}   & \textbf{300} & 0.048                             & -0.0048                           & 0.064                             & -0.0008                           & 0.058                             & 0.0021                            \\
                                & \textbf{400} & 0.038                             & 0.0105                            & 0.07                              & -0.0006                           & 0.046                             & -0.0043                           \\
                                & \textbf{500} & 0.036                             & -0.0061                           & 0.082                             & -0.0025                           & 0.060                             & 0.0000                            \\ \hline
\multirow{3}{*}{\textbf{Naive-inference}} & \textbf{300} & 0.932                             & -0.2751                           & 0.666                             & -0.1840                           & 0.312                             & -0.1087                           \\
                                & \textbf{400} & 0.942                             & -0.2698                           & 0.696                             & -0.1837                           & 0.358                             & -0.1147                           \\
                                & \textbf{500} & 0.972                             & -0.2804                           & 0.684                             & -0.1863                           & 0.338                             & -0.1121                           \\ \hline
\end{tabular}
\caption{\small{Simulation results for $H_0:\,\beta_{1}=0.5.$ For each method we report the type I error  (Size) of the corresponding test and average bias (Bias) in the refitted estimates of the target parameter.}}
\label{t2}
\end{table}

\begin{table}[ht!]
\centering
\begin{tabular}{lccccccc}
\hline
\multicolumn{2}{l}{$H_0\,:\,\beta_{01}=0$}         & \multicolumn{2}{c}{$\sigma_w=1$}                                       & \multicolumn{2}{c}{$\sigma_w=0.5$}                                       & \multicolumn{2}{c}{$\sigma_w=0.25$}                                       \\ \hline
\textbf{Method}                 & $p$   & \multicolumn{1}{c}{\textbf{Size}} & \multicolumn{1}{c}{\textbf{Bias}} & \multicolumn{1}{c}{\textbf{Size}} & \multicolumn{1}{c}{\textbf{Bias}} & \multicolumn{1}{c}{\textbf{Size}} & \multicolumn{1}{c}{\textbf{Bias}} \\ \hline
\multirow{3}{*}{\textbf{EIV-inference}}   & \textbf{300} & 0.042                             & 0.0178                            & 0.054                             & 0.0046                            & 0.054                             & 0.0058                            \\
                                & \textbf{400} & 0.040                             & 0.0138                            & 0.046                             & 0.0211                            & 0.066                             & 0.0202                            \\
                                & \textbf{500} & 0.052                             & -0.0047                           & 0.054                             & -0.0003                           & 0.046                             & 0.0029                            \\ \hline
\multirow{3}{*}{\textbf{Naive-inference}} & \textbf{300} & 0.060                             & 0.0123                            & 0.050                             & 0.0044                            & 0.050                             & 0.0041                            \\
                                & \textbf{400} & 0.060                             & 0.0055                            & 0.048                             & 0.0136                            & 0.068                             & 0.0151                            \\
                                & \textbf{500} & 0.066                             & 0.0004                            & 0.054                             & 0.0006                            & 0.050                             & 0.0026                            \\ \hline
\end{tabular}
\caption{\small{Simulation results for $H_0:\,\beta_{1}=0.$ For each method we report the type I error  (Size) of the corresponding test and average bias (Bias) in the refitted estimates of the target parameter.}}
\label{t3}
\end{table}

Next we illustrate the construction of simultaneous confidence regions for a multi-dimensional parameter vector. Here we test the hypothesis $H_0:\beta_{0k}=0, k\in\{1,\ldots,10\}$. The parameter vector is set to satisfy $\|\beta_0\|_0=5$ with the non zero components set to $\beta_{0j}=1$ for  $j \in \{16,17,18,19,20\}.$ In Table \ref{t4} we report the family wise error rate (FWER), relative frequency of the number of times $H_0$ is rejected when $H_0$ is true.

Numerical results support our theoretical findings. The proposed estimator based on Algorithm \ref{algo:1} (`EIV-inference') provides control on the type 1 error of the test at the significance level of 0.05 at all considered settings. In comparison, disregarding measurement error leads to a severely inflated type 1 error. We also observe the classical effect of disregarding measurement error in the refitted estimates, i.e., estimates are biased towards zero. This is illustrated by a negative bias in the case of `Naive-inference.' Lastly, for the case of inference over multiple target parameters, `EIV-inference' provides the desired control on the FWER.

\begin{table}[ht!]
\centering
\label{my-label}
\begin{tabular}{ccccc}
\hline
$n=350$ & \multicolumn{1}{c}{\textbf{$\sigma_w=1$}}                                       & \multicolumn{1}{c}{\textbf{$\sigma_w=0.5$}}                                     \\ \hline
$p$     & \multicolumn{1}{c}{\textbf{FWER}}  & \multicolumn{1}{c}{\textbf{FWER}}  \\ \hline
\textbf{300}     & 0.058                                     & 0.040                                                    \\
\textbf{400}     & 0.046                                        & 0.058                                                         \\ \hline
\end{tabular}
\caption{\small{Simulation results of `EIV-inference' for $H_0:\,\beta_{0k}=0, k\in\{1,\ldots,10\}.$ For each method we report the family wise error rate (FWER).}}
\label{t4}
\end{table}

%\clearpage

\begin{appendix}

\section{Proofs of Section 3}

\begin{proof}[Proof of Theorem \ref{thm:main}]

The linear representation result follows from  Theorem \ref{thm:mainEst} with $\hat\Gamma = \Gamma$ so that $\|\Gamma-\hat \Gamma\|_\infty =0$.

\end{proof}

\begin{proof}[Proof of Corollary \ref{thm:Linfty}]
For notational convenience we use $\En[\cdot]=\frac{1}{n}\sum_{i=1}^n[\cdot]_i$. By Theorem \ref{thm:main} we have for all $1\leq j\leq p$
$$ \sqrt{n}\sigma_j^{-1}(\check\beta_j-\beta_{0j}) = \frac{1}{\sqrt{n}}\sum_{i=1}^n \bar\psi_j(y_i,z_i) + O_\P(\delta_n).$$
Since $\delta_n \to 0$ given in Condition C, with probability $1-o(1)$ we have
$$\frac{\|\check\beta - \beta_0\|_\infty}{\max_{j\leq p}\sigma_j}  \leq C\max_{j\leq p}\frac{1}{n}\sum_{i=1}^n \bar\psi_j(y_i,z_i) + O(\delta_n^{1/2}/\sqrt{n})$$
Since $\Ep[ \bar\psi_j(y,z)]=0$ and $\Ep[ \bar\psi_j^2(y,z) ]=1$, defining the event $$\mathcal{E}_{y,z}=\{\max_{j\leq p}\En[\bar\psi_j^2(y,z)]^{1/2}\leq 2\}$$ we have by symmetrization (Lemma 2.3.7 of \cite{van1996weak}), provided $t\geq 4/\sqrt{n}$,
$$\begin{array}{rl}
P\left( \max_{j\leq p}\En[\bar\psi_j(y,z)] > t \right) & \leq 4 P\left( \max_{j\leq p}\En[ r\bar\psi_j(y,z)] > t/4 \right)\\
& \leq 4 \Ep[ P\left( \max_{j\leq p}\En[r\bar\psi_j(y,z)] > t/4 \mid \mathcal{E}_{y,z}\right)] \\
& + 4P\left( \mathcal{E}_{y,z}^c \right) \\
\end{array}
$$
where $(r_i)_{i=1}^n$ are i.i.d. Rademacher random variables independent of the data. Since $\Ep[\En[r^2\bar\psi_j^2(y,z)\mid (y_i,z_i)_{i=1}^n]=\En[\bar\psi_j^2(y,z)]$, and conditionally on $\mathcal{E}_{y,z}$
$$\begin{array}{rl}
\Ep[\max_{j\leq p}\left|\En[r\bar\psi_j(y,z)]\right| \mid (y_i,z_i)_{i=1}^n] & \leq C \sqrt{\frac{\log 2p}{n}} \max_{j\leq p} \En[\bar\psi_j^2(y,z)]^{1/2} \\
& \leq 2C \sqrt{\frac{\log 2p}{n}}\end{array}$$ by Corollary 2.2.8 in \cite{vdV-W}.
Therefore, for $t/4:= C\sqrt{\frac{\log 2p}{n}} + 2\sqrt{\frac{2\log(1/\varepsilon)}{n}}$ we have that
$$\begin{array}{rl}
P\left( \max_{j\leq p}\En[\bar\psi_j(y,z)] > t \right) & \leq 4 P\left( \max_{j\leq p}\En[ r\bar\psi_j(y,z)] > t/4 \right)\\
& \leq 4 \varepsilon + 4P\left( \mathcal{E}_{y,z}^c \right) \\
& = 4\varepsilon + o(1)\\
\end{array}
$$
where the last line follows from Step 2 below. Therefore, with probability $1-o(1)$
\begin{equation}\label{bound:sum_psi_bar}
\max_{1\leq j\leq p} \left|\frac{1}{n}\sum_{i=1}^n\bar\psi_j(y_i,z_i) \right| \leq C\sqrt{\frac{\log(pn)}{n}}
\end{equation}

The result follows by bounding $ \max_{j\leq p}\sigma_j $ from above. Indeed we have
$ \max_{j\leq p}\sigma_j \leq C (1 + \|\beta_0\|) $ since $\sigma_j^2=\Sigma_j^{-2}\Ep[\psi_j^2(y,z)]$ where $\Sigma_j$ is bounded away from zero by $\Ep[xx^T]$ having eigenvalues bounded away from zero by Condition A, and $\Ep[\psi_j^2(y,z)]\leq C(1+\|\beta_0\|)^2$ since $\|\Gamma\|_{op}$ is bounded and $\max_{j\leq p}\|\mu_0^j\| \leq C'$. Indeed we have
$$\begin{array}{rl}
\Ep[x_j^2] & \geq \Ep[ (x_j-x_{-j}^T\mu^j_0)^2] = (e^j-\mu^j_0)^T \Ep[xx^T] (e^j-\mu^j_0) \\
& \geq \|\mu^j_0\|^2 {\rm mineig}(\Ep[xx^T])\end{array}$$ since the support of $e^j$ and $\mu^j_0$ is disjoint by construction. Therefore, \begin{equation}\label{bound:muL2norm}
\|\mu^j_0\| \leq  \{ \Ep[x_j^2] / {\rm mineig}(\Ep[xx^T]) \}^{1/2} \leq C'.\end{equation}

Step 2. Since $\Ep[\bar\psi_j^2(y,z)]=1$ we have
$$\begin{array}{rl}
P(\mathcal{E}_{y,z})& = P( \max_{j\leq p}\En[\bar\psi_j^2(y,z)] \leq 4 ) \\
& \leq P( \max_{j\leq p}|\En[\bar\psi_j^2(y,z)]-\Ep[\bar\psi^2(y,z)]| \geq 3 ) \\
& \leq \Ep[\max_{j\leq p}|\En[\bar\psi_j^2(y,z)]-\Ep[\bar\psi^2(y,z)]|]\\
& \leq C \frac{M_2\log p}{n}+ C\sqrt{\frac{\log p}{n}}M^{1/2}_2
\end{array} $$
where we used Markov's inequality and Lemma \ref{lem:m2bound} with $X_{ij}=\bar\psi_j(y_i,z_i)$ and $k=2$. In this case, by Lemma \ref{lem:subgaussian} we have $$\begin{array}{rl}
M_2 & = \Ep[ \max_{i\leq n,j\leq p} \sigma_j^{-2}\Sigma_j^{-2} |(\xi_i-w_i^T\beta_0)(e^j-\mu^j_0)^Tz_i+ \Gamma_{jj} \beta_{0j})|^2] \\
& \leq C(1+\|\beta_0\|)^2\log (n) \max_{j\in S}\sigma_j^{-2}\Sigma_j^{-2}(1+\|\mu^j_0\|)^2 \log(pn)\\
 & + C\max_{j\in S}\sigma_j^{-2}\Sigma_j^{-2}|\Gamma_{jj} \beta_{0j}|^2\\
& \leq C'(1+\|\beta_0\|)^2\log (n) \max_{j\in S}\sigma_j^{-2}\Sigma_j^{-2}(1+\|\mu^j_0\|)^2 \log(pn) \end{array}$$
where we used that $\max_{j\in S}\sigma_j^{-2}\Sigma_j^{-2}|\Gamma_{jj} \beta_{0j}|^2\leq C\|\beta_0\|^2$ under Condition A. Thus, provided that $(1+\|\beta_0\|) \log (pn) \log^{1/2} n \leq \delta_n \sqrt{n}$ where $\delta_n\to 0$, we have $P(\mathcal{E}_{y,z})=o(1)$.

\end{proof}

\begin{proof}[Proof of Corollary \ref{cor:PointwiseInference}]
The result follows directly from the linear representation of Theorem \ref{thm:main} combined with an application of the Lyapunov central limit theorem since $\bar\psi_j(y_i,z_i)$, $i=1,\ldots,n$, are independent random variables with  $\Ep[\bar\psi_j(y,z)]=0$ and $\Ep[\bar\psi_j^2(y,z)]=1$. To verify the last condition, note that for any $m\geq 2$, we have
\begin{equation}\label{barpsi:mmoment}\begin{array}{rl}
\Ep[|\bar\psi_j(y,z)|^m] & \leq \tilde C^m \{ \Ep[|z_j\xi|^m]+\Ep[|z_jw^T\beta_0|^m]+\|\Gamma\|_\infty^m\|\beta_0\|^m\\
& +\Ep[|z_{-j}^T\mu_0^j\xi|^m]+\Ep[|z_{-j}^T\mu_0^jw^T\beta_0|^m]\\
& +\|\mu^j_0\|^m \|\beta_0\|^m  \|\Gamma\|_\infty^m\}\\
& \leq (1+C)^m \tilde C^m C_{2m}^{2m} (1+\|\beta_0\|)^m(1+\|\mu^j_0\|)^m
\end{array}\end{equation} since $\|\Gamma\|_\infty\leq C$ by Condition A, and by Lemma \ref{lem:subgaussian}(1) since $z$, $w$, and $\xi$ are subgaussian random variables,  where $C_{2m} = C' \sqrt{2m}$ for some universal $C'$. Therefore, by applying Lemma \ref{lem:auxmoment} with $k=3$, $B_m= (1+C)\tilde CC_{2m}^2$ and $\gamma=(1+\|\beta_0\|)(1+\|\mu^j_0\|)$, we have $\Ep[|\bar\psi_j(y,z)|^3]\leq B_{2(1+\delta)/\delta}^{1+\delta}(1+\|\beta_0\|)^{1+\delta}(1+\|\mu^j_0\|)^{1+\delta}$ for any $\delta \in (0,2]$ where $B_{2(1+\delta)/\delta}^{1+\delta} = \{ (1+C) \tilde CC_{4(1+\delta)/\delta}^2\}^{(1+\delta)} \leq C'' (1/\delta)^{(1+\delta)}$.
Thus we have by Condition C that
$$\begin{array}{rl}
 \frac{1}{n^{3/2}}\sum_{i=1}^n\Ep[|\bar\psi_j(y,z)|^3] & \leq \frac{B_{2(1+\delta)/\delta}^{1+\delta}(1+\|\beta_0\|)^{1+\delta}(1+\|\mu^j_0\|)^{1+\delta}}{\sqrt{n}}\\
 & =o\left( \frac{(1+\|\beta_0\|)^{\delta}(1+\|\mu^j_0\|)^{\delta}}{\delta^{(1+\delta)}\log(pn)}\right)\end{array}$$
It suffices to show that the argument in the little-$o$ term is bounded for a suitable choice of $\delta\to 0$. Condition C also implies that $\|\beta_0\|\leq C\sqrt{n}$ and  (\ref{bound:muL2norm}) implies $\|\mu^j_0\|\leq C$. Therefore it suffices to show that for some $\delta \to 0$ we have
$ n^{\delta/2} \leq C$ and $\delta^{(1+\delta)}\log(pn) \geq c$. Indeed, setting $\delta = 1/\log n$ we have $n^{\delta/2}=e^{1/2}$ and $\delta^{(1+\delta)}\log(pn) \geq  \{ \log^{1+ 1/\log n} n\}^{-1} \log n  =  1/\{ \log n\}^{1/\log n} \geq e^{-1}$ for $n\geq e^e$.

%Indeed, by Condition C we have that
%$$ \frac{1}{n^{3/2}}\sum_{i=1}^n\Ep[|\bar\psi_j(y,z)|^3] \leq C(1+\|\beta_0\|)(1+\|\mu^j_0\|)/\sqrt{n} \to 0.$$
\end{proof}

Next we show that the distribution of the maximum estimation error is close to the distribution of the maximum of the entries of (a sequence of) Gaussian vectors. Let $(\mathcal{G}_j)_{j\in S}$ denote a tight zero-mean Gaussian vector whose dimension could grow with $n$. Its covariance matrix is given by $(\Ep[\bar \psi_j(y,z)\bar\psi_k(y,z)])_{j,k}, j,k\in S$. We have the following lemma.
\begin{lemma}\label{cor:gaussapprox}
Suppose that Conditions A, B and C hold,  $S\subseteq\{1,\ldots,p\}$, $|S|\geq 2$, and
\begin{itemize}
\item[(i)] $\max_{j\in S} \{\sigma_j^{-1}\Sigma_{j}^{-1}(1+\|\beta_0\|)(1+\|\mu_0^j\|)\}^4\log^7(|S|) = o(n)$, and
\item[(ii)] $\max_{j\in S} \sigma_j^{-1}\Sigma_{j}^{-1}\sqrt{\log |S|}\delta_n = o(1)$
\end{itemize}
where $\delta_n$ is as defined in Condition C. Then we have that
$$ \sup_{t \in \mathbb{R}} \left|P\left(\max_{j\in S}|\sqrt{n}\sigma_j^{-1}(\check\beta_{0j}-\beta_{0j})|\leq t\right)- P\left(\max_{j\in S}|\mathcal{G}_j|\leq t\right) \right|=o(1)$$
\end{lemma}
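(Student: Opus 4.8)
The plan is to combine the uniform linear representation of Theorem~\ref{thm:main} with a high-dimensional central limit theorem for maxima of sums of independent random vectors, in the spirit of \cite{chernozhukov2013gaussian,chernozhukov2015noncenteredprocesses}. Write $T_j := \sqrt n\,\sigma_j^{-1}(\check\beta_j-\beta_{0j})$ and $W_j := \frac{1}{\sqrt n}\sum_{i=1}^n \bar\psi_j(y_i,z_i)$. By Theorem~\ref{thm:main}, $T_j = W_j + R_j$ uniformly over $j\in S$, with $\max_{j\in S}|R_j| = O_\P(a_n)$ where $a_n := \max_{j\in S}\sigma_j^{-1}\Sigma_j^{-1}\delta_n$. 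The terms $\bar\psi_j(y_i,z_i)$ are i.i.d.\ across $i$, mean-zero, and of unit variance (since $\Ep[\bar\psi_j^2]=1$), so $(W_j)_{j\in S}$ is a normalized sum of independent random vectors whose covariance coincides exactly with that of $(\mathcal{G}_j)_{j\in S}$; in particular no covariance-matching error arises. The proof therefore splits into (a) a Gaussian approximation for $\max_{j\in S}|W_j|$, and (b) a demonstration that replacing $W_j$ by $T_j$ perturbs the law of the maximum by $o(1)$.

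For step (a) I would first double the index set, replacing $(\bar\psi_j)_{j\in S}$ by $(\bar\psi_j,-\bar\psi_j)_{j\in S}$, so that the two-sided maximum $\max_{j\in S}|W_j|$ becomes a one-sided maximum over $2|S|$ coordinates; this only replaces $\log|S|$ by $\log(2|S|)$ and leaves the covariance intact. The moment hypotheses of the CLT are then verified from the bounds already derived in the proof of Corollary~\ref{cor:PointwiseInference}: inequality~(\ref{barpsi:mmoment}), keeping the $\sigma_j^{-1}\Sigma_j^{-1}$ normalization, shows that the relevant fourth-moment envelope $B_n$ is of order $\max_{j\in S}\sigma_j^{-1}\Sigma_j^{-1}(1+\|\beta_0\|)(1+\|\mu_0^j\|)$, while the subgaussianity in Condition~A together with Lemma~\ref{lem:subgaussian} controls the maximal moment $\Ep[\max_{i\le n,\,j\in S}|\bar\psi_j(y_i,z_i)|^4]$ up to the usual $\log n$ factors. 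With this identification of $B_n$, condition~(i), namely $B_n^4\log^7|S|=o(n)$, is exactly the scaling under which the CLT yields $\sup_t|P(\max_{j}W_j\le t)-P(\max_j \mathcal{G}_j\le t)|\to 0$.

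For step (b) I would invoke Gaussian anti-concentration. For any $\e>0$ one has $P(\max_{j\in S}|T_j|\le t)\le P(\max_{j\in S}|W_j|\le t+\e) + P(\max_{j\in S}|R_j|>\e)$, together with the symmetric reverse bound. Choosing $\e = a_n\omega_n$ with $\omega_n\to\infty$ slowly enough that $P(\max_{j\in S}|R_j|>\e)\to 0$, it remains to bound $\sup_t P(t<\max_{j\in S}|W_j|\le t+\e)$. By step (a) this is, up to $o(1)$, the concentration function of $\max_{j\in S}|\mathcal{G}_j|$, which by Nazarov's inequality is at most $C\e\sqrt{\log|S|}$ because the $\bar\psi_j$ have unit variance. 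Condition~(ii), $\max_{j\in S}\sigma_j^{-1}\Sigma_j^{-1}\sqrt{\log|S|}\,\delta_n=o(1)$, gives $a_n\sqrt{\log|S|}\to 0$, so $\omega_n$ can be taken to diverge while still $\e\sqrt{\log|S|}\to 0$, rendering this term $o(1)$.

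The main obstacle is the careful verification of the CLT's moment conditions and the precise matching of the effective envelope $B_n$ to the quantity in condition~(i); in particular, controlling the maximal fourth moment over $i\le n$ and $j\in S$ forces the subgaussian tail bounds of Condition~A into play and produces the $\log n$ factors that condition~(i) must absorb. A secondary point is ordering: the anti-concentration in step (b) is only legitimate once step (a) is in hand, so $\e$ must be calibrated simultaneously against the stochastic order of $\max_{j}|R_j|$ and the $\sqrt{\log|S|}$ anti-concentration rate.
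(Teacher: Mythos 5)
Your proposal is correct and follows essentially the same route as the paper: reduce to the i.i.d.\ sum via the linear representation of Theorem~\ref{thm:main}, apply a Chernozhukov--Chetverikov--Kato high-dimensional Gaussian approximation to $\max_{j\in S}|W_j|$ (with condition~(i) supplying exactly the $B_n^4\log^7|S|=o(n)$ scaling for the sub-exponential envelope $m_S=\max_{j\in S}\sigma_j^{-1}\Sigma_j^{-1}(1+\|\beta_0\|)(1+\|\mu_0^j\|)$), and absorb the remainder $\max_{j\in S}|R_j|=O_\P(a_n)$ by anti-concentration, with condition~(ii) giving $a_n\sqrt{\log|S|}\to 0$. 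The only difference is packaging: the paper runs the Gaussian approximation as an explicit coupling (Theorem~3.1 of \cite{chernozhukov2015noncenteredprocesses} plus Strassen's theorem, then transfers to Kolmogorov distance via Lemma~2.4 of \cite{chernozhukov2012gaussian}), whereas you invoke the distributional-comparison form of the CLT together with Nazarov's inequality directly; these are interchangeable presentations of the same estimates.
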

\begin{proof}[Proof of Lemma \ref{cor:gaussapprox}]

Define the following random variables
 $$\begin{array}{rl}
 Z &= \max_{j\in S} |\sqrt{n}\sigma_j^{-1}(\check\beta_j - \beta_{0j})|,\\
 \bar Z &= \max_{j\in S} \left|\frac{1}{\sqrt{n}}\sum_{i=1}^n\bar\psi_j(y_i,z_i)\right|, \ \ \mbox{and}\\
 \widetilde Z &= \max_{j\in S} |\mathcal{G}_j|\end{array}$$ where $(\mathcal{G}_j)_{j\in S}$ is a zero mean Gaussian vector with covariance matrix given by $C_{jk}=\Ep[\bar\psi_j(y,z)\bar\psi_k(y,z)]$ for $k,j\in S$.

By the triangle inequality  we have
$$\begin{array}{rl}
|Z - \widetilde Z| & \leq |Z-\bar Z| + |\bar Z - \widetilde Z| \\
& \leq \max_{j \in S} \left| \sqrt{n}\sigma_j^{-1}(\check\beta_j - \beta_{0j}) -\frac{1}{\sqrt{n}}\sum_{i=1}^n\bar\psi_j(y_i,z_i) \right| + |\bar Z - \widetilde Z| \\
& = O_\P(\delta_n\max_{j\in S} \sigma_j^{-1}\Sigma_j^{-1})  + |\bar Z - \widetilde Z|
\end{array}
$$ where the last step follows from Theorem \ref{thm:main}.

Next to bound $|\bar Z - \widetilde Z|$ we will apply Theorem 3.1 in \cite{chernozhukov2015noncenteredprocesses} specialized to the i.i.d. setting. Let $X_{ij}=\bar\psi_j(y_i,z_i)$, $\Ep[X_{ij}]=0$, $Y_i\sim N(0,\Ep[X_iX_i'])$, so that  $\bar Z = \max_{j\in S} |n^{-1/2}\sum_{i=1}^nX_{ij}|$ and $\widetilde Z =  \max_{j\in S} |n^{-1/2}\sum_{i=1}^nY_{ij}|$. Define
$$\begin{array}{ll}
L_n=\max_{j\in S}\Ep[|X_{1j}|^3]\\
M_{n,X}(\delta) = \Ep[ \max_{j\in S}|X_{1j}|^31\{ \max_{j\in S}|X_{1j}|> \delta \sqrt{n}/\log |S| \} ], \ \mbox{and}\\
M_{n,Y}(\delta) = \Ep[ \max_{j\in S}|Y_{1j}|^31\{ \max_{j\in S}|Y_{1j}|> \delta \sqrt{n}/\log |S| \}]\end{array}$$
By Theorem 3.1 in \cite{chernozhukov2015noncenteredprocesses}, we have that for every Borel set $A\subseteq \mathbb{R}$ such that
$$\begin{array}{rl} P ( \max_{j\in S} n^{-1/2}\sum_{i=1}^nX_{ij} \in A ) & \leq P(\max_{j\in S} n^{-1/2}\sum_{i=1}^nY_{ij} \in A^{C\delta} )\\
 & +  C' \frac{\log^2|S|}{\delta^3\sqrt{n}}\{L_n+M_{n,X}(\delta)+M_{n,Y}(\delta)\}\end{array}$$
where $A^{C\delta}=\{ t \in \mathbb{R} : {\rm dist}(t,A)\leq C\delta\}$ is an $C\delta$-enlargement of $A$, and  $|S|\geq 2$. We proceed to bound $L_n$, $M_{n,X}(\delta)$ and $M_{n,Y}(\delta)$. For notational convenience define $m_j=\sigma_j^{-1}\Sigma_j^{-1}(1+\|\beta_0\|)(1+\|\mu^j_0\|)$ and denote $m_S:=\max_{j\in S}m_j$.

Note that $\bar\psi_j(y_1,z_1)=\sigma_j^{-1}\Sigma_j^{-1}\psi_j(y_1,z_1)$ where $\psi_j(y_1,z_1)$ is the product of subgaussian random variables with parameters $C'(1+\|\beta_0\|)$ and $C'(1+\mu^j_0\|)$. By (\ref{barpsi:mmoment}), for $k\geq 3$ we have
$$ \Ep[|\bar\psi_j(y_1,z_1)|^k]^{1/k} \leq C k \sigma_j^{-1}\Sigma_j^{-1}(1+\|\beta_0\|)(1+\|\mu^j\|)\leq C k m_j.$$
Since $\Ep[\bar\psi_j^2(y_1,z_1)]=1$, by Lemma \ref{lem:auxmoment} (with $k=3, \delta=1$),  we have $$L_n=\max_{j\in S}\Ep[|X_{1j}|^3] = \max_{j\in S}\Ep[|\bar\psi_j(y_1,z_1)|^3] \leq C'm_S^2.$$

To bound $M_{n,X}(\delta)$ we have that
$$\begin{array}{rl}
M_{n,X}(\delta) & = \Ep[ \max_{j\in S}|X_{1j}|^3 1\{ \max_{j\in S}|X_{1j}|> \delta \sqrt{n}/\log |S| \} ]\\
 & \leq \Ep[ \max_{j\in S}|X_{1j}|^6]^{1/2}\{ P( \max_{j\in S}|X_{1j}| > \delta \sqrt{n}/\log |S| )\}^{1/2}\\
 & \leq C\{m_S\log |S|\}^{3} \{ P( \max_{j\in S}|X_{1j}| > \delta \sqrt{n}/\log |S| )\}^{1/2}\\
 & \leq C\{m_S\log |S|\}^{3} \{ 2|S| \exp( -c\{\delta \sqrt{n}/\log |S|\}/m_S)\}^{1/2}\\
& \leq  C
 \end{array}$$
where the first inequality follows by Cauchy--Schwartz, the second by Lemma \ref{lem:auxmoment}, the third by Lemma \ref{aux:barpsi}, and the last step holds provided that $\delta \geq 2\frac{m_S}{c} n^{-1/2} \log^2 (2|S|)  \log(\{m_S\log |S|\}^{3})$.

Next recall that $\max_{j\in S}\Ep[Y_{1j}^2]=1$ and we have
$$\begin{array}{rl}
M_{n,Y}(\delta) & = \Ep[\max_{j\in S}|Y_{1j}|^3 1\{ \max_{j\in S}|Y_{1j}|> \delta \sqrt{n}/\log |S| \} ]\\
& \leq  \{\Ep[\max_{j\in S}|Y_{ij}|^6]\}^{1/2} \{ \Ep[1\{ {\displaystyle \max_{j\in S}}|Y_{1j}|> \frac{\delta \sqrt{n}}{\log |S|} \}]^{1/2}\\
& \leq C \log^{3/2}(|S|C') \{2|S|\exp( -\frac{\delta^2n}{2\log^2|S|}) \}^{1/2}\\
& \leq  C
 \end{array}$$
where the last step holds provided that $\delta > 8n^{-1/2} \log(|S|)  \{\log^{1/2}(|S|)+\log^{1/2}(\log^{3/2}(|S|C'))\}$.

Next note that if $\delta \geq  \frac{L_n^{1/3}\log^{2/3}(|S|)}{\gamma^{1/3}n^{1/6}}$ we have $\log^2|S|/\{\delta^3n^{1/2}\} \leq \gamma/L_n\leq \gamma$ since $L_n \geq 1$. Recall that $\max_{j\in S}m_j\leq n$ for $n$ sufficiently large. Therefore, by setting $\delta:=  \frac{L_n^{1/3}\log^{2/3}(|S|)}{\gamma^{1/3}n^{1/6}} + 8\frac{m_S}{1\wedge c} \frac{\log^2 (2|S|)}{n^{1/2}}  \log(\{m_S\log |S|\}^{3})$, we have
$$ P ( |\bar Z - \widetilde Z| > C\delta ) \leq  C' \gamma.$$

By Strassen's Theorem, there exists a version of $\widetilde Z$ such that
$$
\left| \bar Z - \widetilde Z \right| = O_\P\left( \frac{m_S^{2/3}\log^{2/3}(|S|)}{n^{1/6}} + m_S \frac{\log^2 (2|S|)}{n^{1/2}}  \log(m_S\log |S|) \right)
$$
The result then follows from Lemma 2.4 in \cite{chernozhukov2012gaussian} so that $\sup_{t\in \mathbb{R}}|P( Z \leq t ) -   P(\widetilde Z \leq t )|  = o(1)$ by noting that $\Ep[\widetilde Z] \leq C\sqrt{\log |S|}$ since $\Ep[\mathcal{G}_j^2]=1$, provided that
{\small $$\sqrt{\log |S|} \left(  \delta_n\max_{j\in S} \sigma_j^{-1}\Sigma_j^{-1} +  \frac{m_S^{2/3}\log^{2/3}|S|}{n^{1/6}} +  \frac{m_S\log^2 (|S|)\log(m_S\log |S|)}{n^{1/2}}  \right)=o(1),$$}
which holds under condition (i), namely $m_S^4\log^7(|S|)=o(n)$, and condition (ii) on $\delta_n$. Indeed, the first term is controlled by (ii), the second term by (i), and for the third term note that $m_S\log^{5/2}(|S|)\log(m_S\log |S|)=o(n^{1/2})$ is equivalent to $$m_S^2\log^5(|S|)\log^2(m_S\log |S|)=o(n)$$ which is implied by (i).
\end{proof}

\begin{proof}[Proof of Theorem \ref{thm:inference}]
We divide the proof in steps. Step 1 is the main argument which invokes the other steps for auxiliary calculations. Let $c_\alpha^*$ denote the $(1-\alpha)$-conditional quantile of $\widetilde Z^* = \max_{j\in S} |\widehat{\mathcal{G}}_j|$ given the data $(y_i,z_i)_{i=1}^n$ and
 $c_\alpha^0$ denote the $(1-\alpha)$-conditional quantile of $\widetilde Z= \max_{j\in S} |{\mathcal{G}}_j|$ where $\mathcal{G}$ is the Gaussian random vector defined in Lemma \ref{cor:gaussapprox}. For some $\vartheta_n \to 0$, we have that
$$ \begin{array}{rl}
&\displaystyle \Pr{\max_{j\leq S}|\sqrt{n}\hat\sigma_j^{-1}(\check\beta_{0j}-\beta_{0j})|\leq c_\alpha^*} \\
& = \Pr{ |\sqrt{n}\sigma_j^{-1}(\check\beta_{0j}-\beta_{0j})|\leq  (\hat\sigma_j/\sigma_j)c_\alpha^*, \forall j\in S} \\
& \leq_{(a)}  \Pr{ \max_{j\in S}|\sqrt{n}\sigma_j^{-1}(\check\beta_{0j}-\beta_{0j})|\leq  (1+\varepsilon_n)c_\alpha^*}+o(1)\\
& \leq_{(b)}  \Pr{ \max_{j\in S}|\sqrt{n}\sigma_j^{-1}(\check\beta_{0j}-\beta_{0j})|\leq c^0_{\alpha-\vartheta_n}}+o(1)\\
& \leq_{(c)}  \Pr{ \max_{j\in S}|\mathcal{G}_j|\leq c^0_{\alpha-\vartheta_n}}+o(1)\\
& = 1-\alpha+\vartheta_n+o(1)
\end{array}$$
where (a) follows from $|\hat\sigma_j/\sigma_j|\leq (1+\varepsilon_n)$ with probability $1-o(1)$ by Step 2 below, (b) follows from $c_\alpha^* (1+\varepsilon_n) \leq c^0_{\alpha-\vartheta_n}$ with probability $1- o(1)$ by Step 3 below, (c) follows by Lemma \ref{cor:gaussapprox}, and the last step by definition of $c^0_{\alpha-\vartheta_n}$.

The other inequality follows similarly.

~\\

Step 2. In this step we prove the claim: \\
For some $\varepsilon_n = O(\delta_n\log^{1/2}(n|S|)(1+\|\beta_0\|)(1+\max_{j\in S}\|\mu^j_0\|))$ we have \begin{equation}\label{step2:main}P\left( \max_{j\in S} |\sigma_j/\hat\sigma_j|\vee |\hat \sigma_j/\sigma_j|\leq 1+\varepsilon_n\right) \geq 1-o(1)\end{equation}
Let $\widetilde\psi_j(y,z)=\psi_j(y,z,\check\beta_{0j},\hat\eta^j)$ and $\psi_j(y,z)=\psi_j(y,z,\beta_{0j},\eta^j_0)$, so that $\hat\sigma_j=\hat\Sigma_j^{-1}\{\hat\En[\widetilde\psi_j^2(y,z)]\}^{1/2}$ and  $\sigma_j=\Sigma_j^{-1}\{\Ep[\psi_j^2(y,z)]\}^{1/2}$. Thus, we have that
\begin{equation}\label{eq:sigma:approx}
\begin{array}{rl}
|\hat\sigma_j - \sigma_j| & \leq \hat\Sigma_j^{-1}|  \En[\widetilde\psi^2_j(y,z)]^{1/2} - \En[\psi^2_j(y,z)]^{1/2}|\\
& + \hat\Sigma_j^{-1}| \En[\psi^2_j(y,z)]^{1/2} -  \Ep[\psi^2_j(y,z)]^{1/2}| \\
 & + |\hat\Sigma_j^{-1}-\Sigma_j^{-1}|\Ep[\psi^2(y,z)]^{1/2}\\
& \leq \hat\Sigma_j^{-1}\En[\{\widetilde\psi_j(y,z)-\psi_j(y,z)\}^2]^{1/2}\\
&+ \hat\Sigma_j^{-1}\frac{|\En[\psi^2_j(y,z)] -  \Ep[\psi^2_j(y,z)]| }{\En[\psi^2_j(y,z)]^{1/2} +  \Ep[\psi^2_j(y,z)]^{1/2}} + \frac{|\Sigma_j-\hat\Sigma_j|}{\hat\Sigma_j\Sigma_j}\Ep[\psi^2_j(y,z)]^{1/2}\\
& \leq \hat\Sigma_j^{-1}\En[\{\widetilde\psi_j(y,z)-\psi_j(y,z)\}^2]^{1/2}\\
&+ \hat\Sigma_j^{-1}\sigma_j\Sigma_j\frac{|\En[\psi^2_j(y,z)] -  \Ep[\psi^2_j(y,z)]| }{\Ep[\psi^2_j(y,z)]} + \hat\Sigma_j^{-1}|\Sigma_j-\hat\Sigma_j| \sigma_j
\end{array}
\end{equation}

To bound the first term in the RHS of (\ref{eq:sigma:approx}), using the notation $\widetilde\psi_j(y,z)=\psi_j(y,z,\check\beta_{0j},\hat\eta^j)$ and $\psi_j(y,z)=\psi_j(y,z,\beta_{0j},\eta^j_0)$, and the triangle inequality we have
\begin{equation}\label{Aux:L2Pn}
 \begin{array}{rl}
 & \En[\{\widetilde\psi_j(y,z)-\psi_j(y,z)\}^2]^{1/2}  \\
 & \leq \En[\{z_{-j}^T(\hat\mu^j-\mu^j_0)\}^2\{y-z^T\beta_0\}^2]^{1/2}\\
& + \En[\{z^T(\hat\beta-\beta_0)\}^2\{z_j-z_{-j}^T\mu^j_0\}^2]^{1/2}\\
& + \En[\{z_{-j}^T(\hat\mu^j-\mu^j_0)\}^2\{z^T(\hat\beta-\beta_0)\}^2]^{1/2}\\
& + |(e^j-\mu^j_0)^T\Gamma(\hat\beta-\beta_0)|+|(\hat\mu^j-\mu^j_0)^T\Gamma \beta_0|\\
&+|(\hat\mu^j-\mu^j_0)^T\Gamma (\hat\beta-\beta_0)|\\
& \leq C\max_{j\in S}(1+\|\beta_0\|)(1+\|\mu^j_0\|)\sqrt{\frac{s\log(pn)}{n}}\log^{1/2}(|S|n) \\
& = O(\delta_n/\sqrt{s})
\end{array}\end{equation}
where we used that $\max_{i\leq n, j\in S}|y_i-z_i^T\beta_0| \leq C(1+\|\beta_0\|)\log^{1/2}(n)$, and  $\max_{i\leq n, j\in S}|z_{ij}-z_{i,-j}^T\mu_0^j| \leq C\max_{j\in S}(1+\|\mu^j_0\|)\log^{1/2}(n|S|)$ with probability $1-o(1)$ by setting $C$ large enough constant since $y_i$ and $z_{i}$ are subgaussian random vectors,
from the rates of convergence and sparsity assumptions in Condition B combined with Lemma \ref{thm:KLthm4} (which establishes that $Cs$-sparse eigenvalues are bounded above by a constant), $\|\Gamma\|_{op}=\|\Gamma\|_\infty \leq C$, and the last line follows from the requirement of $\delta_n$ in Condition C.

To bound the second term in the RHS of (\ref{eq:sigma:approx}), we will apply Lemma \ref{lem:m2bound} with $X_{ij}:= \psi_j(y_i,z_i)/\Ep[\psi_j^2(y,z)]^{1/2}, j\in S$, $p=|S|$ (assumed $|S|\geq 2$ for convenience) and $k=2$. Noting that $\Ep[\psi_j^2(y,z)]^{1/2} = \sigma_j\Sigma_j$, we have
{ $$ \begin{array}{rl}
&\Ep\left[  \max_{j\in S}\frac{|\En[\psi^2_j(y,z)]-\Ep[\psi_j^2(y,z)]|}{\Ep[\psi_j^2(y,z)]} \right]  \lesssim \sqrt{\frac{\log |S| }{n}} M_{k}^{1/2}\\
& \lesssim \frac{\log^{1/2}(|S|) \log(pn)}{\sqrt{n}}{\displaystyle\max_{j\in S}}(\sigma_j\Sigma_j)^{-1}(1+\|\beta_0\|)(1+\|\mu^j_0\|) \\
&  \lesssim \frac{\delta_n}{s} \log^{1/2}(|S|)\max_{j\in S}(\sigma_j\Sigma_j)^{-1}
\end{array}$$}

\noindent where we used that $M_{k} \lesssim \max_{j\in S}\sigma_j^{-2}\Sigma_j^{-2}(1+\|\beta_0\|)^2(1+\|\mu^j_0\|)^2 \log^2(pn)$ by the subgaussian assumption in Condition A and Lemma \ref{lem:subgaussian}, and Condition C. Thus by Lemma \ref{lemma:CCK} with $t=\log n$, $q=6$, and $|\mathcal{F}|=|S|$, and using that $\sigma_j\Sigma_j$ is bounded away from zero, we have that with probability $1-o(1)$
$$\begin{array}{l}
 {\displaystyle \max_{j\in S} }\frac{|\En[\psi^2_j(y,z)]-\Ep[\psi_j^2(y,z)]|}{\Ep[\psi_j^2(y,z)]}  \leq C \frac{\delta_n}{s}\log^{1/2}(|S|)\max_{j\in S}(\sigma_j\Sigma_j)^{-1} \\
 + Cn^{-1/2}\{1+ n^{-1/2}(1+\|\beta_0\|)(1+\max_{j\in S}\|\mu^j_0\|)\log (|S|n)\}\log^{1/2}n\\
 +Cn^{-1}(1+\|\beta_0\|)(1+\max_{j\in S}\|\mu^j_0\|)\log^{2}(n|S|)\\
 \lesssim \delta_n \log^{1/2}(|S|)  + Cn^{-1/2}\log(n|S|)
 \end{array}$$
under $n^{-1/2}(1+\|\beta_0\|)(1+\max_{j\in S}\|\mu^j_0\|)\log (|S|n)=o(1)$ implied by Condition C. Note further that $\delta_n \geq n^{-1/2} \log(pn)$ so the last term is negligible compared with the first.

To bound the last term in the RHS of (\ref{eq:sigma:approx}), we have with probability $1-o(1)$
$$\begin{array}{rl}
&\max_{j\in S}|\hat\Sigma_j-\Sigma_j|\sigma_j \\
& \leq \max_{j\in S}|\En[z_jz_{-j}^T](\hat\mu^j-\mu_0^j)|\sigma_j \\
&+  \max_{j\in S}|\En[(z_j-z_{-j}^T\mu^j_0)z_j]-\Ep[(z_j-z_{-j}^T\mu^j_0)z_j]|\sigma_j\\
& \leq \max_{j\in S}\{\En[z_j^2]\}^{1/2}\{\En[\{z_{-j}^T(\hat\mu^j-\mu_0^j)\}^2]\}^{1/2}\sigma_j \\
&+  \max_{j\in S}|\En[(z_j-z_{-j}^T\mu^j_0)z_j]-\Ep[(z_j-z_{-j}^T\mu^j_0)z_j]|\sigma_j\\
& \lesssim  {\displaystyle\max_{j\in S}}\sigma_j(1+\|\mu^j_0\|)\sqrt{\frac{s\log(pn)}{n}}+ {\displaystyle \max_{j\in S}}\sigma_j(1+\|\mu^j_0\|)\sqrt{\frac{\log(|S|n)}{n}}\\
& \lesssim \delta_n /\sqrt{s\log(pn)}\end{array}$$
\noindent where the first step follows from  the triangle inequality, the second step from Cauchy-–Schwarz inequality, the third from the rates of convergence and sparsity assumptions in Condition B (and noting that sparse eigenvalues of  order $Cs$ of $\En[zz^T]$ are bounded above with probability $1-o(1)$ by Lemma \ref{thm:KLthm4}), $\max_{j\in S}\{\En[z_j^2]\}^{1/2}\leq C$ with probability $1-o(1)$, and Lemma \ref{lem:extra}. The last step follows from Condition C and $\sigma_j \leq C(1+\|\beta_0\|)$.

Finally, note that  $\Sigma_j$ is bounded away from zero and from above so that $\hat\Sigma_j$ is also bounded from below uniformly in $j\in S$ and $n$ with probability $1-o(1)$.

Combining these relations we have $$\varepsilon_n \leq C\delta_n\log^{1/2}(|S|)(1+\|\beta_0\|)(1+\max_{j\in S}\|\mu^j_0\|).$$

% This was done with L1-rate instead
%$$\begin{array}{rl}
%&\max_{j\in S}|\hat\Sigma_j-\Sigma_j|\frac{\Ep[\psi^2_j(y,z)]^{1/2}}{\Sigma_j}  = \max_{j\in S}|\En[z_jz_{-j}^T](\hat\mu^j-\mu_0^j)|\sigma_j \\
%& \lesssim \max_{1\leq j < k \leq p}|\En[z_jz_k]| \max_{j\in S} \|\hat\mu^j-\mu_0^j\|_1\sigma_j \\
%&\lesssim  \max_{j\in S} (1+\|\beta_0\|)(1+\|\mu^j\|)s\sqrt{\frac{\log(pn)}{n}}\\
%& \lesssim \delta_n \end{array}$$
%\noindent where the first step follows $\Ep[\psi^2_j(y,z)]^{1/2}=\sigma_j\Sigma_j\leq C(1+\|\beta_0\|)$, the second step from the rates in Condition B and noting that $\max_{1\leq j < k \leq p}|\En[z_jz_k]|  \leq \max_{1\leq j < k \leq p}|\Ep[z_jz_k]|  + o_P(1)$, and the third step from Condition C.

~\\

Step 3. In this step we show that there is a sequence $\vartheta_n \to 0$ such that
$$ P( c_\alpha^*(1+\varepsilon_n) > c^0_{\alpha-\vartheta_n}) = o(1)$$
where $\varepsilon_n$ is defined in Step 2.

Recall that $\widehat{\mathcal{G}}_j=\frac{1}{\sqrt{n}} \sum_{i=1}^ng_i\hat\psi_j(y_i,z_i)$ and define
 $$
  \widetilde Z^*= \max_{j\in S} |\widehat{\mathcal{G}}_j|, \ \ \bar Z^* = \max_{j\in S} \left|\frac{1}{\sqrt{n}} \sum_{i=1}^ng_i\bar\psi_j(y_i,z_i)\right|, \ \ \mbox{and} \ \
 \widetilde Z  = \max_{j\in S} |\mathcal{G}_j|$$ where $(\mathcal{G}_j)_{j\in S}$ is a zero mean Gaussian vector with covariance matrix given by $C_{jk}=\Ep[\bar\psi_j(y,z)\bar\psi_k(y,z)]$, and $\widetilde Z^*$ is associated with the multiplier bootstrap as defined in (\ref{def:mb}).

 We have that
\begin{equation}\label{aux:step3}\begin{array}{rl}
 |\widetilde Z^* - \widetilde Z | & \leq \left|\widetilde Z^* -  \bar Z^* \right| + \left| \bar Z^* - \widetilde Z \right|  \\
& \leq {\displaystyle \max_{j\in S}}  \left| \frac{1}{\sqrt{n}} \sum_{i=1}^ng_i\{\hat\psi_j(y_i,z_i) - \bar\psi_j(y_i,z_i)\}\right| + \left|\bar Z^* - \widetilde Z\right| \\ \end{array}
 \end{equation}

To control the first term of the RHS in (\ref{aux:step3}) note that conditional on $(y_i,z_i)_{i=1}^n$, $\frac{1}{\sqrt{n}} \sum_{i=1}^ng_i\{\hat\psi_j(y_i,z_i) - \bar\psi_j(y_i,z_i)\}$ is a zero-mean Gaussian random variable with variance $\En[\{\hat\psi_j(y,z)-\bar\psi_j(y,z)\}^2]$. By Step 2's claim (\ref{step2:main}) and (\ref{Aux:L2Pn}), with probability $1-o(1)$ we have uniformly over $j\in S$
$$\begin{array}{rl}\En[\{\hat\psi_j(y,z)-\bar\psi_j(y,z)\}^2]^{1/2} & \leq |\hat\sigma_j^{-1}\hat\Sigma_j^{-1}-\sigma_j^{-1}\Sigma_j^{-1}|\En[\psi_j^2(y,z,\check\beta_j,\hat\eta^j)]^{1/2} \\
& + \sigma_j^{-1}\Sigma_j^{-1}\En[\{\widetilde \psi_j(y,z)-\psi_j(y,z)\}^2]^{1/2}\\
& \lesssim \varepsilon_n \\
\end{array}$$
since $\hat\sigma_j$, $\sigma_j$, $\Sigma_j$, $\hat \Sigma_j$ and $\En[\hat\psi_j^2(y,z)]^{1/2}$ are bounded away from zero with probability $1-o(1)$. Therefore we have with probability $1-o(1)$ that
$$ \begin{array}{rl}\Ep\left[ \max_{j\in S} \left| \frac{1}{\sqrt{n}} \sum_{i=1}^ng_i\{\hat\psi_j(y_i,z_i) - \bar\psi_j(y_i,z_i)\}\right| \ \mid (y_i,z_i)_{i=1}^n \right] \\
\lesssim \varepsilon_n \sqrt{\log 2|S| } =: I_n
\end{array} $$
by Corollary 2.2.8 in \cite{vdV-W}.

Next we proceed to the second term of the RHS in (\ref{aux:step3}). We will apply Theorem 3.2 in \cite{chernozhukov2015noncenteredprocesses} and a conditional version of Strassen's theorem due to \cite{monrad1991nearby}. Let $\bar Z^*=\max_{j\in S} |X_j|$ and $\widetilde Z=\max_{j\in S}|Y_j|$ where $X_{j}=\frac{1}{\sqrt{n}}\sum_{i=1}^ng_i\bar \psi_j(y_i,z_i)$, $j\in S$, and $Y \sim N(0,\Ep[\bar\psi_S(y_1,y_1)\bar\psi^T_S(y_1,z_1)])\in \mathbb{R}^{|S|}$. By construction we have $\Ep[X_j]=\Ep[Y_j]=0$ and $\Ep[X_j^2]=\Ep[Y_j^2]=1$ for all $j\in S$. For a given threshold $\bar\Delta=\bar\Delta_n>0$,  consider the event $E_n=\{ \Delta \leq \bar \Delta \}$ where $$\Delta=\max_{j,k\in S}| \En[\bar\psi_j(y,z)\bar\psi_k(y,z)] - \Ep[\bar\psi_j(y_1,z_1)\bar\psi_k(y_1,z_1)]|.$$
Conditionally on $E_n$,  Theorem 3.2 in \cite{chernozhukov2015noncenteredprocesses} established that for every $\delta>0$ and every Borel subset $A\subset \mathbb{R}$
 $$ P( \max_{j\in S} X_j \in A \mid (y_i,z_i)_{i=1}^n) \leq P(\max_{j\in S} Y_j \in A^\delta ) + C\delta^{-1}\sqrt{\bar \Delta \log(|S|)}$$
for a universal constant $C>0$ where $A^{\delta}=\{ t \in \mathbb{R} : {\rm dist}(t,A)\leq \delta\}$. In turn, by a conditional version of Strassen's theorem, Theorem 4 in \cite{monrad1991nearby} (see also Lemma 4.2 in \cite{chernozhukov2015noncenteredprocesses}), there is a version of $\widetilde Z$  such that \begin{equation}\label{multiplier:coupling}P( | \bar Z^* - \widetilde Z | > \delta ) \leq P(E_n^c) + C\delta^{-1}\sqrt{\bar \Delta \log(|S|)}.\end{equation}
The result will follow from a suitable choice of $\bar \Delta \to 0$ and $\delta \to 0$. Let $m_S:=\max_{j\in S} \sigma_j^{-1}\Sigma_j^{-1}(1+\|\beta_0\|)(1+\|\mu^j_0\|)$. By symmetrization arguments (Lemma 2.3.6 in \cite{van1996weak}), for i.i.d. Rademacher random variables $(r_i)_{i=1}^n$, we have
$$
\begin{array}{rl}
\Ep[\Delta] & \leq C \Ep[  \Ep_{r} [ \En[r\bar \psi_{j}(y,z)\bar\psi_{k}(y,z)] ] \\
& \leq_{(1)} C\sqrt{n^{-1}\log(|S|^2)} \Ep[ \max_{j,k\in S} \En[  \bar \psi_{j}^2(y,z)\bar\psi_{k}^2(y,z)]^{1/2}]\\
& \leq_{(2)} C\sqrt{n^{-1}\log(|S|^2)} \{ \Ep[\max_{i\leq n, j\in S}|\bar \psi_{j}(y_i,z_i)|^4] n^{-1}\log|S|\}^{1/2}\\
 & +  C\sqrt{n^{-1}\log(|S|^2)} \{\max_{j\in S} \Ep[ \bar \psi_{j}^4(y,z)] \}^{1/2}\\
 & \leq_{(3)}  Cn^{-1}\log(|S|)m_S^2 \log^2(n|S|) \\
 & + C\sqrt{n^{-1}\log(|S|^2)} m_S^{3/2}\\
 & \leq_{(4)} C' \sqrt{n^{-1}\log(|S|)} m_S^{3/2}
\end{array}
$$ where (2) follows by Cauchy-–Schwarz inequality and Lemma \ref{lem:m2bound} (part 2), (3) by Lemma \ref{lem:subgaussian} and Lemma \ref{lem:auxmoment} (with $k=4$ and $\delta=1$), and (4) holds by the assumed condition that $n^{-1}m_S^4\log^7(|S|) = o(1)$.

For $\gamma \in (0,1)$, we can set $\bar \Delta = \gamma^{-1}\sqrt{n^{-1}\log(|S|)} m_S^{3/2}$, we have $P(E_n^c)=O(\gamma)$ and by setting $\delta = \gamma^{-1}\{\bar\Delta \log |S|\}^{1/2}$ we have by (\ref{multiplier:coupling}) that
{\small $$ \begin{array}{rl}
\displaystyle |\bar Z^* - \widetilde Z | &\displaystyle  = O_\P\left( n^{-1/4}\log^{3/4}(|S|) m_S^{3/4} \right) =: II_n
\end{array} $$}
where $II_n\to 0$ under $n^{-1}m_S^{4}\log^7(|S|)=o(1)$.

Define $r_n:= \ell_n(I_n + II_n)\to 0$ for some $\ell_n\to \infty$, and $\wp_n = P( |\widetilde Z^* - \widetilde Z| > r_n )^{1/2} =o(1)$.
Letting $U_n := P( |\widetilde Z^* - \widetilde Z| > r_n \mid (y_i,z_i)_{i=1}^n)$ note that $$\begin{array}{rl}
P(  U_n > \wp_n ) & = \Ep [ 1\{ U_n > \wp_n \} ]\\
& \leq  \Ep [ U_n ] / \wp_n \\
& = \wp_n\\
\end{array}
$$ so that $P( |\widetilde Z^* - \widetilde Z| > r_n \mid (y_i,z_i)_{i=1}^n) \leq \wp_n$ with probability at least $1-\wp_n$. Then, by definition of the quantile function we have that
$$\begin{array}{rl}
(1+\varepsilon_n)c_\alpha^* & \leq (1+\varepsilon_n)(c^0_{\alpha - \wp_n}+r_n) \\
& = c^0_{\alpha-\vartheta_n}- \{ c^0_{\alpha-\vartheta_n} - c^0_{\alpha - \wp_n}\} +c^0_{\alpha - \wp_n} \varepsilon_n + (1+\varepsilon_n)r_n\\
& \leq c^0_{\alpha-\vartheta_n} - \frac{c(\vartheta_n- \wp_n)}{\Ep[\widetilde Z]} +  c^0_{\alpha - \wp_n}\varepsilon_n + (1+\varepsilon_n)r_n\\
& \leq c^0_{\alpha-\vartheta_n}
\end{array}
$$ where the third step we used Corollary 2.1 in \cite{chernozhukov2014honestbands}, and we set $\vartheta_n\to 0$ so that $\Ep[\widetilde Z] \{  c^0_{\alpha - \wp_n}\varepsilon_n + r_n(1+\varepsilon_n) \} = o(\vartheta_n- \wp_n)$. This is possible since  $\Ep[\widetilde Z] \leq C\sqrt{1 + \log |S|}$, $ c^0_{\alpha - \wp_n} \leq C\sqrt{1 + \log (|S|/\{\alpha - \wp_n\})}$, so that
$$ \varepsilon_n \log |S| \leq C \delta_n \log^{3/2}(|S|)(1+\|\beta_0\|)(1+\max_{j\in S}\|\mu^j_0\|)=o(1)$$ under the assumed condition (i) in the statement of the theorem, and
 $$\begin{array}{rl}
 r_n \sqrt{\log |S|} & = \ell_n(I_n+II_n) \\
 & = \ell_n O( \varepsilon_n\sqrt{\log|S|} + n^{-1/4}m_S^{3/4}\log^{3/4}|S|)\\
 & = \ell_n o(1) + \ell_n O(n^{-1/4}m_S^{3/4}\log^{3/4}|S|) = o(1)\end{array}$$
by choosing $\ell_n \to \infty$ slowly enough and  $n^{-1/4}m_S^{3/4}\log^{3/4}|S|=o(1)$ by condition (ii) in the statement of the theorem.

\end{proof}

\begin{proof}[Proof of Theorem \ref{thm:mainEst}]
%In what follows we will work with an estimate of $\Gamma$, namely $\hat \Gamma$ to keep track of its dependence.
%We will also assume that $\|\Gamma-\hat \Gamma\|_\infty \leq C \sqrt{\log(p)/n}$ with probability $1-o(1)$. (Note that this is automatically satisfied using $\hat\Gamma=\Gamma$ since it is known under Assumption A.)

For $\hat\eta^j = [ \hat\beta ; \hat\mu^j]$ we can rewrite $\psi_j$ as
%{ $$\begin{array}{rl}
% \psi_j(y,z,\theta,\hat\eta^j) & = z_j(y-z_j\theta-z^T\hat\beta_{-j})+\Gamma_{jj}\theta - (\hat\mu^j)^T\{ z_{-j}(y-z_j\theta-z^T\hat\beta_{-j})+\Gamma_{-j,-j}\hat\beta_{-j}\}\\
%  & = (z_j-(\hat\mu^j)^Tz_{-j})(y-z_j\theta-z^T\hat\beta_{-j})+\Gamma_{jj}\theta -(\hat\mu^j)^T\Gamma_{-j,-j}\hat\beta_{-j} \end{array}
%$$}
$$ \psi_j(y,z,\theta,\hat\eta^j)  = (z_j-z_{-j}^T\hat\mu^j)(y-z_j\theta-z^T_{-j}\hat\beta_{-j})+\hat \Gamma_{jj}\theta -(\hat\mu^j)^T\hat \Gamma_{-j,-j}\hat\beta_{-j}$$
Then, we can achieve $0 = \frac{1}{n}\sum_{i=1}^n\psi_j(y_i,z_i,\theta,\hat\eta^j)$ by setting $\theta = \check\beta_j$ defined as
\begin{equation}\label{def:newbetaj} \check\beta_j := \frac{\hat \Sigma_j^{-1}}{n}\sum_{i=1}^n(z_{ij}-z_{i,-j}^T\hat\mu^j)(y_i-z_i^T\hat\beta_{-j})-(\hat \mu^j)^T\hat \Gamma_{-j,-j}\hat\beta_{-j}\end{equation}
where $\hat \Sigma_j := \left\{\frac{1}{n}\sum_{i=1}^n (z_{ij}-z_{i,-j}^T\hat\mu^j)z_j - \hat \Gamma_{jj} \right\}$.
Next we rearrange the expression (\ref{def:newbetaj}). We will use the notation
$\En[\cdot]=\frac{1}{n}\sum_{i=1}^n[\cdot_i]$. It follows that

{\small $$\begin{array}{rl}
  \check\beta_j & = \hat \Sigma_j^{-1}\En[(z_{j}-z_{-j}^T\hat\mu^j)(y-z^T_{-j}\hat\beta_{-j})-(\hat \mu^j)^T\hat \Gamma_{-j,-j}\hat\beta_{-j}]\\
  & = \hat \Sigma_j^{-1}\{\En[(z_{j}-z_{-j}^T\hat\mu^j)z_j]\beta_{0j} -\hat\Gamma_{jj}\beta_{0j} \\
  & +  \En[(z_{j}-z_{-j}^T\hat\mu^j)(y-z_j\beta_{0j}-z^T_{-j}\hat\beta_{-j})+ \hat\Gamma_{jj}\beta_{0j}-(\hat \mu^j)^T\hat \Gamma_{-j,-j}\hat\beta_{-j}]\}\\
  & =\beta_{0j}\\
  &+\hat \Sigma_j^{-1}\En[(z_{j}-z_{-j}^T\hat\mu^j)(y-z_j\beta_{0j}-z^T_{-j}\hat\beta_{-j})+ \hat\Gamma_{jj}\beta_{0j}-(\hat \mu^j)^T\hat \Gamma_{-j,-j}\hat\beta_{-j}]\\
\end{array}$$}
In turn we can rewrite
$$
\begin{array}{rl}
\En[(z_{j}-z_{-j}^T\hat\mu^j)(y-z_j\beta_{0j}-z^T_{-j}\hat\beta_{-j})+ \hat\Gamma_{jj}\beta_{0j}-(\hat \mu^j)^T\hat \Gamma_{-j,-j}\hat\beta_{-j}]\\
= \En[(z_{j}-z_{-j}^T\mu^j_0)(y-z_j\beta_{0j}-z^T_{-j}\beta_{0,-j})+ \Gamma_{jj}\beta_{0j}-(\mu^j_0)^T \Gamma_{-j,-j}\beta_{0,-j}] \\
 + (\hat\Gamma_{jj}-\Gamma_{jj})\beta_{0j} -(\mu^j_0)^T(\hat \Gamma_{-j,-j}-\Gamma_{-j,-j})\beta_{0,-j} \\
 + T_1 + T_2 + T_3 + T_4 + T_5
\end{array}
$$ where
$$\begin{array}{rl}
T_1 & = \En[(z_{j}- z_{-j}^T\mu^j_0)z^T_{-j}+(\mu^j_0)^T \Gamma_{-j,-j}](\beta_{0,-j}-\hat\beta_{-j})\\
T_2 & =(\mu^j_0)^T(\hat \Gamma_{-j,-j}-\Gamma_{-j,-j})(\beta_{0,-j}-\hat\beta_{-j}) \\
T_3 & =  (\mu^j_0-\hat\mu^j)^T\En[z_{-j}(y-z^T\beta_{0})+\Gamma_{-j,-j}\beta_{0,-j}]\\
T_4 & =(\mu^j_0-\hat\mu^j)^T\En[z_{-j}z^T -\hat \Gamma_{-j,-j}](\beta_{0,-j}-\hat\beta_{-j})\\
T_5 & = (\mu^j_0-\hat\mu^j)^T(\hat \Gamma_{-j,-j}-\Gamma_{-j,-j})\beta_{0,-j}
\end{array}
$$

By Step 2 below the quantities $T_k, k=1,\ldots,5$ satisfy with probability $1-o(1)$
$$|T_1 + T_2 + T_3 + T_4 + T_5 | \leq C(1+\max_{j\in S}\|\mu_0^j\|)(1+\|\beta_0\|) s \log (pn) / n.  $$

Under Condition C, we have that $(1+\max_{j\in S}\|\mu_0^j\|)(1+\|\beta_0\|) s \log (pn) \leq \delta_n \sqrt{n}$, so that we obtain the following linear representation for the estimator uniformly over $j\in S$
\begin{equation}\label{middlemainstep}\begin{array}{rl}
\sqrt{n}\hat \Sigma_j(\check\beta_j-\beta_{0j}) & = \frac{1}{\sqrt{n}}\sum_{i=1}^n\psi_j(y_i,z_i) + \sqrt{n}(\hat\Gamma_{jj}-\Gamma_{jj})\beta_{0j} \\
&  -(\mu^j_0)^T\sqrt{n}(\hat \Gamma_{-j,-j}-\Gamma_{-j,-j})\beta_{0,-j} + O_\P(\delta_n)\end{array}  \end{equation}
where $\psi_j(y_i,z_i)=(z_{j}- z_{-j}^T\mu^j_0)(y-z^T\beta_{0})+ (e^j-\mu^j_0)^T\Gamma\beta_{0}$ is a zero mean random variable. Note that this is equivalent to
{\small $$
\sqrt{n}\hat \Sigma_j(\check\beta_j-\beta_{0j}) = \frac{1}{\sqrt{n}}\sum_{i=1}^n(z_{ij}-z_{i,-j}^T\mu^j_0)(y_i-z_i^T\beta_{0})+ (e^j-\mu^j_0)^T\hat \Gamma\beta_{0}  + O_\P(\delta_n) \\
$$}

The result follows provided we show
$$ \max_{j\in S} \left|\sqrt{n}(\hat \Sigma_j-\Sigma_j)(\check\beta_j-\beta_{0j})\right| = O_\P(\delta_n) $$

It follows that with probability $1-o(1)$, uniformly over $j\in S$
\begin{equation}\label{bound:hatSigmaj}\begin{array}{rl}
|\hat \Sigma_j-\Sigma_j| & \leq | \En[z_{-j}^T\{\hat \mu_0^j-\mu^j_0\}z_j]| + | \hat\Gamma_{jj}- \Gamma_{jj} |\\
& + |\En[(z_j-z_{-j}^T \mu_0^j)z_j]-\Ep[(z_j-z_{-j}^T \mu_0^j)z_j] |\\
& \leq_{(1)} | \En[(z_{-j}^T\{\hat \mu_0^j-\mu^j_0\})^2]^{1/2}\En[z_j^2]^{1/2} + \| \hat\Gamma- \Gamma \|_\infty \\
& +  |\En[(z_j-z_{-j}^T \mu_0^j)z_j]-\Ep[x_j^2+\Gamma_{jj}] |\\
& \leq_{(2)} | \En[(z_{-j}^T\{\hat \mu_0^j-\mu^j_0\})^2]^{1/2}\En[z_j^2]^{1/2} + C\sqrt{\log(pn)/n}\\
& \leq_{(3)} C(1+\max_{j\in S}\|\mu^j_0\|)\sqrt{s\log(pn)/n}\\
& \leq_{(4)} C\delta_n/\{(1+\|\beta_0\|)\sqrt{s\log(pn)}\}\\
\end{array}\end{equation}
where (1) follows by Cauchy--Schwarz, (2) follows by Condition D, Lemma \ref{lem:extra} with $\log(pn)\leq \delta_n n$ implied by Condition A, (3) Condition B and bounded $Cs$-sparse eigenvalues of the matrix $\En[zz^T]$ with probability $1-o(1)$, and (4) by Condition C.

Next note that by (\ref{middlemainstep}), we have
$$\begin{array}{rl}
\sqrt{n}\|\check\beta_S - \beta_{0S}\|_\infty & \leq \max_{j\in S} \hat\Sigma_j^{-1} \sigma_j\Sigma_j \max_{j\in S} \left|\frac{1}{\sqrt{n}} \sum_{i=1}^n\bar\psi_j(y_i,z_i)\right|\\
& +  \max_{j\in S} \hat\Sigma_j^{-1}\sqrt{n}\|\hat\Gamma-\Gamma\|_\infty\|\beta_0\|_\infty \\
& +  \max_{j\in S} \hat\Sigma_j^{-1}\sqrt{n}\|\mu^j_0\| \ \|\hat\Gamma-\Gamma\|_{op}\|\beta_0\| \\
& +  \max_{j\in S} \hat\Sigma_j^{-1} O_\P(\delta_n)\\
& \leq \max_{j\in S} \hat\Sigma_j^{-1} \sigma_j\Sigma_j \max_{j\in S} \left|\frac{1}{\sqrt{n}} \sum_{i=1}^n\bar\psi_j(y_i,z_i)\right|\\
& + C(\max_{j\in S} \hat\Sigma_j^{-1}) \{\sqrt{\log(pn)}(1+\|\beta_0\|)+ O_\P(\delta_n)\}\\
\end{array}$$
since $\|\hat\Gamma-\Gamma\|_{op}=\|\hat\Gamma-\Gamma\|_\infty \leq C\sqrt{\log(pn)/n}$ by Condition D.
Note that by inspection of the proof of Corollary \ref{thm:Linfty}, relation (\ref{bound:sum_psi_bar}), we have that with probability $1-o(1)$
$$  \max_{j\in S} \left|\frac{1}{\sqrt{n}} \sum_{i=1}^n\bar\psi_j(y_i,z_i)\right| \leq C\sqrt{\log (pn)}.$$
Combining these relations we have with probability $1-o(1)$ that
$$ \begin{array}{rl}
& \max_{j\in S} \left|\sqrt{n}(\hat \Sigma_j-\Sigma_j)(\check\beta_j-\beta_{0j})\right| \\
& \leq \frac{C\delta_n(1+\|\beta_0\|)^{-1}}{\sqrt{s\log(pn)}} \left\{ {\displaystyle \max_{j\in S} } \frac{\sigma_j\Sigma_j}{\hat\Sigma_j} \sqrt{\log(pn)} + \sqrt{\log(pn)}(1+\|\beta_0\|)+ O_\P(\delta_n) \right\}\\
& = O_\P(\delta_n) \end{array}$$
as needed.

Step 2. (Auxiliary Calculations for $T_k, k=1,\ldots,5$.) We note that the bounds will hold over uniformly over $j\in S$. We start with $T_5$. It follows that with probability $1-o(1)$ that
$$\begin{array}{rl}
|T_5| & = |(\mu^j_0-\hat\mu^j)^T(\hat \Gamma_{-j,-j}-\Gamma_{-j,-j})\beta_{0,-j}|\\
&  \leq \|\mu^j_0-\hat\mu^j\| \ \ \|\hat \Gamma_{-j,-j}-\Gamma_{-j,-j}\|_{op} \ \ \|\beta_0\|\\
& \leq C(1+\|\mu^j_0\|)\|\beta_0\|\sqrt{s \log(p)/n} \|\hat \Gamma_{-j,-j}-\Gamma_{-j,-j}\|_{\infty}
\end{array}
$$ by the $\ell_2$-rate of convergence of $\hat\mu^j$ assumed on Condition B, and the matrices $\hat\Gamma$ and $\Gamma$ are diagonal by Condition D.

The bound of $T_4$ note that under Condition A, the matrix $\En[z_{-j}z^T -\hat \Gamma_{-j,-j}]$ has sparse eigenvalues of order $2(C+1)s$ bounded from above with probability $1-o(1)$. Indeed by Lemma \ref{thm:KLthm4} with $t := \log n + 2\log \binom{p}{2C+1}$, %\ref{thm:RV34} using that $\mathbb{E}[\max_{i\leq n} \|z_i\|_\infty^2] \leq C\log (pn)$
a the relation $s^2\log^2(pn) \leq \delta_n n$ implied by Condition C, and $\|\hat \Gamma\|_{op} \leq C$. Furthermore, by Condition B we have $\|\beta_0\|_0\leq s$ and $\|\mu^j_0\|_0\leq s$ and with probability $1-o(1)$ we have $\|\hat\beta\|_0\leq Cs$ and $\|\hat\mu^j\|_0 \leq Cs$.  Therefore we with probability $1-o(1)$ that
$$\begin{array}{rl}
|T_4| & =|(\mu^j_0-\hat\mu^j)^T\En[z_{-j}z^T -\hat \Gamma_{-j,-j}](\beta_{0,-j}-\hat\beta_{-j})|\\
& \leq \|\mu^j_0-\hat\mu^j\| \ \sup_{\|\delta\|_0\leq 2(C+1)s, \|\delta\|=1} \delta^T\En[zz^T -\hat \Gamma]\delta  \ \  \|\beta_{0,-j}-\hat\beta_{-j}\|\\
& \leq C (1+\|\mu_0^j\|)(1+\|\beta_0\|) s \log (pn) / n\\
\end{array}
 $$ where the last bound holds  by Condition B to bound the $\ell_2$-rates $\|\mu^j_0-\hat\mu^j\|$ and $\|\beta_{0,-j}-\hat\beta_{-j}\|$.

In order to control $T_3$ note that, under Condition A, with probability $1-\varepsilon -o(1)$
$$\|\En[z(y-z^T\beta_{0})+\Gamma\beta_{0}]\|_\infty \leq C (1+\|\beta_0\|)\sqrt{\log(p/\varepsilon)/n}$$ so that
with probability $1-\varepsilon - o(1)$ we have
$$\begin{array}{rl}
|T_3| & = |(\mu^j_0-\hat\mu^j)^T\En[z_{-j}(y-z^T\beta_{0})+\Gamma_{-j,-j}\beta_{0,-j}]| \\
& \leq \|\mu^j_0-\hat\mu^j\|_1\|\En[z_{-j}(y-z^T\beta_{0})+\Gamma_{-j,-j}\beta_{0,-j}]\|_\infty\\
& \leq C(1+\|\mu^j_0\|)(1+\|\beta_0\|)s\log(p/\varepsilon)/n
\end{array}
$$
where we used Condition B to bound the $\ell_1$-rate of convergence of $\hat\mu^j$.

The bound on $T_2$ follows from
$$\begin{array}{rl}
|T_2| & = |(\mu^j_0)^T(\hat \Gamma_{-j,-j}-\Gamma_{-j,-j})(\beta_{0,-j}-\hat\beta_{-j})| \\
& \leq \|\mu_0^j\|\|\hat \Gamma_{-j,-j}-\Gamma_{-j,-j}\|_{op}\|\beta_{0,-j}-\hat\beta_{-j}\|\\
& = \|\mu_0^j\|\|\hat \Gamma_{-j,-j}-\Gamma_{-j,-j}\|_\infty\|\beta_{0,-j}-\hat\beta_{-j}\|\\
& \leq \|\mu_0^j\|(1+\|\beta_0\|)\sqrt{s\log(p)/n} \|\hat \Gamma_{-j,-j}-\Gamma_{-j,-j}\|_\infty\\
\end{array}
$$ where we used that the matrices $\hat \Gamma$ and $\Gamma$ are diagonal and Condition B.

To control $T_1$ we have
$$\begin{array}{rl}
|T_1| & = |\En[(z_{j}-z_{-j}^T\mu^j_0)z^T_{-j}+(\mu^j_0)^T \Gamma_{-j,-j}](\beta_{0,-j}-\hat\beta_{-j})|\\
& \leq \{\|\En[w_{j}z^T_{-j}]\|_\infty+ \|\En[(x_{j}-(\mu^j_0)^T x_{-j})x^T_{-j}]\|_\infty\\
 & +\|\En[(x_{j}- x_{-j}^T\mu^j_0)w^T_{-j}]\|_\infty+ \|\En[(\mu^j_0)^Tw_{-j}x^T_{-j}]\|_\infty \\
 & + \|(\mu^j_0)^T\En[w_{-j}w^T_{-j}-\Gamma_{-j,-j}]\|_\infty\}\|\beta_{0,-j}-\hat\beta_{-j}\|_1\\
\end{array}
$$
We proceed to bound the five terms in the curly bracket. For the first term, under Condition A, by Lemma \ref{lem2}  we have with probability $1-\varepsilon-o(1)$% lem3a
$$
\begin{array}{rl}
\|\En[w_{j}z^T_{-j}]\|_\infty & \leq \|\En[w_{j}x^T_{-j}]\|_\infty+\|\En[w_{j}w^T_{-j}]\|_\infty\\
 &\leq \|\mbox{$\frac{1}{n}$}X^TW\|_\infty + \|\mbox{$\frac{1}{n}$}(W^TW-{\rm diag}(W^TW))\|_\infty \\
 & \leq C\sqrt{\frac{\log(p^2/\varepsilon)}{n}}
\end{array}$$

To bound the second term we use that $\Ep[(x_{j}- x_{-j}^T\mu^j_0)x^T_{-j}]=0$. Thus by Lemma \ref{lem:extra2} with $\theta^j=e^j-\mu^j_0$ we have
$$
\begin{array}{rl}
\|\En[(x_{j}- x_{-j}^T\mu^j_0)x^T_{-j}]\|_\infty & = \max_{j\leq p, k\neq j} |\En[(x_{j}- x_{-j}^T\mu^j_0)x_{k}]|\\
& \leq  C\sqrt{\log (p^2/\varepsilon)/n}
\end{array}$$ with probability $1-\varepsilon$ under $\log(pn) \leq \delta_n\sqrt{n}$ for $\varepsilon=n^{-1}$.

The third term can be controlled by a variant of Lemma \ref{lem2} (applied with $x_{j}-x_{-j}^T\mu^j_0$ instead of $x_j$, since $\|\mu^j_0\|\leq C$ and $\Ep[\{x_{j}-x_{-j}^T\mu^j_0\}^2]\leq \Ep[x_j^2]$), namely with probability $1-\varepsilon$
$$ \begin{array}{rl}
\|\En[(x_{j}-x_{-j}^T\mu^j_0)w^T_{-j}]\|_\infty & \leq \|\mbox{$\frac{1}{n}$}W^T(X_j-X_{-j}\mu^j_0)\|_\infty \\
 & \leq \max_{1\leq j\leq p}\|\mbox{$\frac{1}{n}$}(X_j-X_{-j}\mu^j_0)^TW\|_\infty \\
 & \leq C\sqrt{\frac{\log(p^3/\varepsilon)}{n}}
\end{array}$$
under Conditions A and C. We will take $\varepsilon = n^{-1}$.

Regarding the fourth term we have that by Lemma \ref{lem3a}, with probability $1-\varepsilon$
$$ \begin{array}{rl}
\|\En[(\mu^j_0)^Tw_{-j}x^T_{-j}]\|_\infty & \leq \max_{1\leq j,k\leq p} |\En[(\mu^j_0)^Tw_{-j}x_{k}]| \\
& \leq C \max_{j\in S}\|\mu^j_0\|\sigma_w\sqrt{\log(2p^2/\varepsilon)/n} \\
\end{array}$$
under Conditions A and C. We will take $\varepsilon = n^{-1}$.

Finally, the last term satisfies with probability $1-2\varepsilon$
$$ \begin{array}{rl}
\|(\mu^j_0)^T\En[w_{-j}w^T_{-j}-\Gamma_{-j,-j}]\|_\infty & \leq \|(\mu^j_0)^T\En[w_{-j}w^T_{-j}-{\rm Diag}(w_{-j}w^T_{-j})]\|_\infty\\
& +\|(\mu^j_0)^T\En[{\rm Diag}(w_{-j}w^T_{-j})-\Gamma_{-j,-j}]\|_\infty\\
& \leq C\max_{j\leq p}\|\mu^j_0\|\left\{ \delta''(\varepsilon/p) + \delta(\varepsilon)\right\}
\end{array}$$
where in the last step we used Lemma \ref{lem3a} and Lemma \ref{lem2}. We will take $\varepsilon = n^{-1}$.
\end{proof}

\section{Auxiliary Lemmas}

The following technical lemmas are modifications of results in \cite{BRT2014} and \cite{RT2} to allow random design and are stated here for completeness. In what follows,  for a square matrix $A$, we denote
by $\text{Diag}\{A\}$ the matrix with the same dimensions as $A$,
the same diagonal elements, and all off-diagonal elements
equal to zero.

\begin{lemma}\label{lem2} Let $0<\e<1$, $p\geq 2$, and assume Condition A holds. Then, with probability at least $1-\varepsilon$ (for each event),
\begin{eqnarray*}
&&\left\|\mbox{$\frac{1}{n}$} X^TW\right\|_\infty \le \delta(\varepsilon,p),\quad \left\| \mbox{$\frac{1}{n}$} X^T\xi\right\|_\infty \le \delta(\varepsilon,p),\quad \left\|\mbox{$\frac{1}{n}$} W^T\xi\right\|_\infty \le \delta(\varepsilon,p),\\
&& \left\|\mbox{$\frac{1}{n}$}(W^TW-\text{\rm Diag}\{W^TW\})\right\|_\infty \le \delta(\varepsilon,p),\\
&&\left\|\mbox{$\frac{1}{n}$} \text{\rm Diag}\{W^TW\}-\Gamma\right\|_\infty \le \delta(\varepsilon,p),\\
&& \left\|\mbox{$\frac{1}{n}$} \text{\rm Diag}\{X^TX\}-\Ep[\mbox{$\frac{1}{n}$} \text{\rm Diag}\{X^TX\}]\right\|_\infty \le \delta(\varepsilon,p)
\end{eqnarray*}
for
$$
\delta (\varepsilon,p) = \max\left(\gamma_0
\sqrt{\frac{2\log(2p^2/\varepsilon)}{n}},\
\frac{2\log(2p^2/\varepsilon)}{t_0n}\right),
$$
where $\gamma_0, t_0$ are positive constants depending only on $\sigma_\xi, \sigma_w, \sigma_x$.
\end{lemma}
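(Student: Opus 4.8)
The plan is to treat all six displayed bounds uniformly: each is the $\ell_\infty$-norm of a matrix (or vector) whose individual entries are empirical averages $\frac1n\sum_{i=1}^n V_i$ of i.i.d.\ mean-zero random variables $V_i$, where each $V_i$ is either a product of two of the subgaussian variables from Condition A or a centered square of one of them. Such products and centered squares are subexponential, so the strategy is to apply a Bernstein-type tail inequality to each scalar average and then take a union bound over the at most $p^2$ entries, choosing the deviation level so that the total failure probability is $\varepsilon$.

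First I would verify the mean-zero and subexponential structure entrywise. For $\frac1n X^TW$ the $(j,k)$ entry is $\frac1n\sum_i x_{ij}w_{ik}$, which has mean $\Ep[x_jw_k]=0$ by Condition A(iv); by Lemma \ref{lem:subgaussian} the product $x_{ij}w_{ik}$ is subexponential with constant controlled by $\sigma_x\sigma_w$. The entries of $\frac1nX^T\xi$ and $\frac1nW^T\xi$ are handled identically using $\Ep[x_j\xi]=0$ and $\Ep[w_j\xi]=0$, which hold under the zero-mean and independence structure of Condition A(iii)--(iv), with subexponential norms $\lesssim\sigma_x\sigma_\xi$ and $\sigma_w\sigma_\xi$. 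For the off-diagonal part of $\frac1nW^TW$ the relevant entries are $\frac1n\sum_i w_{ij}w_{ik}$ with $j\neq k$, whose mean $\Gamma_{jk}=0$ because $\Gamma$ is diagonal by A(v); here the subexponential norm is $\lesssim\sigma_w^2$. Finally $\frac1n{\rm Diag}\{W^TW\}-\Gamma$ and $\frac1n{\rm Diag}\{X^TX\}-\Ep[\frac1n{\rm Diag}\{X^TX\}]$ have entries $\frac1n\sum_i(w_{ij}^2-\Ep[w_{ij}^2])$ and $\frac1n\sum_i(x_{ij}^2-\Ep[x_{ij}^2])$, which are centered subexponential averages by the same lemma.

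Next I would apply the subexponential Bernstein inequality: for each entry, $\Pr{|\frac1n\sum_i V_i|>t}\le 2\exp(-c\,n\min(t^2/K^2,\,t/K))$, with $K$ the subexponential norm of $V_i$ (hence a function of $\sigma_x,\sigma_w,\sigma_\xi$ only) and $c$ an absolute constant. A union bound over the (at most) $p^2$ entries of a given display, with the factor $2$ coming from the two-sided Bernstein bound, gives total probability $\le 2p^2\exp(-c\,n\min(t^2/K^2,t/K))$. Setting this equal to $\varepsilon$ and inverting the $\min$ yields exactly the two-regime threshold $t=\delta(\varepsilon,p)=\max(\gamma_0\sqrt{2\log(2p^2/\varepsilon)/n},\,2\log(2p^2/\varepsilon)/(t_0n))$, where $\gamma_0,t_0$ absorb $K$ and $c$; the $\sqrt{\log/n}$ branch is the Gaussian (small-$t$) regime and the $\log/n$ branch the exponential (large-$t$) regime of the bound, and the $\max$ is precisely the threshold that controls both simultaneously.

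The routine concentration is not the difficulty; the main obstacle is the bookkeeping. One must (i) correctly identify the zero-mean structure in each of the six displays---in particular the diagonal/off-diagonal split for $W^TW$ and $X^TX$, which relies on $\Gamma$ being diagonal and on $\Ep[x_jw_k]=0$---and (ii) carry the two regimes of Bernstein's bound through the union bound so that a single threshold $\delta(\varepsilon,p)$ simultaneously controls all entries, with constants $\gamma_0,t_0$ depending only on the subgaussian parameters $\sigma_x,\sigma_w,\sigma_\xi$ and not on $n$ or $p$. Since the statement is a random-design modification of results in \cite{BRT2014} and \cite{RT2}, I would follow their argument, the only change being that the subexponential norms of the $V_i$ are now deterministic bounds coming from the subgaussianity of $x,w,\xi$ in Condition A rather than from a fixed design.
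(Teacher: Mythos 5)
Your proposal is correct and matches the intended argument: the paper gives no written proof for this lemma, deferring to the random-design modification of the corresponding results in \cite{BRT2014} and \cite{RT2}, and that argument is exactly the one you describe --- each entry is a mean-zero average of subexponential variables (products or centered squares of the subgaussian $x$, $w$, $\xi$), to which one applies the subexponential Bernstein inequality and a union bound over at most $p^2$ entries, the two branches of $\delta(\varepsilon,p)$ arising from the two regimes of the Bernstein exponent. Your entrywise bookkeeping of the zero-mean structure (diagonality of $\Gamma$ for the off-diagonal $W^TW$ block, $\Ep[x_jw_k]=0$, and orthogonality of $\xi$ to $x$ and $w$) is the right reading of Condition A and is consistent with how the paper uses these moments elsewhere.
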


\begin{lemma} \label{lem3a}
Let $0<\e<1$, $\theta^*\in \R^p$, $p\geq 2$, and assume that Condition A holds. Then, with probability at least $1-\varepsilon$,
\begin{align*}
&\left\|\mbox{$\frac{1}{n}$} X^TW\theta^*\right\|_{\infty} \leq \delta'(\e,p)\|\theta^*\|_2,
\end{align*}
where $\delta'(\e,p)= \max\left(\gamma_1
\sqrt{\frac{2\log(2p^2/\varepsilon)}{n}},\
\frac{2\log(2p^2/\varepsilon)}{t_1n}\right),
$
where $\gamma_1, t_1$ are positive constants depending only on $\sigma_w, \sigma_x$.
In addition, %$(\log (2p/\e))/n \le 1$ then,
with probability at least $1-\varepsilon$,
\begin{align*}
&\left\|\mbox{$\frac{1}{n}$} (W^TW-{\rm Diag}\{W^TW\})\theta^*\right\|_{\infty}\leq\delta''(\e,p)\|\theta^*\|_2,
\end{align*}
where %$\delta_1'(\e)= \sigma_{*} \sqrt{\frac{2m_2\log(2p/\e)}{n}}$ and
$
\delta''(\e,p) =\max\left(\gamma_2
\sqrt{\frac{2\log(2p/\varepsilon)}{n}},\
\frac{2\log(2p/\varepsilon)}{t_2n}\right),
$
and $\gamma_2, t_2$ are positive constants depending only on $\sigma_w$.
\end{lemma}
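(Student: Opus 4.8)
The plan is to prove both bounds coordinate-by-coordinate, exploiting that every entry of the two vectors in question is an i.i.d.\ average of mean-zero products of subgaussian random variables, and then to combine a Bernstein-type tail bound with a union bound over the $p$ coordinates. I would follow the same template as the proof of Lemma \ref{lem2}; the only genuinely new ingredient is that a fixed noise coordinate $w_{ik}$ is replaced by the linear form $w_i^T\theta^*$, whose subgaussian parameter scales like $\sigma_w\|\theta^*\|$ by Condition A(iv). The two regimes of the maximum defining $\delta'$ and $\delta''$ will come out of the Gaussian and exponential parts of the Bernstein bound in the usual way.

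For the first bound, fix $j$ and write the $j$th entry of $\frac{1}{n}X^TW\theta^*$ as $\frac{1}{n}\sum_{i=1}^n x_{ij}(w_i^T\theta^*)$. This summand has mean zero because $\Ep[x_{ij}(w_i^T\theta^*)]=\sum_k\theta^*_k\Ep[x_{ij}w_{ik}]=0$ under Condition A(iv). Both factors are subgaussian, $x_{ij}$ with parameter $\lesssim\sigma_x$ and $w_i^T\theta^*$ with parameter $\lesssim\sigma_w\|\theta^*\|$, so each summand is subexponential with $\psi_1$-norm $\lesssim\sigma_x\sigma_w\|\theta^*\|$ (the product of two subgaussians is subexponential, no independence needed). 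Bernstein's inequality then gives a tail of the form $2\exp(-cn\min(t^2/K^2,\,t/K))$ with $K\propto\sigma_x\sigma_w\|\theta^*\|$; a union bound over $j\le p$ and solving for $t$ at level $\varepsilon$ yields the claimed $\delta'(\varepsilon,p)\|\theta^*\|$. (The stated $\log(2p^2/\varepsilon)$ is a conservative over-bound of the $\log(2p/\varepsilon)$ one actually needs, chosen to reuse the constants of Lemma \ref{lem2}.)

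For the second bound, the decisive point is that subtracting $\mathrm{Diag}\{W^TW\}$ restores mean zero: the $j$th entry of $\frac{1}{n}(W^TW-\mathrm{Diag}\{W^TW\})\theta^*$ equals $\frac{1}{n}\sum_{i=1}^n w_{ij}(w_i^T\theta^*-w_{ij}\theta^*_j)$, whose expectation is $\theta^*_j\Gamma_{jj}-\theta^*_j\Gamma_{jj}=0$ precisely because $\Gamma=\mathrm{cov}(w)$ is diagonal (Condition A(v)), so $\Ep[w_{ij}w_{ik}]=0$ for $k\neq j$. Again each summand is the product of a $\sigma_w$-subgaussian factor and a $\lesssim\sigma_w\|\theta^*\|$-subgaussian factor, hence subexponential with parameter $\lesssim\sigma_w^2\|\theta^*\|$; Bernstein's inequality together with a union bound over the $p$ coordinates gives $\delta''(\varepsilon,p)\|\theta^*\|$, now carrying $\log(2p/\varepsilon)$ since only $p$ coordinates are involved.

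The crux, and the only place where any care is required, is establishing the product-of-subgaussians-is-subexponential estimate with the correct $\|\theta^*\|$ (rather than $\|\theta^*\|_1$) scaling, and verifying the mean-zero structure: this is immediate for the first bound via $\Ep[x_jw_k]=0$, but for the second bound it genuinely relies on the diagonality of $\Gamma$ together with the explicit removal of $\mathrm{Diag}\{W^TW\}$. Everything else is a routine two-regime Bernstein computation identical in spirit to Lemma \ref{lem2}.
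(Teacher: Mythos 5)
Your proof is correct and is essentially the argument the paper intends: the paper states Lemma \ref{lem3a} without proof as a modification of results in \cite{BRT2014} and \cite{RT2}, but your route (center each coordinate using $\Ep[x_jw_k]=0$ resp.\ the diagonality of $\Gamma$ after removing ${\rm Diag}\{W^TW\}$, observe that each summand is a product of a $\sigma_x$- or $\sigma_w$-subgaussian factor with a $\lesssim\sigma_w\|\theta^*\|$-subgaussian linear form and hence subexponential, then apply Bernstein plus a union bound over the $p$ coordinates) is exactly the template the paper itself uses for the analogous Lemma \ref{lem:extra2} via Proposition 5.16 of \cite{vershynin2010introduction}. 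Your observations about the $\|\theta^*\|_2$ (not $\|\theta^*\|_1$) scaling coming from the subgaussian-vector definition, and about $\log(2p^2/\varepsilon)$ being a conservative stand-in for $\log(2p/\varepsilon)$ in the first bound, are both accurate.
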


\begin{lemma}\label{lem:extra}
Let $0<\e<1$, $\theta^*\in \R^p$, $S\subset\{1,\ldots,p\}$, $|S|\geq 2$, and assume that Condition A holds. Then, with probability at least $1-\varepsilon$,
$$ \max_{j\in S} \left|\frac{1}{n}\sum_{i=1}^n (z_{ij}-z_{i,-j}^T\mu^j_0)z_{ij} - \Ep[(z_{ij}-z_{i,-j}^T\mu^j_0)z_{ij}] \right| \leq 4\delta(\e,|S|)\{1+\max_{j\in S}\|\mu^j_0\|\}$$
for
$$
\delta (\varepsilon,|S|) = \max\left(\gamma_3
\sqrt{\frac{2\log(2|S|^2/\varepsilon)}{n}},\
\frac{2\log(2|S|^2/\varepsilon)}{t_3n}\right),
$$
where $\gamma_3, t_3$ are positive constants depending only on $\sigma_w, \sigma_x$.
\end{lemma}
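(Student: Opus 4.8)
The plan is to recognize the summand as a product of two subgaussian random variables, show that it is subexponential, and then apply a Bernstein-type inequality together with a union bound over $j\in S$. First I would write the centered quantity for a fixed $j$ as $\En[U_j]-\Ep[U_j]$, where $U_{ij}:=(z_{ij}-z_{i,-j}^T\mu^j_0)z_{ij}=\{(e^j-\mu^j_0)^Tz_i\}\,z_{ij}$. Under Condition A the vector $z_i=x_i+w_i$ is subgaussian (being a sum of two independent subgaussian vectors with parameters $\sigma_x^2,\sigma_w^2$), so the linear form $(e^j-\mu^j_0)^Tz_i$ is subgaussian with $\psi_2$-norm at most $C(\sigma_x+\sigma_w)\|e^j-\mu^j_0\|\leq C(\sigma_x+\sigma_w)(1+\|\mu^j_0\|)$, while $z_{ij}$ is subgaussian with a bounded $\psi_2$-norm.

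Second, since the product of two subgaussian random variables is subexponential with $\psi_1$-norm controlled by the product of the two $\psi_2$-norms, each centered summand satisfies $\|U_{ij}-\Ep[U_{ij}]\|_{\psi_1}\leq K_j$ with $K_j\leq C'(1+\|\mu^j_0\|)$, where $C'$ depends only on $\sigma_w,\sigma_x$. Applying Bernstein's inequality to the i.i.d.\ mean-zero sum then gives, for each fixed $j$,
$$\Pr{|\En[U_j]-\Ep[U_j]|>t}\leq 2\exp\left(-c\,n\min\left(\frac{t^2}{K_j^2},\frac{t}{K_j}\right)\right),$$
which exhibits the two regimes — the sub-Gaussian $\sqrt{\log/n}$ regime and the heavier sub-exponential $\log/n$ regime — that produce the maximum in the definition of $\delta(\varepsilon,|S|)$.

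Third, I would union bound over the $|S|$ indices $j\in S$, setting the per-coordinate failure probability to $\varepsilon/|S|$; solving for $t$ yields $t\asymp K_j\max\big(\sqrt{\tfrac{\log(|S|/\varepsilon)}{n}},\tfrac{\log(|S|/\varepsilon)}{n}\big)$. Pulling out $(1+\max_{j\in S}\|\mu^j_0\|)$ from the bound on $K_j$, absorbing universal constants into $\gamma_3,t_3$ (which then depend only on $\sigma_w,\sigma_x$), and using $\log(|S|/\varepsilon)\leq\log(2|S|^2/\varepsilon)$ for notational alignment with Lemmas \ref{lem2} and \ref{lem3a}, produces exactly the stated bound $4\delta(\varepsilon,|S|)\{1+\max_{j\in S}\|\mu^j_0\|\}$.

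The main obstacle is bookkeeping the dependence on $\|\mu^j_0\|$ uniformly over $S$ so that it enters only through the clean factor $(1+\max_{j\in S}\|\mu^j_0\|)$, and checking that the cross terms in the expansion $z=x+w$ do not inflate the subexponential norm beyond this order. An equivalent route, closer in spirit to the cited results, is to expand $U_{ij}$ into its bilinear pieces in $(x,w)$ — terms of the form $x_\cdot x_\cdot$, $x_\cdot w_\cdot$, $w_\cdot x_\cdot$, and $w_\cdot w_\cdot$ — and control each centered average by the corresponding estimate in Lemma \ref{lem2} and Lemma \ref{lem3a}; this explains both the factor $4$ and the appearance of $|S|^2$ inside the logarithm, at the cost of a slightly longer decomposition.
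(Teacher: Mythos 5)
Your argument is correct, and your primary route is genuinely different from the one the paper takes. You treat $U_{ij}=\{(e^j-\mu^j_0)^Tz_i\}z_{ij}$ directly as a product of two subgaussian variables, hence subexponential with $\psi_1$-norm of order $(1+\|\mu^j_0\|)$ (the triangle inequality for the $\psi_2$-norm handles $z=x+w$ without needing independence of $x$ and $w$), and then apply Bernstein plus a union bound over $j\in S$; this is clean, gives the two regimes in $\delta(\e,|S|)$ in one shot, and absorbs all constants into $\gamma_3,t_3$. The paper instead follows exactly the ``equivalent route'' you sketch at the end: it expands $(z_{ij}-z_{i,-j}^T\mu^j_0)z_{ij}$ into seven bilinear pieces ($x_{ij}^2$, $w_{ij}^2$, $x_{ij}w_{ij}$, and four cross terms involving $\mu^j_0$), bounds each centered average by Lemma \ref{lem2} or Lemma \ref{lem3a}, and adds up the seven contributions --- which is precisely where the explicit factor $4$ and the $|S|^2$ inside the logarithm come from. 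What your direct argument buys is brevity and a single concentration step; what the paper's decomposition buys is reuse of already-proved lemmas and constants traceable to them. The only bookkeeping point worth making explicit in your version is that the union bound must also cover the $j$-dependence of the vector $\mu^j_0$ entering the linear form (the paper handles this by invoking Lemma \ref{lem3a} separately for each $j\in S$), but since you union bound over $j\in S$ at the level of the full summand $U_{ij}$ this is already taken care of.
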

\begin{proof}
Note that $(z_{ij}-z_{i,-j}^T\mu^j_0)z_{ij} = x_{ij}^2 + w_{ij}^2 +2x_{ij}w_{ij}-x_{ij}x_{i,-j}^T\mu^j_0-w_{-j}^T\mu^j_0x_j-w_jx_{-j}^T\mu^j_0-w_{-j}^T\mu^j_0w_j$ so that
$\Ep[(z_{ij}-z_{i,-j}^T\mu^j_0)z_{ij}]=\Ep[x_{ij}^2 + w_{ij}^2-x_{ij}x_{i,-j}^T\mu^j_0$.
Then using Lemma \ref{lem2} we have that
$$ \max_{j\in S} \left|\frac{1}{n}\sum_{i=1}^n (z_{ij}-z_{i,-j}^T\mu^j_0)z_{ij} - \Ep[(z_{ij}-z_{i,-j}^T\mu^j_0)z_{ij}] \right| \leq \sum_{k=1}^7 r_k
$$
where
$$
\begin{array}{ll}
 r_1 := \max_{j\in S}| \frac{1}{n}\sum_{i=1}^n x_{ij}^2-\Ep[x_{ij}^2]|  &  r_2 :=  \max_{j\in S}| \frac{1}{n}\sum_{i=1}^n w_{ij}^2-\Ep[w_{ij}^2]| \\
 r_3 :=  2\max_{j\in S}| \frac{1}{n}\sum_{i=1}^n w_{ij}x_{ij}|  &   r_4 := \max_{j\in S}| \frac{1}{n}\sum_{i=1}^n w_{ij}w_{i,-j}^T\mu^j_0| \\
   r_5:= \max_{j\in S}| \frac{1}{n}\sum_{i=1}^n x_{ij}w_{i,-j}^T\mu^j_0| &  r_6 := \max_{j\in S}| \frac{1}{n}\sum_{i=1}^n w_{ij}x_{i,-j}^T\mu^j_0| \\
\multicolumn{2}{l}{r_7 := \max_{j\in S}| \frac{1}{n}\sum_{i=1}^n x_{ij}x_{i,-j}^T\mu^j_0-\Ep[x_{ij}x_{i,-j}^T\mu^j_0]|} \\
  \end{array}
$$ By Lemma \ref{lem2} with $S$ (instead of all $p$ components) we have that with probability $1- 3\varepsilon$
$$ r_1 \leq \delta(\varepsilon,|S|), r_2 \leq \delta(\varepsilon,|S|), \ \ \mbox{and} \ \ r_3 \leq 2 \delta(\varepsilon,|S|).$$
Similarly, by Lemma \ref{lem3a} we have with probability $1-4\varepsilon$ (noting that each $j$ component is zero mean)
$$ \begin{array}{rl}
r_4 \leq \delta(\varepsilon,|S|)\max_{j\in S}\|\mu^j_0\|, & r_5 \leq \delta(\varepsilon,|S|)\max_{j\in S}\|\mu^j_0\|, \\
r_6 \leq \delta(\varepsilon,|S|)\max_{j\in S}\|\mu^j_0\|, & r_7 \leq \delta(\varepsilon,|S|)\max_{j\in S}\|\mu^j_0\|.\end{array}$$
Combining these bounds yields the result.
\end{proof}

\begin{lemma}\label{lem:extra2}
Under Condition A, let $0<\e<1$, $S\subset\{1,\ldots,p\}$, $|S|\geq 2$, and $\theta^j\in \R^p$, such that $\Ep[x^T\theta^jx_{-j}]=0$. Then, with probability at least $1-\varepsilon$,
$$ \max_{j\in S} \left\|\frac{1}{n}\sum_{i=1}^n x_i^T\theta^jx_{i,-j} \right\|_\infty \leq \delta(\e,p|S|)\max_{j\in S}\|\theta^j\|$$
for
$$
\delta (\varepsilon,p|S|) = \max\left(\gamma_4
\sqrt{\frac{2\log(2p|S|/\varepsilon)}{n}},\
\frac{2\log(2p|S|/\varepsilon)}{t_4n}\right),
$$
where $\gamma_4, t_4$ are positive constants depending only on $\sigma_x$.
\end{lemma}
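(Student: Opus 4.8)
The plan is to reduce this maximal inequality to a union bound over the at most $p|S|$ scalar averages $\frac1n\sum_{i=1}^n (x_i^T\theta^j)x_{ik}$, indexed by $j\in S$ and $k\neq j$, and to control each such average by a Bernstein-type tail bound for sums of independent mean-zero subexponential random variables. This mirrors the arguments behind Lemma \ref{lem2} and Lemma \ref{lem3a}, the only new feature being that both factors in the product are components of the same subgaussian design vector $x$ rather than one design and one noise factor.

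First I would fix $j\in S$ and $k\neq j$ and set $\zeta_i := (x_i^T\theta^j)x_{ik}$. The hypothesis $\Ep[x^T\theta^jx_{-j}]=0$ gives exactly $\Ep[\zeta_i]=0$ for every such pair (even though individual quadratic contributions such as $\theta^j_k x_{ik}^2$ need not be centered, the full $(-j)$-vector is centered by assumption), so $\frac1n\sum_{i=1}^n\zeta_i$ is an average of i.i.d. centered variables. By Condition A(ii) the vector $x_i$ is $\sigma_x$-subgaussian, hence $x_i^T\theta^j$ is subgaussian with parameter $\sigma_x\|\theta^j\|$ and $x_{ik}$ is subgaussian with parameter $\sigma_x$. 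Since a product of two subgaussian variables is subexponential with $\psi_1$-norm at most the product of their $\psi_2$-norms, the normalized variable $\zeta_i/\|\theta^j\|$ is subexponential with a parameter $K$ depending only on $\sigma_x$.

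Next I would apply Bernstein's inequality for subexponential summands to $\frac1n\sum_{i=1}^n \zeta_i/\|\theta^j\|$, obtaining $\P\bigl(|\frac1n\sum_{i=1}^n\zeta_i|>t\|\theta^j\|\bigr)\leq 2\exp\bigl(-c\,n\min(t^2/K^2,\,t/K)\bigr)$ for a universal $c$. Taking a union bound over the at most $p|S|$ pairs $(j,k)$ and bounding $\|\theta^j\|\leq \max_{j\in S}\|\theta^j\|$ shows that, with probability at least $1-\varepsilon$, the quantity $\max_{j\in S}\|\frac1n\sum_{i=1}^n x_i^T\theta^jx_{i,-j}\|_\infty$ is at most $t^\ast\max_{j\in S}\|\theta^j\|$, where $t^\ast$ solves $2p|S|\exp\bigl(-c\,n\min((t^\ast/K)^2,t^\ast/K)\bigr)=\varepsilon$. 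Solving the two regimes of this equation separately yields precisely the stated threshold $\delta(\varepsilon,p|S|)$, with $\gamma_4,t_4$ absorbing $K$ and $c$ and hence depending only on $\sigma_x$.

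The argument is entirely routine; the only step requiring a little care is the scaling in $\|\theta^j\|$. Because $\theta^j$ varies over $S$, one must normalize each summand by its own $\|\theta^j\|$ before invoking Bernstein, so that the subexponential parameter $K$ is uniform in $j$, and only afterwards replace $\|\theta^j\|$ by $\max_{j\in S}\|\theta^j\|$ in the final bound. No genuine obstacle arises beyond this bookkeeping and the standard verification that the product of two subgaussian factors is subexponential with the claimed parameter.
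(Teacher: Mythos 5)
Your proposal is correct and follows essentially the same route as the paper: the paper also reduces to the scalar averages $\frac{1}{n}\sum_i x_i^T\theta^j x_{ik}$, observes they are centered subexponential products of the subgaussians $x^T\theta^j$ and $x_k$, applies Bernstein's inequality for subexponential sums (Proposition 5.16 in Vershynin), and concludes by a union bound over the $p|S|$ pairs. Your remark about normalizing by $\|\theta^j\|$ before invoking Bernstein matches how the paper's tail bound is stated, so there is nothing to add.
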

\begin{proof}
For each $j\in S$ and $k \in [p]\setminus\{j\}$ we proceed to bound $|\En[x^T\theta^jx_{k}]$ and then apply the union bound.  By Condition A we have that $x^T\theta^j$ is subgaussian with variance parameter bounded by $\sigma_x^2\|\theta^j\|^2$ and $x_{ik}$ is subgaussian with variance parameter $\sigma_x^2$. Therefore we have that $(x_i^T\theta^jx_{ik})_{i=1}^n$ are independent zero mean subexponential random variable with parameter $\|\theta^j\|\sigma_x^2$. By Proposition 5.16 in \cite{vershynin2010introduction} we have $$P\left( \left|\sum_{i=1}^nx_i^T\theta^jx_{ik}\right|\geq t \right)\leq 2\exp(-c\min(t^2/\{n\|\theta^j\|^2\sigma_x^4\}, t/\{\|\theta^j\|\sigma_x^2\})) $$
So that setting $t=\max\left\{\sqrt{n\log(2|S|p/\e)}, \log(2|S|p/\e)\right\} \max_{j\in S}\|\theta^j\|\sigma_x^2(1+1/c)$ and applying the union bound yields the result.
\end{proof}

The following technical lemma is a concentration bound, see \cite{BRT2014} for a proof.

\begin{lemma}\label{lem:m2bound}
Let $X_i, i=1,\ldots,n,$ be independent random vectors in $\mathbb{R}^p$, $p\geq 3$. Define $\bar m_k := \max_{j\leq p}\frac{1}{n}\sum_{i=1}^n\mathbb{E}[|X_{ij}|^k]$ and $M_{k} \geq \mathbb{E}[ {\displaystyle \max_{i\leq n}}\|X_i\|_\infty^k]$. Then
{\small $$\mathbb{E}\left[\max_{j\leq p}\frac{1}{n}\left|\sum_{i=1}^n|X_{ij}|^k-\mathbb{E}[|X_{ij}|^k]\right|\right] \leq 2C^2 \frac{\log p}{n}M_{k}+2C\sqrt{\frac{\log p}{n}}M_{k}^{1/2}\bar m_k^{1/2} $$}
$$\mathbb{E}\left[\max_{j\leq p}\frac{1}{n}\sum_{i=1}^n|X_{ij}|^k\right] \leq CM_{k} n^{-1}\log p+ C\bar m_k $$
for some universal constant $C$.%\leq 12$.
\end{lemma}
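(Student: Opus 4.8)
The plan is to reduce both displayed bounds to a single symmetrization-plus-bootstrapping argument. Write $V_{ij} := |X_{ij}|^k \ge 0$ and abbreviate the two target quantities as
$$A := \mathbb{E}\Big[\max_{j\le p}\tfrac1n\Big|\sum_{i=1}^n (V_{ij}-\mathbb{E}V_{ij})\Big|\Big], \qquad B := \mathbb{E}\Big[\max_{j\le p}\tfrac1n\sum_{i=1}^n V_{ij}\Big].$$
By the basic symmetrization inequality (Lemma 2.3.1 in \cite{van1996weak}), applied to the finite index class $j\in\{1,\dots,p\}$ and using independence of the $X_i$, one has $A \le \tfrac{2}{n}\,\mathbb{E}\big[\max_{j\le p}|\sum_i \varepsilon_i V_{ij}|\big]$ for i.i.d.\ Rademacher $(\varepsilon_i)$. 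Conditionally on $(X_i)_{i=1}^n$, each $\sum_i \varepsilon_i V_{ij}$ is a sum of independent symmetric bounded variables, hence sub-Gaussian with proxy variance $\sum_i V_{ij}^2$ by Hoeffding; the sub-Gaussian maximal inequality over an index set of cardinality $p$ (e.g.\ Corollary 2.2.8 in \cite{vdV-W}) then yields
$$A \le \frac{C}{n}\sqrt{\log p}\;\mathbb{E}\Big[\max_{j\le p}\big(\sum_i V_{ij}^2\big)^{1/2}\Big].$$

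Next I would control the remaining expectation by a peeling step. Since $\sum_i V_{ij}^2 \le (\max_i V_{ij})\sum_i V_{ij} \le (\max_{i,j}V_{ij})\sum_i V_{ij}$, Cauchy--Schwarz gives
$$\mathbb{E}\Big[\max_{j\le p}\big(\sum_i V_{ij}^2\big)^{1/2}\Big] \le \big(\mathbb{E}\max_{i,j}V_{ij}\big)^{1/2}\big(\mathbb{E}\max_{j}\sum_i V_{ij}\big)^{1/2} \le M_k^{1/2}(nB)^{1/2},$$
where I use $\mathbb{E}\max_{i,j}V_{ij}=\mathbb{E}\max_i\|X_i\|_\infty^k \le M_k$ and $\mathbb{E}\max_j\sum_i V_{ij}=nB$. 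Combining with the previous display produces the first key relation $A \le C\sqrt{\log p/n}\,M_k^{1/2}B^{1/2}$, which is already the advertised first bound once $B$ is controlled.

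The second key relation is the triangle inequality $B \le A + \max_{j}\tfrac1n\sum_i \mathbb{E}V_{ij} = A + \bar m_k$. Substituting $A \le C\sqrt{\log p/n}\,M_k^{1/2}B^{1/2}$ turns this into a quadratic inequality in $B^{1/2}$, namely $B \le C\sqrt{\log p/n}\,M_k^{1/2}B^{1/2} + \bar m_k$, whose solution is $B^{1/2}\le C\sqrt{\log p/n}\,M_k^{1/2}+\bar m_k^{1/2}$ and hence $B \le 2C^2(\log p/n)M_k + 2\bar m_k$, the second claimed bound up to the universal constant. Feeding this estimate for $B$ back into the first key relation gives $A \le C^2(\log p/n)M_k + C\sqrt{\log p/n}\,M_k^{1/2}\bar m_k^{1/2}$, the first claimed bound.

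The routine ingredients are the symmetrization and the sub-Gaussian maximal inequality, both off-the-shelf; the step carrying the real content is the self-referential coupling between $A$ and $B$, which I close by the quadratic inequality. The main obstacle I anticipate is bookkeeping the constants through this bootstrap so that both displayed bounds emerge with a single universal $C$, together with verifying that the conditional sub-Gaussian proxy is exactly $\sum_i V_{ij}^2$ so that the $M_k$-weighted peeling bound is tight; the hypothesis $p\ge 3$ is needed only to keep $\log p$ bounded away from zero.
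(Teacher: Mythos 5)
Your argument is correct: the symmetrization step, the conditional Hoeffding/sub-Gaussian maximal bound with variance proxy $\sum_i V_{ij}^2$, the estimate $\max_j\sum_i V_{ij}^2\le(\max_{i,j}V_{ij})\max_j\sum_i V_{ij}$ followed by Cauchy--Schwarz, and the closing of the resulting quadratic inequality in $B^{1/2}$ all go through and deliver both displayed bounds with the stated constants. The paper itself offers no proof of this lemma --- it simply cites \cite{BRT2014} --- and the argument given there (and in the Chernozhukov--Chetverikov--Kato maximal-inequality lemmas it descends from) is essentially the one you reconstructed, so there is nothing further to reconcile.
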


%The following lemma is a version of a result in \cite{RudelsonVershynin2008}, see \cite{belloni2015uniformly} for a proof of a more general result.
%\begin{lemma}\label{thm:RV34}
%Let $(X_{i})$, $i=1,\ldots, n$, be independent (across i) random vectors such that $X_{i} \in \mathbb{R}^p$ with $p\geq 3$ and $(\Ep[ \max_{1\leq i\leq n}\|X_{i}\|_\infty^2])^{1/2} \leq K$. Furthermore, for $k\geq 1$, define
%$$
%\delta_n:= \frac{K \sqrt{k}}{\sqrt n}\left(\log^{1/2} p + (\log k) (\log^{1/2}p) (\log^{1/2} n) \right),
%$$
%%where $A$ is a universal constant.
%Then, there is a universal constant $C$ such that
%{\small $$
%\begin{array}{c}
%\Ep\left[ \sup_{\|\theta\|_0\leq k, \|\theta\| =1} \left| \frac{1}{n}\sum_{i=1}^n\{ (\theta^TX_i)^2 - \Ep[(\theta^TX_i)^2]\} \right|\right]\\ \leq C\delta_n^2 + C\delta_n \sup_{\|\theta\|_0\leq k, \|\theta\| =1} \sqrt{\frac{1}{n}\sum_{i=1}^n\Ep[(\theta^TX_i)^2]}
%\end{array}
%$$}
%\end{lemma}

The following is a direct consequence of Theorem 2 (and Remark 1) in \cite{koltchinskii2014asymptotics} and the union bound.

\begin{lemma}\label{thm:KLthm4}
Let $(X_{i})$, $i=1,\ldots, n$, be i.i.d. subgaussian random vectors such that $X_{i} \in \mathbb{R}^p$. For $S\subseteq \{1,\ldots,p\}$ let $\Sigma_S=\Ep[X_SX^T_S]$ and ${\rm r}(S) = \tr(\Sigma_S)/\|\Sigma_S\|_{op}$.
Then, there is a universal constant $C$ such that with probability $1-e^{-t}\binom{p}{k}$
{\small $$
\begin{array}{l}
 {\displaystyle \sup_{|S|\leq k, \|\theta\| =1}} \left| \frac{1}{n}\sum_{i=1}^n (\theta_S^TX_i)^2 - \theta_S^T\Sigma\theta_S \right|\\
 \leq C \max_{|S|\leq k}\left\{ \sqrt{\frac{{\rm r}(S)}{n}}\vee \frac{{\rm r}(S)}{n} \vee \sqrt{\frac{t}{n}}\vee \frac{t}{n}\right\} {\displaystyle\sup_{|S|\leq k, \|\theta\| =1} } \sqrt{\theta_S^T\Sigma\theta_S}
\end{array}
$$}
\end{lemma}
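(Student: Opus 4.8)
The plan is to derive the bound for a single, fixed support set from the Koltchinskii--Lounici concentration inequality for sample covariance operators, and then make it uniform over all $k$-sparse supports by a union bound over the $\binom{p}{k}$ possible supports.

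First I would rewrite the $k$-sparse supremum as a restricted operator norm. For a fixed $S\subseteq\{1,\ldots,p\}$, let $\hat\Sigma_S=\frac1n\sum_{i=1}^n X_{i,S}X_{i,S}^T$ denote the empirical covariance of the subvectors $X_{i,S}\in\R^{|S|}$. Every $\theta$ with $\|\theta\|=1$ and $\supp(\theta)\subseteq S$ corresponds to a unit vector $\vartheta\in\R^{|S|}$ with $\frac1n\sum_i(\theta_S^TX_i)^2=\vartheta^T\hat\Sigma_S\vartheta$ and $\theta_S^T\Sigma\theta_S=\vartheta^T\Sigma_S\vartheta$, so that
$$ \sup_{\|\theta\|=1,\ \supp(\theta)\subseteq S}\Big|\tfrac1n\sum_{i=1}^n(\theta_S^TX_i)^2-\theta_S^T\Sigma\theta_S\Big| = \|\hat\Sigma_S-\Sigma_S\|_{op}. $$
Moreover, since any unit vector with at most $k$ nonzero coordinates has its support contained in some set of size exactly $k$, the full $k$-sparse supremum equals $\max_{|S|=k}\|\hat\Sigma_S-\Sigma_S\|_{op}$, reducing the problem to controlling $\|\hat\Sigma_S-\Sigma_S\|_{op}$ for each individual $S$ and then maximizing.

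Next, because $(X_{i,S})_{i=1}^n$ are i.i.d.\ subgaussian vectors in $\R^{|S|}$ with covariance $\Sigma_S$, Theorem~2 together with Remark~1 of \cite{koltchinskii2014asymptotics} applies directly: for each fixed $S$, with probability at least $1-e^{-t}$,
$$ \|\hat\Sigma_S-\Sigma_S\|_{op}\ \leq\ C\,\|\Sigma_S\|_{op}\,\max\Big\{\sqrt{\tfrac{{\rm r}(S)}{n}},\ \tfrac{{\rm r}(S)}{n},\ \sqrt{\tfrac{t}{n}},\ \tfrac{t}{n}\Big\}, $$
where the effective rank ${\rm r}(S)=\tr(\Sigma_S)/\|\Sigma_S\|_{op}$ is precisely the complexity parameter appearing in their result. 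I would then take a union bound over the $\binom{p}{k}$ subsets of cardinality $k$, applying the previous display with the same $t$ on each; the resulting failure probability is at most $e^{-t}\binom{p}{k}$, and on the complementary event the bound holds simultaneously for all $|S|\leq k$. Taking the maximum over $S$ on both sides and collecting the $S$-dependent complexity factor into a single $\max_{|S|\le k}\{\cdots\}$ then produces the stated inequality.

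The only substantive analytic ingredient is the Koltchinskii--Lounici inequality itself, which supplies the sharp effective-rank dependence for subgaussian sample covariance operators and is invoked as a black box; the remaining steps---identifying the sparse supremum with a restricted operator norm and performing the union bound---are routine, their only cost being the replacement of the confidence level $e^{-t}$ by $e^{-t}\binom{p}{k}$. The one place that needs care is the factorization of the right-hand side into the complexity maximum times the prefactor $\sup_{|S|\le k,\|\theta\|=1}\sqrt{\theta_S^T\Sigma\theta_S}$: I would pull $\|\Sigma_S\|_{op}$ out of each per-$S$ bound and, in the regime of bounded spectra in which the lemma is subsequently applied, bound it by the stated square-root prefactor up to an absolute constant, checking that the resulting product dominates every per-$S$ estimate.
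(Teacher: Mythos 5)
Your argument is exactly the paper's: the lemma appears there with no proof beyond the remark that it is ``a direct consequence of Theorem 2 (and Remark 1) in Koltchinskii--Lounici and the union bound,'' which is precisely the reduction to $\|\hat\Sigma_S-\Sigma_S\|_{op}$ for each fixed support, the per-$S$ application of their effective-rank concentration bound, and the union bound over the $\binom{p}{k}$ supports that you carry out. Your closing caveat is also well placed: the Koltchinskii--Lounici inequality carries the prefactor $\|\Sigma_S\|_{op}$ to the first power, whereas the lemma displays $\sup_{|S|\le k,\|\theta\|=1}\sqrt{\theta_S^T\Sigma\theta_S}=\max_{|S|\le k}\|\Sigma_S\|_{op}^{1/2}$, a mismatch that is absorbed into the constant only because Condition A(ii) keeps the spectrum of $\Sigma$ bounded above and below.
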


Next we collect well-known results of subgaussian random variables that are stated here for convenience.

\begin{lemma}[Technical Lemma for Subgaussian Random Variables]\label{lem:subgaussian}
(1) If $X$ is a centered subgaussian random variable with parameter $\gamma$, it follows that for any $k>0$
$$ \Ep[ |X|^k] \leq k2^{k/2}\Gamma( k/2 )\gamma^k$$
and for $k\geq 1$ we have $\{\Ep[ |X|^k]\}^{1/k} \leq C\gamma \sqrt{k}$ for some universal constant $C$.
(2) If $X_j, j=1,\ldots,N$, is a collection of centered subgaussian variables with parameter $\gamma$, then for $k\geq 1$ we have
$$ \Ep\left[ \max_{j\leq N}|X_j|^k \right] \leq \gamma^k 3^k\log^{k/2}( NC_k ) $$ for some constant $C_k$ that depends only on $k$.\\
\end{lemma}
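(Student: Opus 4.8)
The plan is to derive both parts from the sub-Gaussian tail bound together with the layer-cake (tail-integral) representation of absolute moments. First I would record the Chernoff consequence of the definition $\Ep[\exp(tX)]\le\exp(\gamma^2t^2/2)$: optimizing $e^{-ts}\Ep[\exp(tX)]$ over $t>0$ gives $P(X>s)\le\exp(-s^2/(2\gamma^2))$, and applying the same bound to $-X$ yields the two-sided estimate $P(|X|>s)\le 2\exp(-s^2/(2\gamma^2))$ for all $s\ge0$.

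For part (1), I would then write $\Ep[|X|^k]=\int_0^\infty k s^{k-1}P(|X|>s)\,ds$ and insert the tail bound to get $\Ep[|X|^k]\le 2k\int_0^\infty s^{k-1}\exp(-s^2/(2\gamma^2))\,ds$. The substitution $u=s^2/(2\gamma^2)$ converts the right-hand side into a Gamma integral and produces exactly $k\,2^{k/2}\Gamma(k/2)\gamma^k$, the first claimed bound. For the normalized statement, I would take $k$-th roots and apply Stirling's bound $\Gamma(k/2)\le C'(k/2)^{k/2}$ together with $k^{1/k}\le e$; these give $\{k\,2^{k/2}\Gamma(k/2)\}^{1/k}\le C\sqrt{k}$ uniformly in $k\ge1$, hence $\{\Ep[|X|^k]\}^{1/k}\le C\gamma\sqrt{k}$.

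For part (2), I would apply the same tail-integral identity to $\max_{j\le N}|X_j|$, now bounding its tail by a union bound, $P(\max_{j\le N}|X_j|>s)\le\min\{1,\,2N\exp(-s^2/(2\gamma^2))\}$. Splitting $\Ep[\max_{j\le N}|X_j|^k]=\int_0^\infty k s^{k-1}P(\max_j|X_j|>s)\,ds$ at the threshold $s_0=\gamma\sqrt{2\log(NC_k)}$, I would bound the probability by $1$ on $[0,s_0]$, contributing $s_0^k=\gamma^k 2^{k/2}(\log(NC_k))^{k/2}$, and on $[s_0,\infty)$ run the substitution $u=s^2/(2\gamma^2)$ to express the remainder as an upper incomplete Gamma integral, $N k\,2^{k/2}\gamma^k\,\Gamma(k/2,\log(NC_k))$. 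Choosing $C_k$ depending only on $k$ (so that $\log(NC_k)\ge 2(k/2-1)$) lets me bound $\Gamma(k/2,x)\le 2x^{k/2-1}e^{-x}$ and shows the tail piece is at most a constant multiple of $s_0^k$; adding the two pieces and absorbing the factors $2^{k/2}$ and the numerical constants into $3^k$ gives $\gamma^k 3^k\log^{k/2}(NC_k)$.

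The moment integrals are explicit and routine; the only step requiring care is the tail piece in part (2). For large $k$ the polynomial weight $s^{k-1}$ competes with the Gaussian decay near $s_0$, so the incomplete-Gamma tail is smaller than $s_0^k$ only by a factor of order $(k/2)/\log(NC_k)$. Inserting the $k$-dependent constant $C_k$ inside the logarithm of the threshold is exactly what keeps this factor bounded even when $N$ is only moderately large relative to $k$; this is why the final bound carries $C_k$ rather than a universal constant inside the logarithm, while the generous factor $3^k$ leaves ample room to absorb the remaining numerical constants.
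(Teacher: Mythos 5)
Your proof is correct, but it follows a different route from the paper's for part (2), which is the only part the paper proves (part (1) is dismissed there as ``standard,'' so your explicit tail-integral derivation, which does reproduce the exact constant $k2^{k/2}\Gamma(k/2)\gamma^k$ after the substitution $u=s^2/(2\gamma^2)$, supplies an argument the paper omits). For the maximum, you integrate the layer-cake identity against the union-bounded tail $\min\{1,\,2N\exp(-s^2/(2\gamma^2))\}$, split at $s_0=\gamma\sqrt{2\log(NC_k)}$, and control the remainder by the incomplete-Gamma estimate $\Gamma(k/2,x)\le 2x^{k/2-1}e^{-x}$ valid for $x\ge 2(k/2-1)$; choosing $C_k\ge e^{k-2}\vee 2k$ makes the tail piece at most $s_0^k$ and the factor $2\cdot 2^{k/2}\le 3^k$ closes the bound. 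The paper instead runs an Orlicz/Jensen argument: it builds the convex, increasing function $\psi(t)=\max\{A_k,\exp(t^{2/k})\}+t-A_k$ with $\psi(0)=0$, uses $\psi(\Ep[\max_j |X_j|^k/(3\gamma)^k])\le N\max_j\Ep[\psi(|X_j|^k/(3\gamma)^k)]\le N\{2+k2^{k/2}\Gamma(k/2)/3^k\}$ via the $\psi_2$-moment bound $\Ep[\exp(|X_j|^2/(3\gamma)^2)]\le 2$, and then inverts $\psi$ to get $\log^{k/2}(NB_k+A_k)$. The two approaches buy different things: the paper's avoids any tail integration and transfers directly to general Orlicz norms in the van der Vaart--Wellner style, while yours is more elementary and self-contained, requiring only the Chernoff tail bound and calculus; the price is the care you correctly identify near the threshold, where the polynomial weight $s^{k-1}$ competes with the Gaussian decay and the $k$-dependent constant inside the logarithm is what keeps the incomplete-Gamma tail subordinate to $s_0^k$.
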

\begin{proof}
The first two results are standard characterizations of subgaussian random variables. For completeness we show part (2). Recall that if $X_j$ is $\gamma$-subgaussian then $\Ep[\exp(|X_j|^2/\{3\gamma^2\})] \leq 2$ and $\Ep[ |X|^k] \leq k 2^{k/2}\gamma^k\Gamma( k/2 )$ for all $j\leq N$. For $t\geq 0$, define $\psi(t) = \max\{ A_k, \exp(t^{2/k}) \}+t-A_k$ for $A_k=\exp(\frac{1}{2}k-1)$ so that $\psi$ is convex, $\psi(0)=0$, non-negative, and strictly increasing. In particular, $\psi(t)\leq \exp(t^{2/k}) + t$. Therefore we have
$$
\begin{array}{rl}
\psi(\Ep[ \max_{j\leq N}|X_j|^k/\{3\gamma\}^k ]) & \leq \Ep[ \psi(\max_{j\leq N}|X_j|^k/\{3\gamma\}^k) ]\\
  &= \Ep[ \max_{j\leq N}\psi(|X_j|^k/\{3\gamma\}^k) ] \\
& \leq N \max_{j\leq N}\Ep[\psi(|X_j|^k/\{3\gamma\}^k) ] \\
& \leq N \max_{j\leq N}\Ep[\exp(|X_j|^2/\{3\gamma\}^2)]\\
& +N \max_{j\leq N}\Ep[|X_j|^k/\{3\gamma\}^k] \\
& \leq N \{ 2 + k 2^{k/2}\Gamma( k/2 )/3^k\} =: N B_k\\
\end{array}
$$
Thus we have $\Ep[ \max_{j\leq N}|X_j|^k/\{3\gamma\}^k ] \leq \psi^{-1}(N B_k)$. To bound the inverse function note that
$$ NB_k = \psi(\psi^{-1}(N B_k)) = \max\{ A_k, \exp( \{ \psi^{-1}(N B_k) \}^{2/k} )\} + \psi^{-1}(N B_k) - A_k $$
so that $ NB_k + A_k \geq \max\{ A_k, \exp( \{ \psi^{-1}(N B_k) \}^{2/k} )\}$ since $\psi^{-1}(N B_k)\geq  0 $.  This implies that
$\psi^{-1}(N B_k)\leq \log^{k/2} (NB_k+A_k)$. Thus the result holds with $C_k=1+B_k+A_k$.
\end{proof}

\begin{lemma}\label{lem:auxmoment} Suppose that $X$ is a random variable such that $\Ep[X^2]=1$ and $\{\Ep[|X|^m]\}^{1/m} \leq B_m\gamma$ for $m\geq 2$. Then, for any $k\geq 3$ and $\delta \in (0,2]$ we have $\Ep[|X|^k] \leq \gamma^{k-2+\delta}B_{2(k-2+\delta)/\delta}^{k-2+\delta}$.
\end{lemma}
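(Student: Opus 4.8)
The plan is to bound the $k$-th moment by interpolating between the normalized second moment $\Ep[X^2]=1$ and a single higher-order moment, exploiting the log-convexity of the map $m\mapsto\log\Ep[|X|^m]$ (equivalently, a one-line application of H\"older's inequality). The hypothesis controls every moment of order $m\geq 2$, so the only real task is to choose the correct higher order $r$ so that the exponents collapse to the stated form.

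First I would set $r:=2(k-2+\delta)/\delta$ and look for an interpolation weight $\lambda\in[0,1]$ satisfying $k=2\lambda+r(1-\lambda)$, so that $|X|^k=|X|^{2\lambda}\,|X|^{r(1-\lambda)}$. Solving gives $\lambda=(r-k)/(r-2)$; substituting the identities $r-2=2(k-2)/\delta$ and $r-k=(k-2)(2-\delta)/\delta$ simplifies this to the clean value $\lambda=1-\delta/2$, hence $1-\lambda=\delta/2$. Because $\delta\in(0,2]$ this weight lies in $[0,1)$, so the interpolation is legitimate.

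Next I would apply H\"older's inequality with conjugate exponents $1/\lambda$ and $1/(1-\lambda)$ to the factorization above, obtaining
$$\Ep[|X|^k]\leq\bigl(\Ep[X^2]\bigr)^{\lambda}\bigl(\Ep[|X|^{r}]\bigr)^{1-\lambda}=\bigl(\Ep[|X|^{r}]\bigr)^{\delta/2},$$
where the final equality uses the normalization $\Ep[X^2]=1$ together with $1-\lambda=\delta/2$. Invoking the hypothesis $\{\Ep[|X|^{r}]\}^{1/r}\leq B_{r}\gamma$, i.e. $\Ep[|X|^{r}]\leq(B_{r}\gamma)^{r}$, then yields $\Ep[|X|^k]\leq(B_{r}\gamma)^{r\delta/2}$.

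Finally the exponent simplifies via $r\delta/2=k-2+\delta$, giving $\Ep[|X|^k]\leq\gamma^{k-2+\delta}B_{2(k-2+\delta)/\delta}^{k-2+\delta}$, which is exactly the claim. The only genuine content is the weight computation, and the main (minor) obstacle is verifying the two exponent simplifications $\lambda=1-\delta/2$ and $r\delta/2=k-2+\delta$; both hold identically for every $k\geq 3$ and $\delta\in(0,2]$, so no additional regularity assumptions or constant tracking are needed.
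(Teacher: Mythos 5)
Your proof is correct and is essentially the paper's own argument: your interpolation weight $\lambda=1-\delta/2$ reproduces exactly the paper's factorization $|X|^k=|X|^{2-\delta}\,|X|^{k-2+\delta}$ and the same H\"older application with exponents $2/(2-\delta)$ and $2/\delta$, followed by the identical moment bound at order $r=2(k-2+\delta)/\delta$. The only difference is presentational (you derive the split systematically rather than writing it down directly), and your checks that $\lambda\in[0,1)$ and $r\delta/2=k-2+\delta$ are exactly the verifications needed.
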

\begin{proof}
By applying Holder's inequality
$$ \Ep[|X|^k] = \Ep[|X|^{2-\delta}|X|^{k-2+\delta}]\leq \{\Ep[|X|^2]\}^{(2-\delta)/2} \ \ \{\Ep[ |X|^{2(k-2+\delta)/\delta}]\}^{\delta/2}$$
where $2(k-2+\delta)/\delta\geq 2$. The result follows since by assumption $\Ep[|X|^2]=1$ and
$$\{\Ep[ |X|^{2(k-2+\delta)/\delta}]\}^{\delta/2} \leq \{B_{2(k-2+\delta)/\delta}^{2(k-2+\delta)/\delta}\gamma^{2(k-2+\delta)/\delta}\}^{\delta/2} = B_{2(k-2+\delta)/\delta}^{k-2+\delta}\gamma^{k-2+\delta}.$$\end{proof}

\begin{lemma}\label{aux:barpsi}
Let $m_j := \sigma_j^{-1}\Sigma_j^{-1}(1+\|\beta_0\|)(1+\|\mu^j_0\|)$. Under Condition A, for any $t>0$ we have
$$ P( \max_{j\in S} |\bar\psi_j(y_1,z_1)| > t ) \leq 2|S|\exp( - tc_{w,\xi,x}/\max_{j\in S}m_j)$$
 where $c_{w,\xi,x}$ depends only on the subgaussian parameters of $w$, $\xi$, and $x$. Moreover, for any $k\geq 1$ we have $$ \Ep[|\bar \psi_j(y_1,z_1)|^k]^{1/k} \leq k C_{w,\xi,x} \sigma_j^{-1}\Sigma_j^{-1}\{1+\|\mu^j_0\|\}\{1+\|\beta_0\|\}  \ \ \mbox{and}$$
 $$\Ep[\max_{j\in S}|\bar \psi_j(y_1,z_1)|^k]^{1/k} \leq C_k\log (4|S|) \max_{j\in S}  \sigma_j^{-1}\Sigma_j^{-1}\{1+\|\mu^j_0\|\}\{1+\|\beta_0\|\}.$$ \end{lemma}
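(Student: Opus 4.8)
The plan is to exploit the explicit bilinear-plus-bias structure of the score. Writing $b:=\xi-w^T\beta_0$ and $a_j:=(e^j-\mu^j_0)^Tz=z_j-z_{-j}^T\mu^j_0$, and using $y-z^T\beta_0=b$ together with the identity for $\psi_j(y,z,\beta_{0j},\eta^j_0)$ derived in the proof of Theorem~\ref{thm:mainEst}, we have $\bar\psi_j(y_1,z_1)=-\sigma_j^{-1}\Sigma_j^{-1}(a_j b+d_j)$ with nonrandom bias $d_j:=(e^j-\mu^j_0)^T\Gamma\beta_0$. Under Condition~A the vector $z=x+w$ is subgaussian, so $a_j=(e^j-\mu^j_0)^Tx+(e^j-\mu^j_0)^Tw$ is centered subgaussian with parameter at most $C\|e^j-\mu^j_0\|\leq C(1+\|\mu^j_0\|)$, while $b=\xi-w^T\beta_0$ is centered subgaussian with parameter at most $C(1+\|\beta_0\|)$; the bias satisfies $|d_j|\leq\|e^j-\mu^j_0\|\,\|\Gamma\|_{op}\,\|\beta_0\|\leq C(1+\|\mu^j_0\|)(1+\|\beta_0\|)$ since $\|\Gamma\|_{op}\leq C$. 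The structural point I would use throughout is that $\bar\psi_j$ is a product of two subgaussian factors, hence \emph{subexponential}: its tail is exponential rather than Gaussian, which is precisely what produces the $\log|S|$ (rather than $\log^{1/2}|S|$) budget in the maximal bound.

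For the single-coordinate moment bound I would apply Cauchy--Schwarz, $\Ep[|a_jb|^k]\leq\Ep[|a_j|^{2k}]^{1/2}\Ep[|b|^{2k}]^{1/2}$, and then Lemma~\ref{lem:subgaussian}(1) to each factor, giving $\Ep[|a_j|^{2k}]^{1/(2k)}\leq C\sqrt{2k}(1+\|\mu^j_0\|)$ and the analogue for $b$; multiplying yields $\Ep[|a_jb|^k]^{1/k}\leq Ck(1+\|\mu^j_0\|)(1+\|\beta_0\|)$. Adding the bias by Minkowski's inequality, absorbing $|d_j|$ into the $k\geq1$ term, and multiplying by $\sigma_j^{-1}\Sigma_j^{-1}$ gives the second display, with $C_{w,\xi,x}$ collecting the subgaussian constants.

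The tail bound then follows by Markov applied to a high moment: $P(|\bar\psi_j|>t)\leq(Ckm_j/t)^k$, and choosing $k$ of order $t/(eCm_j)$ gives $P(|\bar\psi_j|>t)\leq 2\exp(-ct/m_j)$ for all $t>0$, the range of small $t$ being trivial because the right-hand side then exceeds $1$. A union bound over $j\in S$ together with $m_j\leq m_S:=\max_{j\in S}m_j$ yields the first display with $c_{w,\xi,x}=c$.

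Finally, for the maximal moment I would factor out $b$, which does not depend on $j$: $\max_{j\in S}|\bar\psi_j|\leq |b|\max_{j\in S}\sigma_j^{-1}\Sigma_j^{-1}|a_j|+\max_{j\in S}\sigma_j^{-1}\Sigma_j^{-1}|d_j|$. Since $\sigma_j^{-1}\Sigma_j^{-1}a_j$ is subgaussian with parameter at most $C\sigma_j^{-1}\Sigma_j^{-1}(1+\|\mu^j_0\|)$, Lemma~\ref{lem:subgaussian}(2) controls $\Ep[\max_{j\in S}|\sigma_j^{-1}\Sigma_j^{-1}a_j|^{2k}]^{1/(2k)}$ by $C\log^{1/2}(|S|)\max_{j\in S}\sigma_j^{-1}\Sigma_j^{-1}(1+\|\mu^j_0\|)$, and a further Cauchy--Schwarz against $\Ep[|b|^{2k}]^{1/(2k)}\leq C(1+\|\beta_0\|)$ gives $\Ep[\max_{j\in S}|\bar\psi_j|^k]^{1/k}\leq C_k\log^{1/2}(|S|)\,m_S$; the bias contributes only $\max_{j\in S}\sigma_j^{-1}\Sigma_j^{-1}|d_j|\leq m_S$, without a logarithm. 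This is stronger than, and hence implies, the claimed $\log(4|S|)$ bound; alternatively, integrating the exponential tail from the previous paragraph delivers the $\log$ bound directly. The only delicate point is bookkeeping: one must keep the subexponential (not subgaussian) nature of $\bar\psi_j$ in view so that the tail exponent is linear in $t$ with scale $m_j$, and carry the $\sigma_j^{-1}\Sigma_j^{-1}$ weights uniformly through the maximum so that they are absorbed into $m_S$. There is no substantive analytic obstacle beyond this.
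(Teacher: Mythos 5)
Your proposal is correct, and for the first two claims it is essentially the paper's own argument: the same decomposition $\bar\psi_j=-\sigma_j^{-1}\Sigma_j^{-1}\{(e^j-\mu^j_0)^Tz\,(\xi-w^T\beta_0)+(e^j-\mu^j_0)^T\Gamma\beta_0\}$, Cauchy--Schwarz plus Lemma \ref{lem:subgaussian}(1) for the $k$-th moment, and the observation that linear-in-$k$ moment growth makes $\bar\psi_j$ subexponential; the paper phrases the tail bound through the $\psi_1$-Orlicz norm where you optimize Markov's inequality over $k$, but these are the same standard equivalence. Where you genuinely diverge is the maximal moment bound. The paper treats $\max_{j\in S}|\bar\psi_j|$ directly as a maximum of $|S|$ subexponential variables and controls it via a truncated Orlicz function $\psi(t)=\max\{A_k,\exp(t^{1/k})\}+t-A_k$, Jensen's inequality, and inversion, which yields the stated $\log(4|S|)\,m_S$ bound. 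You instead factor out the $j$-independent subgaussian factor $b=\xi-w^T\beta_0$, apply the subgaussian maximal inequality of Lemma \ref{lem:subgaussian}(2) only to the weighted $a_j$'s (taking the maximum of the per-coordinate subgaussian parameters, which is legitimate), and recombine by Cauchy--Schwarz; this needs no independence between $b$ and the $a_j$'s and delivers the sharper rate $\log^{1/2}(|S|)\,m_S$, which implies the lemma's claim. Your approach buys a better logarithmic factor by exploiting the common-factor structure of the score, at the cost of being specific to this product form, whereas the paper's Orlicz argument is generic for any family of subexponential variables. Your fallback of integrating the exponential tail recovers exactly the paper's $\log(4|S|)$ rate. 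I see no gap in either route you describe.
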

\begin{proof}
Note that $\Ep[\bar\psi_j(y_1,z_1)]=0$ and $\Ep[\bar\psi_j^2(y_1,z_1) ]=1$. Moreover, for each $k\geq 1$  we have
$$\begin{array}{rl}
\Ep[|\bar \psi_j(y_1,z_1)|^k]^{1/k} & \leq \sigma_j^{-1}\Sigma_j^{-1}\Ep[|(e^j-\mu^j_0)^Tz_1|^{2k}]^{1/2k}\Ep[|\xi_1-w_1^T\beta_0|^{2k}]^{1/2k} \\
 & + |(e^j-\mu^j_0)^T\Gamma\beta_0| \\
& \leq \sigma_j^{-1}\Sigma_j^{-1}\{1+\|\mu^j_0\|\}\{1+\|\beta_0\|\}  C_{w,\xi,x} k
 \end{array}$$ since $\Ep[|(e^j-\mu^j_0)^Tz_1|^{2k}]^{1/2k}\leq C_z \sqrt{2k}(1+\|\mu^j_0\|)$ and $\Ep[|\xi_1-w_1^T\beta_0|^{2k}]^{1/2k}\leq C_{\xi,w}\sqrt{2k}(1+\|\beta_0\|)$ by the subgaussian assumption and Lemma \ref{lem:subgaussian}. Here $C_{w,\xi,x}$ depends only on the subgaussian parameters of $w$, $\xi$, and $x$.
Therefore, $\|\bar \psi_j(y_1,z_1)\|_{\psi_1}\leq \sigma_j^{-1}\Sigma_j^{-1}\{1+\|\mu^j_0\|\}\{1+\|\beta_0\|\}  C_{w,\xi,x}=m_jC_{w,\xi,x}$. In turn we have that for some universal constant $c>0$
$$ P(|\bar \psi_j(y_1,z_1)|>t)\leq 2\exp(-ct/m_jC_{w,\xi,x})$$
and the result follows from the union bound and setting $c_{w,\xi,x}=c/C_{w,\xi,x}$.

Next note that for $\psi(t) = \max\{ A_k, \exp(t^{1/k})\}+t-A_k$ where $A_k=\exp(k-1)$, we have $\psi(0)=0$, $\psi$ non-negative, convex, strictly increasing and $\psi(t)\leq \exp(t^{1/k})+t$. Moreover let $M$ be such that $\Ep[ \exp(|\bar \psi_j(y_1,z_1)|/M)]\leq 2$ where $M\geq 3 k C_{w,\xi,x}\sigma_j^{-1}\Sigma_j^{-1}(1+\|\mu^j_0)(1+\|\beta_0\|)$. Then
$$\begin{array}{rl}
\psi(\Ep[\max_{j\in S}|\bar \psi_j(y_1,z_1)|^k/M^k]) &\leq \Ep[\max_{j\in S} \psi(|\bar \psi_j(y_1,z_1)|^k/M^k)] \\
& \leq |S|\max_{j\in S} \Ep[\psi(|\bar \psi_j(y_1,z_1)|^k/M^k)]\\
& \leq |S|\max_{j\in S} \Ep[\exp(|\bar \psi_j(y_1,z_1)|/M)]\\
&+ |S|\max_{j\in S}\Ep[|\bar \psi_j(y_1,z_1)|^k/M^k]\\
& \leq 4|S|
\end{array}$$ where the last relation follows by definition of $M$.
Thus $$\Ep[\max_{j\in S}|\bar \psi_j(y_1,z_1)|^k] \leq M^k \psi^{-1}(|S|B_k).$$ To bound the inverse function we have
By definition we have $|S|B_k +A_k \geq \max\{A_k, \exp( |\psi^{-1}(|S|B_k)|^{1/k} )\}$ which implies
$$\psi^{-1}(|S|B_k) \leq \log^k(4|S|+A_k).$$
\end{proof}

Let $ ( W_i)_{i=1}^n$ be a sequence of independent copies of a random element $ W$  taking values in a measurable space $({\mathcal{W}}, \mathcal{A}_{{\mathcal{W}}})$ according to a probability law $P$. Let $\mathcal{F}$ be a set  of suitably measurable functions $f\colon {\mathcal{W}} \to \mathbb{R}$, equipped with a measurable envelope $F\colon \mathcal{W} \to \mathbb{R}$. Let $\Gn(f)=n^{-1/2}\sum_{i=1}^n f(W_i)-\Ep[f(W_i)]$.
%Let $M = \max_{1 \leq i \leq n} F(X_{i})$.

  \begin{lemma}[Maximal Inequality adapted from \cite{chernozhukov2012gaussian}]
\label{lemma:CCK}  Suppose that $F\geq \sup_{f \in \mathcal{F}}|f|$ is a measurable envelope for the finite class $\mathcal{F}$
with $\| F\|_{P,q} < \infty$ for some $q \geq 2$.  Let $M = \max_{i\leq n} F(W_i)$ and $\sigma^{2} > 0$ be any positive constant such that $\sup_{f \in |\mathcal{F}|}  \| f \|_{P,2}^{2} \leq \sigma^{2} \leq \| F \|_{P,2}^{2}$. Then
\begin{equation*}
\Ep_P [ \max_{f \in \mathcal{F}} |\Gn(f)| ] \leq K  \left( \sqrt{\sigma^{2} \log \left ( \frac{|\mathcal{F}|\| F \|_{P,2}}{\sigma} \right ) } + \frac{\| M \|_{P, 2}}{\sqrt{n}} \log \left ( |\mathcal{F}|  \frac{\| F \|_{P,2}}{\sigma} \right ) \right),
\end{equation*}
where $K$ is an absolute constant.  Moreover, for every $t \geq 1$, with probability $> 1-t^{-q/2}$,
\begin{multline*}
\max_{f \in \mathcal{F}} |\Gn(f)|  \leq (1+\alpha) \Ep_P [ \max_{f \in \mathcal{F}} |\Gn(f)|  ] \\
+ K(q) \Big [ (\sigma + n^{-1/2} \| M \|_{P,q}) \sqrt{t}
+  \alpha^{-1}  n^{-1/2} \| M \|_{P,2}t \Big ], \ \forall \alpha > 0,
\end{multline*}
where $K(q) > 0$ is a constant depending only on $q$.
\end{lemma}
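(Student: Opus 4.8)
The plan is to establish the two displayed inequalities separately: the first is a moment bound on $\Ep_P[\max_{f\in\mathcal{F}}|\Gn(f)|]$ and the second is a deviation bound for $\max_{f\in\mathcal{F}}|\Gn(f)|$ around that moment. For the expectation bound I would begin with symmetrization: since $\Gn(f)=n^{-1/2}\sum_{i=1}^n\{f(W_i)-\Ep[f(W_i)]\}$ is centered, the symmetrization inequality (Lemma 2.3.1 in \cite{van1996weak}) gives $\Ep_P[\max_{f}|\Gn(f)|]\leq 2\,\Ep[\max_{f}|n^{-1/2}\sum_{i=1}^n\varepsilon_i f(W_i)|]$ for i.i.d. Rademacher $(\varepsilon_i)_{i=1}^n$ independent of the data. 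Conditionally on $(W_i)_{i=1}^n$ the finite family $f\mapsto n^{-1/2}\sum_i\varepsilon_i f(W_i)$ consists of sub-Gaussian variables with proxy standard deviations $(n^{-1}\sum_i f(W_i)^2)^{1/2}$, so the maximal inequality for finitely many sub-Gaussian variables (Corollary 2.2.8 in \cite{vdV-W}) bounds the conditional mean by $C\sqrt{\log|\mathcal{F}|}\,\max_f(n^{-1}\sum_i f(W_i)^2)^{1/2}$. The residual task is to control $\Ep[\max_f n^{-1}\sum_i f(W_i)^2]$, which I would split as $\sigma^2+\Ep[\max_f|n^{-1}\sum_i f(W_i)^2-\Ep[f^2]|]$ and bound the fluctuation by a second symmetrization applied to $f^2$, using $n^{-1}\sum_i f(W_i)^4\leq M^2\,n^{-1}\sum_i f(W_i)^2$ to re-express the fourth-moment term through $M$ and the very quantity being bounded. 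This self-bounding (fixed-point) step closes the recursion and produces the $n^{-1/2}\|M\|_{P,2}\log(\cdot)$ contribution, while replacing the crude $\log|\mathcal{F}|$ by the covering-number estimate of $\mathcal{F}$ at resolution $\sigma$ relative to the envelope $F$ yields the sharper $\log(|\mathcal{F}|\|F\|_{P,2}/\sigma)$, exactly as in \cite{chernozhukov2012gaussian}.

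For the deviation bound I would invoke Talagrand's concentration inequality for the supremum of the centered empirical process in the Bousquet/Klein--Rio form, which controls $\max_f|\Gn(f)|$ around its mean by a Gaussian-type term of order $\sigma\sqrt{t}$ together with a sub-exponential term of order $\alpha^{-1}n^{-1/2}\|M\|_{P,2}\,t$; the free parameter $\alpha>0$ is precisely the one appearing in the $(1+\alpha)$ split of the mean. The complication is that $F$ is only $L^q$-integrable, so $\mathcal{F}$ is not uniformly bounded and Talagrand's inequality does not apply directly. I would resolve this by truncation: apply the concentration inequality to the class truncated at a level proportional to $n^{1/2}\|M\|_{P,q}\sqrt{t}$, and control the discarded mass through $M=\max_{i\leq n}F(W_i)$, whose $q$-th moment bound makes the event that some observation exceeds the truncation level have probability at most $t^{-q/2}$ by Markov's inequality. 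This produces the extra $n^{-1/2}\|M\|_{P,q}\sqrt{t}$ term and the stated failure probability $t^{-q/2}$.

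The main obstacle is exactly this heavy-tailed envelope. Without a uniform bound on $F$, neither the conditional sub-Gaussian maximal inequality nor Talagrand's concentration inequality applies off the shelf, and the self-bounding step in the expectation bound is what prevents the naive symmetrization of $f^2$ from regressing indefinitely. The truncation/overflow decomposition---bounding the truncated bounded part by the classical tools and the overflow by the $L^q$ moment of $\max_{i}F(W_i)$ via Markov's inequality---simultaneously yields the $\|M\|_{P,q}$ and $\|M\|_{P,2}$ dependence and the polynomial failure probability; tuning the truncation level against $\alpha$ and $t$ to obtain the stated constants $K$ and $K(q)$ is the delicate part, while all remaining steps are routine applications of symmetrization and the maximal inequalities cited above.
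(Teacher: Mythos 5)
The paper offers no proof of this lemma: it is imported verbatim from \cite{chernozhukov2012gaussian} (the CCK maximal and Fuk--Nagaev inequalities), so there is no in-paper argument to compare against. Measured against the source, your sketch of the expectation bound is essentially the standard route: symmetrization, the conditional sub-Gaussian maximal inequality over the finite class with scale $\max_f(n^{-1}\sum_i f(W_i)^2)^{1/2}$, and then the self-bounding/fixed-point step that controls $\Ep_P[\max_f n^{-1}\sum_i f(W_i)^2]$ via a second symmetrization and the contraction $n^{-1}\sum_i f(W_i)^4\leq M^2\, n^{-1}\sum_i f(W_i)^2$. That is exactly how the $\sqrt{\sigma^2\log(\cdot)}+n^{-1/2}\|M\|_{P,2}\log(\cdot)$ structure arises in CCK, and for a finite class the covering-number refinement you mention is cosmetic.

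For the deviation bound you depart from the source: CCK obtain it from the Boucheron--Bousquet--Lugosi--Massart $L^q$-moment inequalities for empirical processes followed by Markov's inequality on the $q$-th moment (which is where both the $t^{-q/2}$ failure probability and the $\|M\|_{P,q}$, $\|M\|_{P,2}$ norms appear automatically), not from Talagrand plus truncation. Your route is the classical one for Fuk--Nagaev inequalities and can be made to work, but as written it has a gap: after truncating at level $\lambda$ and restricting to the event $\{M\leq\lambda\}$, the empirical part of the overflow class vanishes, but the centering part does not --- one is left with the deterministic term $\sqrt{n}\,\sup_f|\Ep_P[f\,1\{F>\lambda\}]|\leq \sqrt{n}\,\Ep_P[F\,1\{F>\lambda\}]$, which a crude H\"older/Markov bound does not place below the stated right-hand side for the truncation level you propose. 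Handling it requires either a Hoffmann--J{\o}rgensen-type argument or an adaptive choice of $\lambda$; this is precisely the bookkeeping that the moment-inequality route sidesteps, and it is the one step of your outline that is not routine. Everything else (running Talagrand at confidence $1-t^{-q/2}$ so that the Gaussian term $\sigma\sqrt{(q/2)\log t}$ is dominated by $\sigma\sqrt{t}$, and converting the cross term into the $(1+\alpha)$ split with the $\alpha^{-1}n^{-1/2}\|M\|_{P,2}t$ remainder) is sound.
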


\end{appendix}

\bibliography{EIVbib}

\begin{thebibliography}{10}

\bibitem{BRT2014}
{A. Belloni}, {M. Rosenbaum}, and {A.~B. Tsybakov}.
\newblock Linear and conic programming approaches to high-dimensional
  errors-in-variables models.
\newblock {\em forthcoming at Journal of Royal Statistical Society, Series B},
  2014.

\bibitem{BCKRT2016a}
{A. Belloni}, {V. Chernozhukov}, {A. Kaul}, {M. Rosenbaum}, and {A.~B.
  Tsybakov}.
\newblock Pivotal estimation via self-normalization for high dimensional linear
  models with error-in-variables.
\newblock {\em Working paper}, 2016.

\bibitem{BellChenChernHans:nonGauss}
A.~Belloni, D.~Chen, V.~Chernozhukov, and C.~Hansen.
\newblock Sparse models and methods for optimal instruments with an application
  to eminent domain.
\newblock {\em Econometrica}, 80:2369--2429, 2012.
\newblock Arxiv, 2010.

\bibitem{BellChernHans:Gauss}
A.~Belloni, V.~Chernozhukov, and C.~Hansen.
\newblock Lasso methods for gaussian instrumental variables models.
\newblock 2010 arXiv:[math.ST], http://arxiv.org/abs/1012.1297, 2010.

\bibitem{BCH2011:InferenceGauss}
A.~Belloni, V.~Chernozhukov, and C.~Hansen.
\newblock Inference for high-dimensional sparse econometric models.
\newblock {\em Advances in Economics and Econometrics. 10th World Congress of
  Econometric Society. August 2010}, III:245--295, 2013.

\bibitem{BelloniChernozhukovHansen2011}
A.~Belloni, V.~Chernozhukov, and C.~Hansen.
\newblock Inference on treatment effects after selection amongst
  high-dimensional controls.
\newblock {\em Review of Economic Studies}, 81:608--650, 2014.

\bibitem{BRT2014b}
A.~Belloni, M.~Rosenbaum, and A.~B. Tsybakov.
\newblock An \{$L_1, L_2, L_\infty$\}-approach to high-dimensional
  errors-in-variables models.
\newblock {\em Electronic Journal of Statistics}, 10(2):1729--1750, 2016.

\bibitem{BCK-LAD}
Alexandre Belloni, Victor Chernozhukov, and Kengo Kato.
\newblock Uniform post selection inference for lad regression models.
\newblock {\em arXiv preprint arXiv:1304.0282}, 2013.

\bibitem{BCW-logistic}
Alexandre Belloni, Victor Chernozhukov, and Ying Wei.
\newblock Honest confidence regions for logistic regression with a large number
  of controls.
\newblock {\em forthcoming Journal of Business \& Statistics}, 2016.

\bibitem{BickelRitovTsybakov2009}
P.~J. Bickel, Y.~Ritov, and A.~B. Tsybakov.
\newblock Simultaneous analysis of {L}asso and {D}antzig selector.
\newblock {\em Annals of Statistics}, 37(4):1705--1732, 2009.

\bibitem{bickel:1982}
Peter~J Bickel.
\newblock On adaptive estimation.
\newblock {\em The Annals of Statistics}, pages 647--671, 1982.

\bibitem{carvalho2012high}
Carlos~M Carvalho, Jeffrey Chang, Joseph~E Lucas, Joseph~R Nevins, Quanli Wang,
  and Mike West.
\newblock High-dimensional sparse factor modeling: applications in gene
  expression genomics.
\newblock {\em Journal of the American Statistical Association}, 2012.

\bibitem{CChen}
Y.~Chen and C.~Caramanis.
\newblock Orthogonal matching pursuit with noisy and missing data: Low and
  high-dimensional results.
\newblock {\em arxiv:1206.0823}, 2012.

\bibitem{CChen1}
Y.~Chen and C.~Caramanis.
\newblock Noisy and missing data regression: Distribution-oblivious support
  recovery.
\newblock {\em Proc. of International Conference on Machine Learning (ICML)},
  2013.

\bibitem{chernozhukov2013gaussian}
Victor Chernozhukov, Denis Chetverikov, and Kengo Kato.
\newblock Gaussian approximations and multiplier bootstrap for maxima of sums
  of high-dimensional random vectors.
\newblock {\em The Annals of Statistics}, 41(6):2786--2819, 2013.

\bibitem{chernozhukov2014honestbands}
Victor Chernozhukov, Denis Chetverikov, and Kengo Kato.
\newblock Anti-concentration and honest, adaptive confidence bands.
\newblock {\em The Annals of Statistics}, 42(5):1787--1818, 2014.

\bibitem{chernozhukov2014clt}
Victor Chernozhukov, Denis Chetverikov, and Kengo Kato.
\newblock Central limit theorems and bootstrap in high dimensions.
\newblock {\em arXiv preprint}, 2014.

\bibitem{chernozhukov2012gaussian}
Victor Chernozhukov, Denis Chetverikov, and Kengo Kato.
\newblock Gaussian approximation of suprema of empirical processes.
\newblock {\em The Annals of Statistics}, 42(4):1564--1597, 2014.

\bibitem{chernozhukov2012comparison}
Victor Chernozhukov, Denis Chetverikov, and Kengo Kato.
\newblock Comparison and anti-concentration bounds for maxima of gaussian
  random vectors.
\newblock {\em Probability Theory and Related Fields}, 162:47--70, 2015.

\bibitem{chernozhukov2015noncenteredprocesses}
Victor Chernozhukov, Denis Chetverikov, and Kengo Kato.
\newblock Empirical and multiplier bootstraps for supreme of empirical
  processes of increasing complexity, and related gaussian couplings.
\newblock {\em arXiv preprint}, 2015.

\bibitem{cox1987parameter}
David~Roxbee Cox and Nancy Reid.
\newblock Parameter orthogonality and approximate conditional inference.
\newblock {\em Journal of the Royal Statistical Society. Series B
  (Methodological)}, pages 1--39, 1987.

\bibitem{fan2011sparse}
Jianqing Fan, Jinchi Lv, and Lei Qi.
\newblock Sparse high dimensional models in economics.
\newblock {\em Annual review of economics}, 3:291, 2011.

\bibitem{Fuller1987}
W.A. Fuller.
\newblock {\em Measurement Error Models}.
\newblock Wiley \& Sons, Inc. New York, 1987.

\bibitem{gautier2011high}
Eric Gautier and Alexandre Tsybakov.
\newblock High-dimensional instrumental variables regression and confidence
  sets.
\newblock {\em arXiv preprint arXiv:1105.2454}, 2011.

\bibitem{gautier2013pivotal}
Eric Gautier and Alexandre~B Tsybakov.
\newblock Pivotal estimation in high-dimensional regression via linear
  programming.
\newblock In {\em Empirical Inference}, pages 195--204. Springer, 2013.

\bibitem{hautsch2014financial}
Nikolaus Hautsch, Julia Schaumburg, and Melanie Schienle.
\newblock Financial network systemic risk contributions.
\newblock {\em Review of Finance}, page rfu010, 2014.

\bibitem{jankova2015confidence}
Jana Jankova, Sara van~de Geer, et~al.
\newblock Confidence intervals for high-dimensional inverse covariance
  estimation.
\newblock {\em Electronic Journal of Statistics}, 9(1):1205--1229, 2015.

\bibitem{javanmard2014confidence}
Adel Javanmard and Andrea Montanari.
\newblock Confidence intervals and hypothesis testing for high-dimensional
  regression.
\newblock {\em Journal of Machine Learning Research}, 15(1):2869--2909, 2014.

\bibitem{KK2015}
Abhishek Kaul and Hira~L Koul.
\newblock Weighted $\ell_1$-penalized corrected quantile regression for high
  dimensional measurement error models.
\newblock {\em Journal of Multivariate Analysis}, 140:72--91, 2015.

\bibitem{KKCL2016}
Abhishek Kaul, Hira~L Koul, Akshita Chawla, and Soumendra~N Lahiri.
\newblock Two stage non-penalized corrected least squares for high dimensional
  linear models with measurement error or missing covariates.
\newblock {\em arXiv preprint arXiv:1605.03154}, 2016.

\bibitem{koltchinskii2014asymptotics}
Vladimir Koltchinskii and Karim Lounici.
\newblock Asymptotics and concentration bounds for bilinear forms of spectral
  projectors of sample covariance.
\newblock {\em arXiv preprint arXiv:1408.4643}, 2014.

\bibitem{leeb:potscher:review}
Hannes Leeb and Benedikt~M. P{\"o}tscher.
\newblock Recent developments in model selection and related areas.
\newblock {\em Econometric Theory}, 24(2):319--322, 2008.

\bibitem{leeb:potscher:hodges}
Hannes Leeb and Benedikt~M. P{\"o}tscher.
\newblock Sparse estimators and the oracle property, or the return of {H}odges'
  estimator.
\newblock {\em J. Econometrics}, 142(1):201--211, 2008.

\bibitem{LL2009}
Hua Liang and Runze Li.
\newblock Variable selection for partially linear models with measurement
  errors.
\newblock {\em Journal of the American Statistical Association},
  104(485):234--248, 2009.

\bibitem{LW}
Po-Ling Loh and Martin~J. Wainwright.
\newblock High-dimensional regression with noisy and missing data: Provable
  guarantees with nonconvexity.
\newblock {\em The Annals of Statistics}, 40(3):1637--1664, 2012.

\bibitem{meinshausen2009lasso}
Nicolai Meinshausen and Bin Yu.
\newblock Lasso-type recovery of sparse representations for high-dimensional
  data.
\newblock {\em The Annals of Statistics}, pages 246--270, 2009.

\bibitem{monrad1991nearby}
Ditlev Monrad and Walter Philipp.
\newblock Nearby variables with nearby conditional laws and a strong
  approximation theorem for hilbert space valued martingales.
\newblock {\em Probability Theory and Related Fields}, 88(3):381--404, 1991.

\bibitem{newey90}
Whitney~K. Newey.
\newblock Semiparametric efficiency bounds.
\newblock {\em Journal of Applied Econometrics}, 5(2):99--135, 1990.

\bibitem{newey94}
Whitney~K. Newey.
\newblock The asymptotic variance of semiparametric estimators.
\newblock {\em Econometrica}, 62(6):1349--1382, 1994.

\bibitem{Neyman1979}
J.~Neyman.
\newblock $c(\alpha)$ tests and their use.
\newblock {\em Sankhya}, 41:1--21, 1979.

\bibitem{neyman1965use}
Jerzy Neyman and Elizabeth~L Scott.
\newblock {\em On the use of C (a) optimal tests of composite hypotheses}.
\newblock 1965.

\bibitem{CRSC2006}
L.~A.~Stefanski R.~J.~Carroll, D.~Ruppert and C.~Crainiceanu.
\newblock {\em Measurement Error in Nonlinear Models: A Modern Perspective}.
\newblock Chapman \& Hall, New York, 2006.

\bibitem{RT1}
Mathieu Rosenbaum, Alexandre~B Tsybakov, et~al.
\newblock Sparse recovery under matrix uncertainty.
\newblock {\em The Annals of Statistics}, 38(5):2620--2651, 2010.

\bibitem{RT2}
Mathieu Rosenbaum, Alexandre~B Tsybakov, et~al.
\newblock Improved matrix uncertainty selector.
\newblock In {\em From Probability to Statistics and Back: High-Dimensional
  Models and Processes--A Festschrift in Honor of Jon A. Wellner}, pages
  276--290. Institute of Mathematical Statistics, 2013.

\bibitem{RZ2015}
Mark Rudelson and Shuheng Zhou.
\newblock High dimensional errors-in-variables models with dependent
  measurements.
\newblock {\em arXiv preprint arXiv:1502.02355}, 2015.

\bibitem{SFT2}
{\O}ystein S{\o}rensen, Arnoldo Frigessi, and Magne Thoresen.
\newblock Covariate selection in high-dimensional generalized linear models
  with measurement error.
\newblock {\em arXiv preprint arXiv:1407.1070}, 2014.

\bibitem{GRD2014}
Sara Van~de Geer, Peter B{\"u}hlmann, Ya’acov Ritov, and Ruben Dezeure.
\newblock On asymptotically optimal confidence regions and tests for
  high-dimensional models.
\newblock {\em The Annals of Statistics}, 42(3):1166--1202, 2014.

\bibitem{vdV-W}
A.\~W.\ van~der Vaart and J.\~A.\ Wellner.
\newblock {\em Weak Convergence and Empirical Processes}.
\newblock Springer Series in Statistics, 1996.

\bibitem{van1996weak}
Aad~W Van Der~Vaart and Jon~A Wellner.
\newblock Weak convergence.
\newblock In {\em Weak Convergence and Empirical Processes}, pages 16--28.
  Springer, 1996.

\bibitem{vershynin2010introduction}
Roman Vershynin.
\newblock Introduction to the non-asymptotic analysis of random matrices.
\newblock {\em arXiv preprint arXiv:1011.3027}, 2010.

\bibitem{yuan2010high}
Ming Yuan.
\newblock High dimensional inverse covariance matrix estimation via linear
  programming.
\newblock {\em Journal of Machine Learning Research}, 11(Aug):2261--2286, 2010.

\bibitem{zhang2014confidence}
Cun-Hui Zhang and Stephanie~S Zhang.
\newblock Confidence intervals for low dimensional parameters in high
  dimensional linear models.
\newblock {\em Journal of the Royal Statistical Society: Series B (Statistical
  Methodology)}, 76(1):217--242, 2014.

\end{thebibliography}
\bibliographystyle{plain}

\end{document}